\newcommand{\klockan}{\the\hours:{\ifnum\minutes<10 0\fi}\the\minutes}
\newcommand{\tid}{\today\ \klockan}
\newcommand{\prtid}{\smash{\raise 10mm \hbox{\LaTeX ed \tid}}}
\renewcommand{\prtid}{}
\def\sectionmark#1{} 
\def\subsectionmark#1{}
\newcommand{\sectnr}{\ifnum \c@secnumdepth >\z@
                 \thesection.\hskip 1em\relax \fi}
\def\@evenhead{\footnotesize\rm\thepage\hfil\leftmark\hfil\llap{\prtid}}
\def\@oddhead{\footnotesize\rm\rlap{\prtid}\hfil\rightmark\hfil\thepage}
\def\tableofcontents{\section*{Contents} 
 \@starttoc{toc}}
\def\@biblabel#1{#1.}
\let\Thebibliography=\thebibliography
\renewcommand{\thebibliography}[1]{\def\@mkboth##1##2{}\Thebibliography{#1}
\addcontentsline{toc}{section}{References}
\frenchspacing 
\setlength{\@topsep}{0pt}
\setlength{\itemsep}{0pt}%
\setlength{\parskip}{0pt plus 2pt}%
}
\def\mdots@{\mathinner.\nonscript\!.%
 \ifx\next,.\else\ifx\next;.\else\ifx\next..\else
 \nonscript\!\mathinner.\fi\fi\fi}
\let\ldots\mdots@
\let\cdots\mdots@
\let\dotso\mdots@
\let\dotsb\mdots@
\let\dotsm\mdots@
\let\dotsc\mdots@
\def\vdots{\vbox{\baselineskip2.8\p@ \lineskiplimit\z@
    \kern6\p@\hbox{.}\hbox{.}\hbox{.}\kern3\p@}}
\def\ddots{\mathinner{\mkern1mu\raise8.6\p@\vbox{\kern7\p@\hbox{.}}%
    \raise5.8\p@\hbox{.}\raise3\p@\hbox{.}\mkern1mu}}
\let\Enumerate=\enumerate
\renewcommand{\enumerate}{\Enumerate%
\setlength{\@topsep}{0pt}
\setlength{\itemsep}{0pt}%
\setlength{\parskip}{0pt plus 1pt}%
\renewcommand{\theenumi}{\textup{(\alph{enumi})}}%
\renewcommand{\labelenumi}{\theenumi}%
}
\let\endEnumerate=\endenumerate
\renewcommand{\endenumerate}{\endEnumerate\unskip}
\def\@seccntformat#1{\csname the#1\endcsname.\quad}
\newcommand{\authortitle}[2]{\author{#1}\title{#2}\markboth{#1}{#2}}
\newcommand{\auth}[2]{{#1, #2.}}
\newcommand{\art}[6]{{\sc #1, \rm #2, \it #3 \bf #4 \rm (#5), \mbox{#6}.}}
\newcommand{\arttoappearin}[3]{{\sc #1, \rm #2,  to appear in #3.}}
\newcommand{\artprep}[3]{{\sc #1, \rm #2, #3.}}
\newcommand{\book}[3]{{\sc #1, \it #2, \rm #3.}}
\newcommand{\AND}{{\rm and }}
\newcommand{\arXiv}[1]{{\tt \href{https://arxiv.org/abs/#1}{arXiv:#1}}}
\newtheoremstyle{descriptive}%
  {\topsep}   
  {\topsep}   
  {\rmfamily} 
  {}          
  {\bfseries} 
  {.}         
  { }         
  {}          
\newtheoremstyle{propositional}%
  {\topsep}   
  {\topsep}   
  {\itshape}  
  {}          
  {\bfseries} 
  {.}         
  { }         
  {}          
\theoremstyle{propositional}
\newtheorem{thm}{Theorem}[section]
\newtheorem{prop}[thm]{Proposition}
\newtheorem{lem}[thm]{Lemma}
\newtheorem{cor}[thm]{Corollary}
\theoremstyle{descriptive}
\newtheorem{remark}[thm]{Remark}
\renewenvironment{proof}[1][\proofname]{\par
  \pushQED{\qed}%
  \normalfont
  \trivlist
  \item[\hskip\labelsep
        \itshape
    #1\@addpunct{.}]\ignorespaces
}{%
  \popQED\endtrivlist\@endpefalse
}
\newcommand{\setm}{\setminus}
\renewcommand{\emptyset}{\varnothing}
\DeclareMathOperator{\diam}{diam}
\newcommand{\bdry}{\partial}
\newcommand{\bdy}{\bdry}
\newcommand{\simge}{\gtrsim}
\newcommand{\simle}{\lesssim}
\newcommand{\al}{\alpha}
\newcommand{\alp}{\alpha}
\newcommand{\de}{\delta}
\newcommand{\eps}{\varepsilon}
\newcommand{\La}{\Lambda}
\newcommand{\Om}{\Omega}
\newcommand{\s}{\sigma}
\newcommand{\p}{{$p\mspace{1mu}$}}
\newcommand{\binfty}{{\boldsymbol{\infty}}}
\newcommand{\R}{\mathbf{R}}
\newcommand{\Rn}{{\R^n}}
\newcommand{\limplus}{{\mathchoice{\vcenter{\hbox{$\scriptstyle +$}}}
  {\vcenter{\hbox{$\scriptstyle +$}}}
  {\vcenter{\hbox{$\scriptscriptstyle +$}}}
  {\vcenter{\hbox{$\scriptscriptstyle +$}}}
}}
\newcommand{\ka}{\kappa}
\newcommand{\nuhat}{{\hat{\nu}}}
\newcommand{\nuinv}{{\nu}^{-1}}
\newcommand{\nutilde}{{\tilde{\nu}}}
\newcommand{\Zhat}{\widehat{Z}}
\newcommand{\Bhat}{\widehat{B}}
\newcommand{\dhat}{\hat{d}}
\newcommand{\mhat}{\widehat{m}}
\newcommand{\rhohat}{\hat{\rho}}
\newcommand{\rhotilde}{\tilde{\rho}}
\newcommand{\dtilde}{\tilde{d}}
\newcommand{\bhat}{\hat{b}}
\newcommand{\Zt}{\widetilde{Z}}
\newcommand{\kahat}{\hat{\ka}}
\numberwithin{equation}{section}
\newcommand{\eqv}{\ensuremath{
\mathchoice{\quad \Longleftrightarrow \quad}{\Leftrightarrow}
                {\Leftrightarrow}{\Leftrightarrow}}}
\newcommand{\imp}{\ensuremath{
\mathchoice{\quad \Longrightarrow \quad}{\Rightarrow}
                {\Rightarrow}{\Rightarrow}}}
\newenvironment{ack}{\medskip{\it Acknowledgement.}}{}
\begin{document}

\authortitle{Anders Bj\"orn, Jana Bj\"orn, Riikka Korte, Sari Rogovin and Timo Takala}
{Preserving Besov (fractional Sobolev) energies
 under sphericalization and flattening}

\author{
Anders Bj\"orn \\
\it\small Department of Mathematics, Link\"oping University, SE-581 83 Link\"oping, Sweden\\
\it \small anders.bjorn@liu.se, ORCID\/\textup{:} 0000-0002-9677-8321
\\
\\
Jana Bj\"orn \\
\it\small Department of Mathematics, Link\"oping University, SE-581 83 Link\"oping, Sweden\\
\it \small jana.bjorn@liu.se, ORCID\/\textup{:} 0000-0002-1238-6751
\\
\\
Riikka Korte \\
\it\small Department of Mathematics and Systems Analysis, Aalto University, \\
\it\small PL 11100, FI-00076 Aalto, Finland\\
\it \small riikka.korte@aalto.fi, ORCID\/\textup{:} 0000-0002-6313-2233
\\
\\
Sari Rogovin \\
\it\small Department of Mathematics and Systems Analysis, Aalto University,\\
\it\small  PL 11100, FI-00076 Aalto, Finland\\
\it \small sari.rogovin@aalto.fi, ORCID\/\textup{:} 0009-0001-3713-637X
\\
\\
Timo Takala \\
\it\small Department of Mathematics and Systems Analysis, Aalto University, \\
\it\small PL 11100, FI-00076 Aalto, Finland\\
\it \small timo.i.takala@aalto.fi, ORCID\/\textup{:} 0009-0006-3376-9635
}

\date{Preliminary version, \today}
\date{}
\date{{To appear in} \emph{J. Reine Angew. Math.}}

\maketitle

\noindent{\small
{\bf Abstract}. 
We introduce a new sphericalization mapping for metric spaces that is applicable 
in very general situations, including  totally disconnected fractal type sets.
For an unbounded  complete metric space
which is uniformly perfect at a base point 
for large radii and    
equipped
with a doubling measure, we make a more specific construction based on the measure and
equip it with a weighted measure.
This mapping is then shown to preserve the doubling property of the measure and 
the Besov (fractional Sobolev) energy.
The corresponding results for flattening of bounded complete metric spaces are also
obtained. Finally, it is shown that for the composition of 
a sphericalization with a flattening, or
vice versa, the obtained space is biLipschitz equivalent with the original space
and the resulting measure is comparable to the original measure.
}

\medskip

\noindent {\small \emph{Key words and phrases}:
Besov energy,
doubling measure,
flattening,
fractional Sobolev energy,
metric space,
sphericalization.
}

\medskip

\noindent {\small \emph{Mathematics Subject Classification} (2020):
Primary:
30L10 
Secondary:  
31E05 
46E36. 
}



\section{Introduction}

The stereographic projection and its inverse between  the Riemann sphere
and the complex plane are very useful mappings in many situations. 
Similar mappings on metric spaces have been studied and used during the last 25 years.
Since the inverse of the stereographic projection ``sphericalizes'' the complex plane, such mappings
from an unbounded to a bounded metric space are called \emph{sphericalizations}.
In the other direction, from a bounded to an unbounded space, they are
called \emph{flattenings}.

These mappings (with suitably chosen parameters) can preserve the Dirichlet energy and the notion
of \p-harmonic functions, and thus make it possible to transform boundary value problems 
in  unbounded  domains to problems in bounded domains, see e.g.\ 
Bj\"orn--Bj\"orn--Li~\cite{BBLi} and 
Gibara--Korte--Shan\-mu\-ga\-lin\-gam~\cite{GibaraKorteShan}.
Natural spaces for \p-harmonic functions are the Sobolev spaces, 
whose traces on the boundary of sufficiently nice domains $\Omega$ 
are the Besov (fractional Sobolev) spaces $B^\theta_p(\bdy\Om)$,
see e.g.\ \cite{BBShyptrace}, \cite{JW84} and~\cite{MalyBesov}.
Besov spaces are also important for  
nonlocal (fractional) partial differential equations and in other situations.

In this paper we therefore study preservation of the Besov
(fractional Sobolev)
 energy
\begin{equation} \label{eq-Besov-seminorm-intro} 
[u]_{\theta,p}^{p}=
[u]_{\theta,p,Z,d,\nu}^{p}:=
\int_Z\int_Z \frac{|u(x)-u(y)|^p}{d(x,y)^{\theta p}} 
      \frac{d\nu(y)\, d\nu(x)}{\nu(B(x,d(x,y)))} 
\end{equation}
under suitably defined sphericalizations and flattenings in rather general
complete metric measure spaces $(Z,d,\nu)$.
In particular, our general results apply to 
closed uniformly perfect subsets of $\R^n$, including many fractals.

Roughly speaking, by a \emph{sphericalization} we mean a
topology-preserving change of metric which makes a complete
unbounded metric space into a bounded space whose completion adds
exactly one point (denoted by $\binfty$) 
which is the accumulation point of every unbounded set in the original space.
Similarly, a \emph{flattening} is a topology-preserving change
of metric 
which makes a punctured complete bounded metric space 
into an unbounded complete metric space such that the puncture
at the base point is sent to infinity.
See Section~\ref{sect-spher-fl-inv} for precise definitions.

The following are some of our main results.
In order to treat sphericalization and flattening simultaneously
we introduce the parameter 
$m_0=0$ for flattening and $m_0=1$ for sphericalizing.
The resulting space is denoted $(\Zhat,\dhat)$.
To be precise, a point $\binfty$ is added to the sphericalized space 
to make it complete,
while a base point $b$ is removed from the original space during flattening.

\begin{thm} \label{thm-intro-doubling+Besov}
Assume that $(Z,d)$
is a complete metric space that is
uniformly perfect at  a base point $b\in Z$
for radii $r\ge m_0$ and that the measure $\nu$ is doubling.
Let the deformed space $(\Zhat,\dhat,\nuhat)$ be defined 
using \eqref{eq-def-dhat-intro}--\eqref{eq-nuhat-intro} below.
\begin{enumerate}
\item \label{it-doubling}
In both cases, sphericalization and flattening, the transformed
measure $\nuhat$ is doubling.
\item \label{it-Besov}
Let $1\le p<\infty$ and $\theta>0$.
If the parameter $\s$ 
in \eqref{eq-rho-spher-flat-intro}--\eqref{eq-nuhat-intro}
satisfies $\s=p\theta$,
then for every measurable function $u$ on $Z$, 
the  Besov\/ \textup(fractional Sobolev\/\textup) energies~\eqref{eq-Besov-seminorm-intro}
with respect to $(Z,d,\nu)$ and $(\Zhat,\dhat,\nuhat)$
 are comparable,
i.e.\ 
\[
     C_1 [u]_{\theta,p,Z,d,\nu}^{p} \le  [u]_{\theta,p,\Zhat,\dhat,\nuhat}^{p} \le C_2[u]_{\theta,p,Z,d,\nu}^{p} 
\]
where the comparison constants $0 < C_1 \le C_2$
depend only on $\s=p\theta$, the doubling constant $C_\nu$ and
the uniform perfectness constant  $\ka$.
\end{enumerate}
\end{thm}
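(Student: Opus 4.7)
The plan is to reduce both parts to a small collection of pointwise comparisons between the deformed and the original data. As a first step I would read off from \eqref{eq-def-dhat-intro}--\eqref{eq-nuhat-intro} how $\dhat(x,y)$, $\rho$ and the new balls $\Bhat(x,r)$ relate to $d$, $\nu$ and the distance to $b$: one expects $\rho(x)$ to be controlled by a reciprocal power of $\nu(B(b,m_0+d(x,b)))$, $\dhat(x,y)$ to be comparable to $d(x,y)$ times a symmetric combination of $\rho(x)$ and $\rho(y)$, and $\Bhat(x,\dhat(x,y))$ to be comparable, as a set, to $B(x,d(x,y))$ on which $\rho$ is essentially constant. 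These statements split naturally into a \emph{same scale} regime, where $d(x,b)\simeq d(y,b)$, and a \emph{different scales} regime; they are routine in the first and require care in the second. The parameter $m_0$ allows all arguments to be written simultaneously for sphericalization and flattening.

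For part~\ref{it-doubling}, with such a ball comparison in hand, $\nuhat(\Bhat(x,r))$ is essentially $\rho(x)^\s$ times the $\nu$-mass of an associated $d$-ball, so doubling of $\nuhat$ at the single-scale level reduces immediately to doubling of $\nu$. Large $\dhat$-balls close to $\binfty$, respectively close to the removed point $b$ in the flattening case, receive separate treatment: I would decompose them into dyadic annuli centered at $b$ and use uniform perfectness for radii $r\ge m_0$ to guarantee that each relevant annulus carries a definite fraction of the mass. Combining this annular estimate with the single-scale case closes the doubling inequality.

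For part~\ref{it-Besov}, the idea is to rewrite the integrand of~\eqref{eq-Besov-seminorm-intro} for $(\Zhat,\dhat,\nuhat)$ pointwise in terms of the original data. The relation between $d\nuhat$ and $d\nu$ (a power of $\rho$ determined by $\s$), together with the expected comparison $\dhat(x,y)^{\theta p}\simeq d(x,y)^{\theta p}$ times a symmetric product of $\rho(x)$ and $\rho(y)$, and $\nuhat(\Bhat(x,\dhat(x,y)))\simeq\rho(x)^\s\nu(B(x,d(x,y)))$ from part~\ref{it-doubling}, makes the ratio of the new and old integrands a product of powers of $\rho(x)$ and $\rho(y)$. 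Symmetrizing in $x\leftrightarrow y$ -- using the doubling of $\nu$ to replace $\nu(B(x,d(x,y)))$ by $\nu(B(y,d(x,y)))$ and vice versa -- balances these exponents; the condition $\s=\theta p$ is precisely what makes them cancel, and Fubini then delivers the two-sided comparability of the Besov energies.

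The main obstacle, in both parts, is the different-scales regime. There the clean multiplicative structure of $\dhat$ and of $\Bhat(x,\dhat(x,y))$ is replaced by an asymmetric picture in which one of $\rho(x),\rho(y)$ dominates and the ball effectively absorbs one scale. I expect the identity $\s=\theta p$ still to produce the required cancellation in this regime, but verifying this -- together with the corresponding doubling estimate, which is where the uniform perfectness at $b$ enters most acutely -- will be the technical heart of the proof.
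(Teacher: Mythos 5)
Your outline follows the paper's proof in all essentials: doubling is obtained by comparing $\dhat$-balls of small radius with $d$-balls on which $\rho$ is essentially constant, and treating large radii via dyadic annuli around $b$ together with the reverse-doubling that uniform perfectness for radii $r\ge m_0$ supplies; the Besov comparison is reduced to the pointwise identity
$\rho(x)^\s\rho(y)^\s\,\bigl(\dhat(x,y)^{\theta p}\,\nuhat(\Bhat(x,\dhat(x,y)))\bigr)^{-1}\simeq \bigl(d(x,y)^{\theta p}\,\nu(B(x,d(x,y)))\bigr)^{-1}$,
split into the same-scale and different-scale regimes exactly as you propose. The different-scale case you defer does close as expected: writing $m(x)=d(x,b)+m_0$ and taking $m(y)\ge 2m(x)$, one has $\dhat(x,y)\simeq\nu(B(b,m(x)))^{-1/\s}=\rho(x)m(x)$ and $\nuhat(\Bhat(x,\dhat(x,y)))\simeq m(x)^{-\s}$ (both governed by the point nearer $b$), so the left-hand side collapses to $\rho(y)^\s=1/\bigl(m(y)^{\s}\nu(B(b,m(y)))\bigr)$, which matches the right-hand side since $d(x,y)\simeq m(y)$, $\nu(B(x,d(x,y)))\simeq\nu(B(b,m(y)))$ and $\s=p\theta$.
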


Also inversions are covered by our results, see Remark~\ref{rmk-inversions}.
Roughly speaking, an inversion is  a  topology-preserving change of metric 
between two punctured complete unbounded metric spaces
such  that the puncture at the base point
is sent to infinity while infinity is sent to a new puncture (denoted by $\binfty$).
See Section~\ref{sect-spher-fl-inv} for the precise definition.

In  Section~\ref{sect-duality} we end the paper by showing that
sphericalization and flattening are each other's inverses, up to 
a biLipschitz equivalence of the metric and the measure.

As far as we know, Bonk--Kleiner~\cite{BonkKleiner02} were the first 
to study sphericalization of general metric spaces. 
They used the sphericalized distance
\begin{equation}  \label{eq-def-dhat-BK}
  \dhat(x,y) := \inf \biggl\{ \sum_{j=1}^n \rho(x_j)\rho(x_{j-1})d(x_j,x_{j-1})\biggr\},
\end{equation}
where the infimum is taken over all chains $x=x_0,x_1,\dots,x_n=y$, and with the specific choice 
\[
    \rho(x)=\frac{1}{1+d(x,b)},
\]
where $b$ is a fixed base point.

Some years later, Buckley--Herron--Xie~\cite{BuckleyHerronXie} 
used~\eqref{eq-def-dhat-BK} with $\rho(x)=1/d(x,b)$, 
which led them to 
inversions and flattenings (both of which were called inversions 
in \cite{BuckleyHerronXie}).
Balogh--Buckley~\cite{BaloghBuckley}
used general sphericalizing/flattening functions 
and defined
the new metric by integrating and taking the infimum  
over rectifiable paths between two points.
This way they showed that the quasihyperbolic metrics corresponding 
to the 
spherical and the flat
metrics are biLipschitz equivalent.
Such an approach requires pathconnected spaces.

We have taken a different route, using the sum $\rho(x_j)+\rho(x_{j-1})$ 
in \eqref{eq-def-dhat-intro} below
rather than the product $\rho(x_j)\rho(x_{j-1})$ in~\eqref{eq-def-dhat-BK},
since the product definition does not work with all the parameters we need,
see Remark~\ref{rmk-other-approaches}.
Starting with a general nonincreasing \emph{metric density} function 
$\rho:[0,\infty) \to (0,\infty]$
and a complete metric space $Z=(Z,d)$ 
we define a deformed metric $\dhat$, by letting
\begin{equation}  \label{eq-def-dhat-intro}
  \dhat(x,y) := \inf \biggl\{ \sum_{j=1}^n (\rho(x_j)+\rho(x_{j-1}))d(x_j,x_{j-1}) \biggr\},
\end{equation}
where the infimum is taken over all chains $x=x_0,x_1,\dots,x_n=y$.
Unlike~\cite{BaloghBuckley}, which also considers general sphericalizing functions, 
we do not use any curves and therefore allow disconnected spaces.
Even in connected spaces, our transformations differ from~\cite{BaloghBuckley}.
For example, the completion of the sphericalized $\R$ in~\cite{BaloghBuckley} 
adds two points (corresponding to $\pm \infty$)
and is therefore not a sphericalization in our sense,
 while in our construction it only adds one point $\binfty$,
by the following result for general metric density functions $\rho$.

\begin{prop}       \label{prop-bounded-onepoint-intro}
\quad \begin{enumerate}
\item \label{in-a}
\textup{(Sphericalization)}
Let $(Z,d)$ be complete, unbounded and  uniformly  perfect at large scales at $b$
and $\rho(0)<\infty$. 
Then $(Z,\dhat)$ is bounded and its 
completion adds exactly one point, called $\binfty$, if and only if 
\[
\int_{1}^\infty \rho \,dt < \infty.
\]
\item \label{in-b}
\textup{(Flattening)}
Let $(Z,d)$ be complete, bounded and uniformly perfect at $b$.
Then
$(Z \setminus \{b\},\dhat)$ is unbounded and complete
if and only if 
\[
\int_{0}^{1} \rho \,dt = \infty.
\]
\end{enumerate}
\end{prop}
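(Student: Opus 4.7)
My plan is to reduce both parts to comparisons between the chain infimum in~\eqref{eq-def-dhat-intro} and one-dimensional integrals of $\rho$ along $(0,\infty)$, using only the monotonicity of $\rho$ on the analytic side and uniform perfectness to build near-optimal test chains on the geometric side.

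\textbf{Two basic estimates.} For any chain $x=x_0,\dots,x_n=y$ I would set $r_j=d(x_j,b)$. The triangle inequality gives $d(x_{j-1},x_j)\ge|r_j-r_{j-1}|$, monotonicity gives $\rho(x_{j-1})+\rho(x_j)\ge\rho(\min(r_{j-1},r_j))$, and then $\rho(\min(r_{j-1},r_j))\cdot|r_j-r_{j-1}|\ge\int_{\min(r_{j-1},r_j)}^{\max(r_{j-1},r_j)}\rho(t)\,dt$, so each term in~\eqref{eq-def-dhat-intro} dominates this integral. Since the intervals $[\min(r_{j-1},r_j),\max(r_{j-1},r_j)]$ cover $[\min(r_0,r_n),\max(r_0,r_n)]$, summing yields
\[
\dhat(x,y)\;\ge\;\int_{\min(d(x,b),d(y,b))}^{\max(d(x,b),d(y,b))}\rho(t)\,dt.
\]
For the matching upper bound I would invoke uniform perfectness at $b$ on the relevant scales to obtain a geometric chain $\{y_k\}$ with $d(y_k,b)\asymp\lambda^k$ (or $\lambda^{-k}$ in~\ref{in-b}) for some fixed $\lambda>1$. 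On it, each term is $(\rho(y_{k-1})+\rho(y_k))\,d(y_{k-1},y_k)\lesssim\rho(\lambda^{k-1})\lambda^k$, and a standard dyadic Riemann-sum comparison shows $\sum_k\rho(\lambda^{k-1})\lambda^k\asymp\int\rho\,dt$ on the corresponding range. An arbitrary point is joined to the chain at its own scale by a single jump of cost $\lesssim\rho(d(\cdot,b))\,d(\cdot,b)\lesssim\int_{d(\cdot,b)/2}^{d(\cdot,b)}\rho$, a quantity that vanishes as $d(\cdot,b)\to\infty$ (or $\to 0$) whenever the corresponding tail integral is finite.

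\textbf{Assembly of the two parts.} For~\ref{in-a}, $\int_1^\infty\rho<\infty$ makes the chain $b\to y_1\to\cdots\to y_K\to x$ uniformly short (the first link is controlled by $\rho(0)<\infty$), so $\diam(Z,\dhat)<\infty$, and the same jump-plus-tail estimate forces $\dhat(x_n,x_m)\lesssim\int_{R/2}^\infty\rho\to 0$ as $d(x_n,b),d(x_m,b)\ge R\to\infty$, so every sequence with $d(x_n,b)\to\infty$ is $\dhat$-Cauchy and all of them share a common limit, labelled $\binfty$. Conversely $\int_1^\infty\rho=\infty$ forces $\dhat(b,x)\ge\int_0^{d(x,b)}\rho\to\infty$. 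To see that only one point is added, I would use that $\rho(0)<\infty$ makes $(Z,d)\to(Z,\dhat)$ $2\rho(0)$-Lipschitz via the trivial one-step chain, while a short case distinction on whether an optimal chain leaves $B(b,2M)$ (in the "far" case the jump in/out of $B(b,2M)$ has $d$-length $\ge M\ge\tfrac12 d(x,y)$) shows that $d$ and $\dhat$ are biLipschitz on every $B(b,M)$; any $\dhat$-Cauchy sequence not escaping to infinity is then $d$-Cauchy and converges in $Z$ by completeness. Part~\ref{in-b} runs identically on $(0,1]$: the geometric chain $b=y_0,y_1,\dots$ with $d(y_k,b)\asymp\lambda^{-k}$ has $\dhat$-length comparable to $\int_0^1\rho$; the lower bound $\dhat(x,b')\ge\int_{d(x,b)}^{\diam Z/2}\rho$ (for a fixed $b'$ with $d(b',b)\asymp\diam Z$) gives unboundedness as $d(x,b)\to 0$ when the integral diverges, and the same biLipschitz-on-annuli comparison then forces completeness; in the convergent case $(y_k)$ itself is a non-convergent $\dhat$-Cauchy sequence.

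\textbf{Main obstacle.} The lower bound is essentially immediate from monotonicity, but both the matching upper bound and the biLipschitz-on-balls step bundle several routine but fussy sub-estimates (the Riemann-sum comparison, the inequality $\rho(R)R\lesssim\int_{R/2}^R\rho$, separate handling of first and last jumps, avoiding degeneration when $d(x,b)=d(y,b)$). I expect the most delicate point to be the estimate for the single long transversal jump that joins an arbitrary point to the geometric chain: this is precisely where the sum definition~\eqref{eq-def-dhat-intro} is better behaved than the product definition~\eqref{eq-def-dhat-BK}, and where uniform perfectness at the correct range of scales is used in its full strength.
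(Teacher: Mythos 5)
Your proposal is correct and follows essentially the same route as the paper: the lower bound $\dhat(x,y)\ge\int_{\min(|x|,|y|)}^{\max(|x|,|y|)}\rho\,dt$ by monotonicity of $\rho$ along an arbitrary chain is the paper's Lemma~\ref{lem-d(x,y)-with-int}, the matching upper bound via a geometric chain supplied by uniform perfectness is Lemma~\ref{lem-sari}, and the biLipschitz comparison of $d$ and $\dhat$ on bounded sets (resp.\ annuli) together with the Cauchy-sequence/mixing argument for the uniqueness of $\binfty$ is exactly how Propositions~\ref{prop-bounded-onepoint} and~\ref{prop-unbounded-complete} are assembled. The fussy sub-estimates you flag (the dyadic Riemann-sum comparison and the single transversal jump) are precisely the content of Lemma~\ref{lem-sari} and go through as you describe.
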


For most of our results, we assume that $Z$
 is uniformly perfect at the  base point {$b \in Z$}. 
This assumption is needed to guarantee that the distances in the deformed space 
are not larger than expected.
Propositions~\ref{prop-unif-at-infty} and~\ref{prop-unif-at-b'} show that uniform 
perfectness is also essentially preserved by our
general transformations.

In order to show the preservation of measure-theoretic properties,
as in Theorem~\ref{thm-intro-doubling+Besov},
we consider particular choices of the metric density function~$\rho$ 
in~\eqref{eq-def-dhat-intro}, namely
\begin{equation} \label{eq-rho-spher-flat-intro} 
    \rho(t) = \frac{1}{(t+m_0)\nu(B(b,t+m_0))^{1/\s}},
\end{equation}
with $m_0=0$ for flattening and $m_0=1$ for sphericalizing.
In both cases, $\s>0$ is a fixed but arbitrary
parameter and the deformed space $(\Zhat,\dhat)$
is equipped  with the measure $\nuhat$ defined by
\begin{equation} \label{eq-nuhat-intro}
d\nuhat=\rho^\s\, d\nu.
\end{equation}

The papers~\cite{BaloghBuckley}, \cite{BonkKleiner02} and~\cite{BuckleyHerronXie} 
studied purely metric notions, and did not consider any measure.
For the metric transformation~\eqref{eq-def-dhat-BK} considered in \cite{BonkKleiner02},
Wildrick~\cite[Proposition~6.13]{Wildrick} showed that if $(Z,d)$ is connected and
Ahlfors $Q$-regular (with respect to the $Q$-dimensional Hausdorff measure), 
then its sphericalization~$(\Zhat,\dhat)$ 
is also Ahlfors $Q$-regular.
Li--Shan\-mu\-ga\-lin\-gam~\cite{LiShan} considered more general measures 
in connection with sphericalization and flattening of metric spaces and showed that (under suitable conditions) these transformations preserve the doubling property, Ahlfors regularity and Poincar\'e inequalities. 
Further results in this direction were obtained by Durand-Cartagena--Li~\cite{DL1} and \cite{DL2}.

Using a weighted version of the measure 
from~\cite{LiShan}, Bj\"orn--Bj\"orn--Li~\cite{BBLi} showed that (again under suitable assumptions) sphericalization preserves \p-harmonic functions. 
This was then applied to  study the Dirichlet problem and boundary regularity for such functions on unbounded open sets.
All the above papers 
(\cite{BBLi}, \cite{DL1}, \cite{DL2}, \cite{LiShan}  and~\cite{Wildrick})
used the sphericalization and flattening functions from~\cite{BonkKleiner02} and~\cite{BuckleyHerronXie}. 

Gibara--Shan\-mu\-ga\-lin\-gam~\cite{GibaraShan} considered a sphericalization similar 
to the one in \cite{BaloghBuckley}, but with
 the sphericalizing function depending on the distance to the boundary 
of a uniform domain, instead of 
a single base point. They showed that uniformity is preserved in this type of sphericalization.
This same idea was further developed by 
Gibara--Korte--Shan\-mu\-ga\-lin\-gam~\cite{GibaraKorteShan}, where they also defined a new measure on the deformed space. 
They showed that the transformed measure is doubling and the transformed domain supports a Poincar\'e inequality, and used this to solve the \p-Dirichlet problem for 
Besov boundary data.

Most of these papers rely on pathconnectedness of the original space 
(or even stronger assumptions
such as Poincar\'e inequalities).
Due to the nonlocal nature of the Besov 
space this is not required here.
We instead assume the much weaker assumption of uniform perfectness at $b$
(for radii $r \ge m_0$).
In fact, many
fractal type sets are natural objects for our study. 
The smoothness exponent $\theta$ in the Besov
energy \eqref{eq-Besov-seminorm-intro} on 
such sets can even be greater than 1.
In doubling metric measure spaces supporting a certain Sobolev--Poincar\'e inequality,
such as $\R^n$, this would force functions in $B^\theta_p$ 
(defined by \eqref{eq-def-Besov-spc})
to be constant (see Theorem~1.6 and Corollary~1.8 in Kumagai--Shan\-mu\-ga\-lin\-gam--Shimizu~\cite{KSS25}).

At the same time, there are plenty of spaces (even rectifiably connected ones)
for which the Besov spaces $B^\theta_p$ are nontrivial for some $\theta>1$.
More precisely, letting 
\begin{alignat*}{2}
   \theta_p & = \sup\{\theta: 
   B^\theta_{p} \text{ contains nonconstant functions} \} &\quad \text{and}   \\
  s_p & = \sup\{\theta: 
   B^\theta_{p,\infty} \text{ contains nonconstant functions} \},
\end{alignat*}
where $B^\theta_{p,\infty}$ is the  Besov--Lipschitz (Korevaar--Schoen) space,
it follows from \eqref{eq-comp-B-KS} below that 
$\theta_p = s_p$.
For the Sierpi\'nski carpet,
Theorem~1.4 in Murugan--Shimizu~\cite{MurShi} shows that 
\[
s_p= \frac{d_{w,p}}{p}, \quad \text{where $d_{w,p}$ is the \p-walk dimension.}
\]
At the same time, $d_{w,p}>p$ by 
Theorem~2.27 in Shimizu~\cite{Shimizu} 
(or the more general
Theorem~9.8 in Kajino--Shimizu~\cite{KajShi}), and hence $\theta_p>1$
for the Sierpi\'nski carpet.
Similar observations for the Sierpi\'nski gasket follow
from Kajino--Shimizu~\cite[Theorem~5.26]{KajShiToh}
and~\cite[Theorem~9.13]{KajShi}.

Other examples of spaces carrying nontrivial Besov spaces with $\theta>1$ can be
obtained by a general snowflaking process, such as the von Koch snowflake.
(Note that on e.g.\ $\Rn$,
Besov spaces with $\theta >1$ are usually defined in a different way.
Here we ignore such Besov spaces, and only consider
Besov spaces defined as in~\eqref{eq-def-Besov-spc}.)
See Kumagai--Shan\-mu\-ga\-lin\-gam--Shimizu~\cite{KSS25} for recent 
results about potential theoretic implications 
of finite dimensionality of Besov spaces when $\theta>1$.

The walk dimension mentioned above
is an important concept in the differential calculus on fractal sets.
For $p=2$, it describes the scaling $\text{time}\simeq \text{space}^{\beta}$ for random walks or  diffusion processes,
based on Dirichlet forms.
In $\Rn$ the walk dimension $\beta=2$, but on fractal sets typically $\beta>2$.

The outline of the paper is as follows.
In Section~\ref{sect-prelim} we introduce some basic notions essential in this paper.
In particular uniform perfectness, doubling measures and the Besov (semi)norm
are defined here.
In Section~\ref{sect-general} we prove results about the metric $\dhat$ defined 
with a general metric density function $\rho$ in~\eqref{eq-def-dhat-intro}.
In particular we prove Proposition~\ref{prop-bounded-onepoint-intro}.
Here the measure does not play any role.

In Section~\ref{sect-both} we turn to the more specific choices of the metric density
function $\rho$ given by \eqref{eq-rho-spher-flat-intro} 
for the sphericalization and flattening. 
Using some additional notation, we are able to treat both cases simultaneously in
most of our estimates. 
These are mainly carried out in Sections~\ref{sect-both} (where uniform perfectness is not assumed) and~\ref{sect-with-uniform} (where uniform perfectness is made a standing assumption)
leading to the proof of Theorem~\ref{thm-intro-doubling+Besov}\ref{it-doubling} 
(= Theorem~\ref{thm-doubl-m(x)}).
In Section~\ref{sect-Besov-energy-both} we deduce 
Theorem~\ref{thm-intro-doubling+Besov}\ref{it-Besov} (= Theorem~\ref{besov-preserved-both}).
We   
end the paper, in  Section~\ref{sect-duality}, by showing that
if we flatten the sphericalized space, or sphericalize the flattened space,
then the resulting space is biLipschitz equivalent to the original space and the resulting
measure is comparable to the original measure.

\begin{ack}
AB and JB were supported by the Swedish Research Council, grants 2018-04106, 2020-04011 and 2022-04048,
and also by SVeFUM.
RK was supported by the Research Council of Finland grant 360184.
TT was supported by The Magnus Ehrnrooth foundation.
Part of the research was done while RK, SR and TT visited the Department of Mathematics at Link\"oping University. They wish to thank the university for kind hospitality.
We also thank the anonymous referee for a careful reading and useful suggestions.
\end{ack}

\section{Preliminaries}
\label{sect-prelim}

\emph{Throughout the paper we let $Z=(Z,d)$ be a complete metric space 
with $\diam Z > 0$.
We also fix a base point $b \in Z$.
Further standing assumptions are added at the beginning of 
various sections.
}

\medskip

We will use the following notation: 
\begin{equation} \label{eq-Rinfty}
    |x| = d(x,b)
\quad \text{and} \quad
R_\infty  = \sup_{x \in Z} |x|,
\end{equation}
and for balls, 
\[
   B(x,r)  = \{ y \in Z : d(x,y) < r \}
\quad \text{and} \quad
B_r = B(b,r).
\]

All balls considered in this paper are open.
It will be convenient to use this notation also with $r=0$, i.e.\
$B_0 = B(b,0) = \emptyset$.

Throughout the paper, we write $a \simle b$ and $b \simge a$ 
if there is an implicit comparison constant $C>0$ such that $a \le Cb$, 
and $a \simeq b$ if $a \simle b \simle a$.
The implicit comparison constants are allowed to depend on the 
fixed data.
We will carefully explain the dependence in each case.

\subsection{Sphericalizations, flattenings and inversions}
\label{sect-spher-fl-inv}

Our main aim in this paper is to study certain
sphericalizations and flattenings given by a gauge function
as in~\eqref{eq-def-dhat}, 
but we start  by defining what we mean by sphericalizations and flattenings in general.

By a \emph{sphericalization} we mean a
topology-preserving change of metric from an unbounded  complete space $(Z,d)$ to 
a bounded space $(Z,d')$ whose  completion adds
exactly one point $\binfty$,  
so that for any sequence $\{x_j\}_{j=1}^\infty$ in $Z$,

\begin{equation} \label{eq-binfty}
\lim_{j \to \infty}d'(x_j,\binfty) = 0 
\quad \textrm{if and only if} \quad
\lim_{j \to \infty} d(x_j,b) = \infty.
\end{equation}
The metric $d'$ extends directly to $Z \cup \{\binfty\}$.
With a slight abuse of terminology, 
we call also the completion $(Z \cup \{\binfty\},d')$ 
a \emph{sphericalization} of $(Z,d)$.

Next, let us continue to assume that $(Z,d)$ is a complete metric space and let
 $Z_0=Z \setm \{b\}$.
A  \emph{flattening} is a topology-preserving change
of metric from a bounded punctured  space $(Z_0,d)$ 
to  an unbounded complete space $(Z_0,d')$ 
(called a \emph{flattening} of $(Z,d)$)
such that for  any $b_0 \in Z_0$ and any sequence $\{x_j\}_{j=1}^\infty$ in $Z_0$,
\begin{equation} \label{eq-b}
\lim_{j \to \infty}d'(x_j,b_0)  = \infty
\quad \textrm{if and only if}\quad
\lim_{j \to \infty}d(x_j,b)  = 0. 
\end{equation}

Similarly, an \emph{inversion} is  a topology-preserving change
of metric from an unbounded punctured  
space $(Z_0,d)$ to an unbounded  space $(Z_0,d')$ 
whose  completion adds exactly one point $\binfty$
such that
\eqref{eq-binfty} and \eqref{eq-b} hold
for all
sequences in $Z_0$.

Note that $(Z,d)$ is assumed to be complete but not necessarily
proper (i.e.\ its closed bounded subsets need not be compact), 
in which case $(Z \cup \{\binfty\},d')$ will not be compact.
However, from Section~\ref{sect-both} onwards we assume that $(Z,d)$ is
equipped with a doubling measure $\nu$, which together with the completeness
implies that
$(Z,d)$ is proper, by e.g.\ Proposition~3.1 in Bj\"orn--Bj\"orn~\cite{BBbook}.
For the results in Section~\ref{sect-general} it is not even required
that $(Z,d)$ is separable.

\subsection{Uniform perfectness at a point}

The space $Z$ is \emph{uniformly perfect at $x$} 
if there is a constant $\ka>1$ such that
\begin{equation} \label{eq-unif-perfect}
  B(x,\ka r) \setm B(x,r) \ne \emptyset
\quad \text{whenever } B(x,r) \ne Z \text{ and } r>0.
\end{equation}  
Note that the condition $B(x,r) \ne Z$ in 
\eqref{eq-unif-perfect}
can equivalently be replaced by $B(x,\ka r) \ne Z$.
The space $Z$ is \emph{uniformly perfect} if it is uniformly
perfect at every $x$ with the same constant $\ka$.
See Heinonen~\cite[Chapter~11]{Heinonen} 
and Bj\"orn--Bj\"orn~\cite[Section~2]{BBbring}
for further discussion and history.
Note that $Z$ is uniformly perfect with any $\ka>1$ if it is connected.
We will also need the following notions, 
which may not have been considered before.

Let 
$m_0 \ge 0$.
Then $Z$ is \emph{uniformly perfect at $x$ 
for radii $r\ge m_0$} if there is a constant $\ka>1$ such that
\begin{equation*} 
  B(x,\ka r) \setm B(x,r) \ne \emptyset
\quad \text{whenever } B(x,r) \ne Z \text{ and } 0 \ne r \ge m_0.
\end{equation*}
Note that $Z$ is uniformly perfect at $x$ if and only if it is 
uniformly perfect at $x$  for radii $r\ge 0$.

We also say that $Z$ is \emph{uniformly perfect at large scales at $x$}, if it is unbounded and uniformly perfect at $x$ for radii $r\ge 1$.
It is easy to see that the constant $1$ in $r\ge1$ can equivalently be replaced by any other positive number, provided that $\ka$ is allowed to change.

\subsection{Doubling measures} 
\label{sect-doubl}

In this section, we will 
provide some preliminary estimates for a doubling
 measure under the assumption that the space is uniformly perfect 
at a fixed base point $b\in Z$.

A complete Borel regular measure $\nu$  is \emph{doubling}
if  there is a \emph{doubling constant} $C_\nu>1$ such that
\begin{equation*} 
0 < \nu(B(x,2r)) \le C_\nu \nu(B(x,r)) < \infty
\quad \text{for all $x \in Z$ and $r>0$}. 
\end{equation*}

The following simple lemma from Bj\"orn--Bj\"orn~\cite[Lemma~3.6]{BBbook} 
will be used several times.

\begin{lem} \label{lem-comp-close-balls}
Assume that $\nu$ is a doubling measure on $Z$.
Let $B=B(x,r)$ and $B'=B(x',r')$ be two balls such that
\[
d(x,x')\le ar
\quad \text{and} \quad  
\frac{r}{a}\le r'\le ar.
\]
Then $\nu(B) \simeq \nu(B')$ with comparison constants
depending only on $a \geq 1$ and $C_\nu$.
\end{lem}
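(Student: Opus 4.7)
The plan is to sandwich each of $B$ and $B'$ inside a dilate of the other ball whose dilation factor depends only on $a$, and then quote the standard iterated-doubling consequence that for every $z\in Z$, $r>0$ and $\lambda\ge 1$ one has
\[
\nu(B(z,\lambda r))\le C_\nu^{\lceil \log_2 \lambda\rceil+1}\,\nu(B(z,r)),
\]
which is obtained by applying the doubling inequality $\lceil \log_2\lambda\rceil$ times.

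First, I would bound $\nu(B')$ from above by $\nu(B)$. For any $y\in B'=B(x',r')$, the triangle inequality together with the hypotheses $d(x,x')\le ar$ and $r'\le ar$ gives
\[
d(y,x)\le d(y,x')+d(x,x')<r'+ar\le 2ar,
\]
so $B'\subset B(x,2ar)$. Iterated doubling then yields $\nu(B')\le \nu(B(x,2ar))\simle \nu(B)$, with implicit constant depending only on $a$ and $C_\nu$.

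For the reverse inequality, I would use the other half of the hypotheses, $r\le ar'$ and $d(x,x')\le ar\le a^2r'$. Then for any $y\in B=B(x,r)$,
\[
d(y,x')\le d(y,x)+d(x,x')<r+ar\le ar'+a^2r'=(a+a^2)r',
\]
so $B\subset B(x',(a+a^2)r')$, and doubling again gives $\nu(B)\simle \nu(B')$.

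There is no real obstacle here; the lemma is a routine consequence of the triangle inequality and iterated doubling. The only point to check is that the two dilation factors, $2a$ and $a+a^2$, depend solely on $a$, so the number of doubling steps, and hence the final comparison constant, depends only on $a$ and $C_\nu$, as claimed.
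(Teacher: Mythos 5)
Your proof is correct: the two inclusions $B'\subset B(x,2ar)$ and $B\subset B(x',(a+a^2)r')$ follow from the triangle inequality exactly as you write, and iterated doubling with a number of steps depending only on $a$ gives both comparisons. The paper itself gives no proof, quoting the lemma from Bj\"orn--Bj\"orn \cite[Lemma~3.6]{BBbook}, and your argument is the standard one behind that reference.
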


The following consequence of the doubling property and uniform perfectness is a (strong) type of reverse-doubling.
Recall from \eqref{eq-Rinfty}
that $R_\infty  = \sup_{x \in Z} |x|$.

\begin{lem} \label{lem-measure-growth}
Assume that $\nu$ is a doubling measure on $Z$
and that $Z$ is 
uniformly perfect at $b$ for radii $r\ge m_0$
with constant~$\kappa$.
Then there are constants $\alp>0$ and $\Lambda \ge 1$,
only depending on $C_\nu$ and $\ka$, such that
\begin{equation*} 
  \frac{\nu(B_r)}{\nu(B_R)} \le \Lambda \Bigl(\frac rR\Bigr)^\alp
  \quad \text{when } m_0 \le  r < R <  2R_\infty.
\end{equation*}
In particular $\nu(\{ b \}) = 0$ if $m_0=0$.
\end{lem}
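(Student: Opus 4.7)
The plan is to establish a one-step reverse-doubling inequality and then iterate. The central claim is that with $\lambda := 3\kappa$ there exists $\eta > 0$, depending only on $\kappa$ and $C_\nu$, such that
\[
\nu(B_{\lambda r}) \geq (1+\eta)\,\nu(B_r) \quad \text{whenever $r \geq m_0$ and $R_\infty > 2r$.}
\]
To prove this, I apply the uniform perfectness assumption at the radius $2r$ (admissible since $2r \geq m_0$ and $B_{2r} \neq Z$) to produce a point $y \in B(b, 2\kappa r) \setminus B_{2r}$. Then $|y| \geq 2r$ forces $B(y, r) \cap B_r = \emptyset$ (a common point would give $|y| < 2r$), while $B(y, r) \cup B_r \subset B_{3\kappa r} = B_{\lambda r}$. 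Lemma~\ref{lem-comp-close-balls}, with $a = 2\kappa$, yields $\nu(B(y, r)) \geq \nu(B_r)/C_1$ for some $C_1 = C_1(\kappa, C_\nu)$, so
\[
\nu(B_{\lambda r}) \geq \nu(B_r) + \nu(B(y, r)) \geq (1 + 1/C_1)\,\nu(B_r),
\]
giving the step with $\eta = 1/C_1$.

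Set $q := (1+\eta)^{-1} \in (0,1)$ and $\alpha := -\log_\lambda q > 0$. Given $m_0 \leq r < R < 2R_\infty$, I let $N \geq 0$ be the largest integer for which the $N$-fold iteration is valid, i.e.\ $\lambda^j r < R_\infty/2$ for $j = 0, \dots, N-1$ and $\lambda^N r \leq R$. Iterating the step inequality yields $\nu(B_r) \leq q^N \nu(B_{\lambda^N r}) \leq q^N \nu(B_R)$. The maximality of $N$ implies that either $\lambda^N r \geq R_\infty/2 > R/4$ (using $R < 2R_\infty$) or $\lambda^{N+1} r > R$, so in either case $\lambda^N \geq c R/r$ for a constant $c = c(\lambda)$; hence $q^N \leq \Lambda (r/R)^\alpha$ for a suitable $\Lambda = \Lambda(\kappa,C_\nu)$. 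The boundary situations (when $N=0$, meaning $r \geq R_\infty/2$ or $\lambda r > R$) are absorbed into the constant since then $r/R$ is bounded away from zero and $\nu(B_r)\leq\nu(B_R)$ is trivial.

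Finally, in the case $m_0 = 0$, applying the proven estimate with any fixed $R \in (0, 2R_\infty)$ shows $\nu(B_r) \to 0$ as $r \to 0^+$, so continuity of measure on the decreasing intersection $\{b\} = \bigcap_n B_{1/n}$ (whose enveloping ball has $\nu(B_R) < \infty$ by local finiteness of doubling measures) gives $\nu(\{b\}) = 0$. The main obstacle is a bookkeeping one: uniform perfectness must be invoked at the radius $2r$ rather than $r$, because only then does the resulting point $y$ lie far enough from $b$ (namely $|y| \geq 2r$) to ensure $B(y,r)$ is disjoint from $B_r$, which is what powers the strict measure increase.
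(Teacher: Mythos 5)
Your proof is correct and follows essentially the same route as the paper's: a one-step reverse-doubling inequality obtained from uniform perfectness together with Lemma~\ref{lem-comp-close-balls}, followed by geometric iteration. The differences are only cosmetic (you iterate upward from $r$ with factor $3\kappa$ while the paper iterates downward from $R$ with factor $4\kappa$ and treats the case $R_\infty\le 2m_0$ separately rather than absorbing it into the $N=0$ boundary case); you should just note explicitly that the case $r=0$, $m_0=0$ is trivial since $B_0=\emptyset$, so that the iteration index $N$ is well defined.
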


\begin{proof}
If $R_\infty \leq 2 m_0$, then
\begin{equation*}
\frac{\nu(B_r)}{\nu(B_R)}
\leq 1
\leq \frac{4r}{R},
\end{equation*}
so the claim holds with $\alpha = 1$ and $\Lambda = 4$.

Now assume that $R_\infty > 2 m_0$. 
The claim is trivial if $r=0$ since $B_0=\emptyset$, so we may assume that
$r>0$.
Let $t>0$ be such that
$m_0 < t < \tfrac12 R_\infty$.
By the uniform perfectness, there is $x \in B_{2\ka t} \setm B_{2t}$.
Then 
\[
  \nu(B_t) \le \nu(B(x,3 \ka t)) \simle \nu(B(x,t)).
\]
Thus
\begin{equation} \label{eq-M}
  \nu(B_{4\ka t}) \ge \nu(B_t) + \nu(B(x,t))
  \ge \Lambda \nu(B_t),
\end{equation}
where $\Lambda >1$.

For $j=0,1,\dots$\,, let $r_j=(4\ka)^{-j} R$ and find $n$ such that 
$r_{n+1} \le  r < r_{n}$.
Note that $r_n>m_0$ and $r_1 < \tfrac12 R_\infty$.
Then, by \eqref{eq-M},
\[
  \frac{\nu(B_r)}{\nu(B_R)}
  \le     \frac{\nu(B_{r_n})}{\nu(B_{R})}
  = \prod_{j=1}^n    \frac{\nu(B_{r_{j}})}{\nu(B_{r_{j-1}})}
  \le \Lambda^{-n}
  = \Lambda \Bigl(\frac{r_{n+1}}{R}\Bigr)^\alp
  \le \Lambda \Bigl(\frac rR\Bigr)^\alp,
\]
where 
\[
  \alp = \frac{\log \Lambda}{\log 4\ka}.
  \qedhere
\]
\end{proof}

\subsection{The Besov (fractional Sobolev) energy}
\label{sect-Besov} 

Let $1 \le p< \infty$ and $\theta >0$.
For a measurable function $u: Z \to [-\infty,\infty]$ (which is
finite $\nu$-a.e.) we define the
\emph{Besov\/ {\rm(}fractional Sobolev\/{\rm)} 
energy {\rm(}\/seminorm{\rm)}} 
by
\begin{equation} \label{eq-Besov-seminorm} 
  [u]_{\theta,p}^{p}=
[u]_{\theta,p,Z}^{p}=
 \int_Z\int_Z \frac{|u(x)-u(y)|^p}{d(x,y)^{\theta p}} 
      \frac{d\nu(y)\, d\nu(x)}{\nu(B(x,d(x,y)))}. 
\end{equation}
Here and elsewhere, the  integrand 
should be interpreted as zero  when $y=x$.

The \emph{Besov\/ \textup(fractional Sobolev\/\textup) space} 
$B^\theta_p(Z)$ 
consists of functions $u$ for which the norm
\begin{equation}   \label{eq-def-Besov-spc}
\|u\|_{B^{\theta}_p (Z)}
:=[u]_{\theta,p}+\|u\|_{L^p(Z)} 
\end{equation}
is finite.
This is a Banach space (after taking $\nu$-a.e.-equivalence classes),
see Remark~9.8 in Bj\"orn--Bj\"orn--Shan\-mu\-ga\-lin\-gam~\cite{BBShyptrace}.
We restrict our attention to Besov spaces with two indices (i.e.\ ``$q=p$'').

The above spaces and (semi)norms go under various names, 
such as fractional Sobolev, 
Gagliardo--Nirenberg, Sobolev--Slobodetski\u{\i} and Besov.
Besov spaces is the usual name in the metric space literature,
while fractional Sobolev spaces is more common in e.g.\  the community
working with nonlinear nonlocal operators on $\R^n$ 
such as the fractional \p-Laplacian $(-\Delta_p)^s$.

Equivalent definitions, using equivalent seminorms, can be found in
Gogatishvili--Koskela--Shan\-mu\-ga\-lin\-gam~\cite[Theorem~5.2 and (5.1)]{GKS} 
for the case when $\nu$ is doubling.
When $\nu$ is also reverse-doubling (or equivalently $Z$ is uniformly perfect),
further equivalent definitions can be found in
Gogatishvili--Koskela--Zhou~\cite[Theorem~4.1 and Proposition~4.1]{GKZ}, 
for example that
the Besov space $B^\theta_p(Z)$ considered here
coincides with the corresponding Haj\l asz--Besov space.
By \cite[Lemmas~6.1 and~6.2]{GKS}, it is 
related to fractional
Haj\l asz spaces, considered already in Yang~\cite{Yang2003}.

The spaces $B^\theta_p(Z)$ are  closely related to the 
so-called Korevaar-Schoen (or Besov--Lipschitz) spaces
$B^\theta_{p,\infty}(Z)$ considered in the literature on fractals,
see e.g.\ 
\cite{KajShiToh}, \cite{KSS25} and~\cite{MurShi}.
More precisely, if $\nu$ is doubling,
then for every $0<\de<\theta$,
\begin{equation}    \label{eq-comp-B-KS}
B^\theta_p(Z) \subset B^\theta_{p,\infty}(Z)  \subset B^{\theta - \de}_p(Z),
\end{equation}
see Kumagai--Shan\-mu\-ga\-lin\-gam--Shimizu~\cite[Lemma~2.6]{KSS25}
(where it also shown that $B^\theta_{p,\infty}(Z)=KS_p^\theta(Z)$ when
$\nu$ is doubling).

Our definition~\eqref{eq-def-Besov-spc} is also equivalent to certain 
norms based on heat kernels (with ``$q=p$''), under 
suitable a priori estimates for the kernel,  see 
Saloff-Coste~\cite[Th\'eor\`eme~2]{SF1990} (on Lie groups)
and Pietruska-Pa\l uba~\cite[Theorem~3.1]{KasiaP2010} (on metric spaces).
For $p=2$, our definition of $[u]_{\theta,2}^2$ coincides with
the energy used in connection with heat kernel estimates
in Chen--Kumagai~\cite{ChenKumagai08} 
when
\[
    J(x,y)=\frac{1}{\mu(B(x,d(x,y))\rho(x,y)^{2\theta}}
\]
therein.
See the above papers for the precise definitions and
earlier references to the theory on $\R^n$, 
 fractals and  Ahlfors regular metric spaces.

\section{Deformation of the metric}
\label{sect-general}

\emph{Recall the standing assumptions from the beginning of Section~\ref{sect-prelim}.
In particular, $Z=(Z,d)$ is a complete metric space  throughout the paper.}

\medskip

In this section we deal with purely metric notions, and do not need any
measure.

Let $\rho:(0,\infty)\to (0,\infty)$ be a nonincreasing function
and let $\rho(0)=\lim_{t\to0\limplus}\rho(t)$.
Also let $\rho(x)=\rho(|x|)$ for $x \in Z$.
We define
\begin{equation}  \label{eq-def-dhat}
  \dhat(x,y) := \inf \biggl\{ \sum_{j=1}^n (\rho(x_j)+\rho(x_{j-1}))d(x_j,x_{j-1}) \biggr\},
\end{equation}
where the infimum is taken over all chains $x=x_0,x_1,\dots,x_n=y$.
We call $\rho$ a \emph{metric density} function.

If $\rho(0)=\infty$, then $\dhat(x,b)=\infty$ for every $x \ne b$, 
and therefore $\dhat$ is not a metric on $Z$.
We therefore introduce 
\[
    Z' =  \{x\in Z: \rho(x)<\infty\} =
    \begin{cases}
      Z, & \text{if } \rho(0)<\infty, \\
      Z \setm \{b\}, & \text{if } \rho(0)=\infty.
    \end{cases}
\]

We will show in Proposition~\ref{prop-dhat-metric} that $\dhat$ is a metric on $Z'$.
The completion of $(Z',\dhat)$ will be denoted by $\Zhat$. 
The metric $\dhat$ extends directly to $\Zhat$.
Lemma~\ref{lem-sari} below implies that if 
$Z$ is uniformly perfect at $b$,
$\rho(0)=\infty$ and 
$\rho\in L^1(0,1)$, 
then $\Zhat$ adds back $b$ to $Z'$
with
\begin{equation*} 
\dhat(b,x):=\dhat(x,b):=  \lim_{y\to b} \dhat(x,y).
\end{equation*}
Balls with respect to the metric $\dhat$ are denoted by 
\[
\Bhat(x,r) := \{ y \in \Zhat: \dhat(x,y) < r\}.
\]

\begin{prop} \label{prop-dhat-metric}
  $\dhat$ is  a metric on $Z'$.
\end{prop}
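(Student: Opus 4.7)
The four metric axioms for $\dhat$ split into a routine trio and one substantive step. I will first check the easy parts: symmetry is automatic since reversing a chain leaves the sum in \eqref{eq-def-dhat} unchanged; $\dhat(x,x)=0$ follows from the trivial chain $x_0=x_1=x$; and the triangle inequality is obtained by concatenating, for each $\eps>0$, near-optimal chains from $x$ to $z$ and from $z$ to $y$ (their union is a valid chain from $x$ to $y$ whose sum is within $\eps$ of $\dhat(x,z)+\dhat(z,y)$), and then letting $\eps\to 0$. Finiteness of $\dhat$ on $Z'\times Z'$ comes from the one-edge chain bound $\dhat(x,y)\le(\rho(x)+\rho(y))d(x,y)$; the exclusion of $b$ when $\rho(0)=\infty$ is precisely what guarantees $\rho(x),\rho(y)<\infty$.

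The substantive step is positive definiteness: proving $\dhat(x,y)>0$ whenever $x\ne y$ in $Z'$. My plan is a truncation argument. I will set $R=|x|+d(x,y)+1$, so that $R>0$ and hence $\rho(R)\in(0,\infty)$. For an arbitrary chain $x=x_0,\dots,x_n=y$ I split on whether it stays in the closed ball $\{z\in Z:|z|\le R\}$. In the contained case, monotonicity of $\rho$ gives $\rho(x_j)\ge\rho(R)$ at every vertex, and the chain sum is bounded below by $2\rho(R)\sum_j d(x_j,x_{j-1})\ge 2\rho(R)\,d(x,y)$. In the escaping case, I let $j_1$ be the first index with $|x_{j_1}|>R$; then for $i<j_1$ both endpoints of edge $i$ still have $|\cdot|\le R$, and even at $i=j_1$ the left endpoint does, so $\rho(x_{i-1})+\rho(x_i)\ge\rho(R)$ for every $i\le j_1$. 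Summing those first $j_1$ edges and using the triangle inequality for $d$ gives at least $\rho(R)\,d(x,x_{j_1})\ge\rho(R)(|x_{j_1}|-|x|)>\rho(R)(R-|x|)=\rho(R)(d(x,y)+1)$. Either way, the chain sum is at least $\rho(R)\,d(x,y)>0$, so $\dhat(x,y)\ge\rho(R)\,d(x,y)>0$.

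The main obstacle is exactly this positive-definiteness step: a priori one worries that by routing a chain through regions where $\rho$ is very small one could drive the cost to zero even when $x\ne y$. The cutoff $R=|x|+d(x,y)+1$ is designed to neutralize this dichotomy cleanly: either the whole chain stays where $\rho\ge\rho(R)$ and the monotonicity bound is immediate, or the initial segment up to the first escape point has $d$-length exceeding $d(x,y)$ while still keeping one endpoint of each of its edges in the region $\{|\cdot|\le R\}$, which is enough to keep $\rho(x_{i-1})+\rho(x_i)\ge\rho(R)$. Everything else is formal, and the finiteness, symmetry and triangle-inequality checks above close the proof.
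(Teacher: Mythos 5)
Your proof is correct and follows essentially the same route as the paper: the routine axioms are handled identically, and your positive-definiteness argument (truncating a chain at its first exit from a region where $\rho$ is bounded below and applying the triangle inequality to the initial segment) is exactly the mechanism of the paper's Lemma~\ref{lem-m-M}, which uses the ball $B(x,d(x,y))$ in place of your sublevel set $\{z: |z|\le R\}$ and yields the lower bound $\dhat(x,y)\ge d(x,y)\inf_{B(x,d(x,y))}\rho$. The only real difference is that the paper isolates this estimate as a standalone lemma so it can be reused later, whereas you run the argument inline.
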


For proving this proposition,
 we will need the following lemma, which will be useful also later.

\begin{lem} \label{lem-m-M}
Let $x,y \in Z'$ be such that $x \neq y$.
Then 
\begin{equation} \label{eq-m-M}
0 < d(x,y) \inf_{B(x,d(x,y))} \rho \le \dhat(x,y) \le (\rho(x)+\rho(y)) d(x,y).
\end{equation}
\end{lem}

\begin{proof}
Let
$x=x_0,x_1,\dots,x_n=y$ be any chain from $x$ to $y$ and
let $j_0$ be the smallest index such that $x_{j_0}\notin B(x,d(x,y))$.
(Such a $j_0$ always exists as $x_n=y \notin B(x,d(x,y))$.)
Let $m= \inf_{B(x,d(x,y))}\rho>0$.
Then 
\begin{align*}
\sum_{j=1}^n (\rho(x_j)+\rho(x_{j-1}))d(x_j,x_{j-1})
& \ge \sum_{j=1}^{j_0 } \rho(x_{j-1})d(x_j,x_{j-1}) \\
& \ge m\sum_{j=1}^{j_0 }d(x_j,x_{j-1})
\ge m d(x, x_{j_0})\ge m d(x,y).
\end{align*}
Taking the infimum over all chains from $x$ to $y$ shows 
the first inequality in~\eqref{eq-m-M}.

For the second inequality, we instead use the trivial chain
$x=x_0,x_1=y$ to obtain that 
\[
   \dhat(x,y) \le (\rho(x)+\rho(y)) d(x,y). 
\qedhere
\]
\end{proof}

\begin{proof}[Proof of Proposition~\ref{prop-dhat-metric}]
Clearly $\dhat(x,x)=0$ for each $x \in Z'$, and $\dhat$ is symmetric and satisfies
the triangle inequality.
Finally, $0 <  \dhat(x,y)< \infty $
for all $x,y \in Z'$ with $x \ne y$, 
by Lemma~\ref{lem-m-M}.
\end{proof}

\begin{lem} \label{lem-d(x,y)-with-int}
Assume that  $x,y \in Z$ and that $|x|\le|y|$.
Then
\[
    \dhat(x,y) \ge \int_{|x|}^{|y|} \rho\,dt.
\]  
\end{lem}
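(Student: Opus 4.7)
The plan is to take an arbitrary chain $x = x_0, x_1, \dots, x_n = y$, bound its weight $\sum_{j=1}^n (\rho(x_j)+\rho(x_{j-1}))d(x_j,x_{j-1})$ from below by $\int_{|x|}^{|y|}\rho\,dt$, and then take the infimum. For each $j$, write $a_j = \min(|x_{j-1}|,|x_j|)$ and $b_j = \max(|x_{j-1}|,|x_j|)$.

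First I would handle a single edge. Since $\rho$ is nonincreasing,
\[
\int_{a_j}^{b_j} \rho(t)\,dt \;\le\; \rho(a_j)(b_j - a_j).
\]
The reverse triangle inequality gives $b_j - a_j = \bigl||x_j|-|x_{j-1}|\bigr| \le d(x_j,x_{j-1})$, and $\rho(a_j) = \max(\rho(x_{j-1}),\rho(x_j)) \le \rho(x_{j-1})+\rho(x_j)$. Combining these,
\[
\int_{a_j}^{b_j}\rho(t)\,dt \;\le\; (\rho(x_j)+\rho(x_{j-1}))\,d(x_j,x_{j-1}).
\]

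Next I would sum over $j$ and swap the sum and integral:
\[
\sum_{j=1}^n \int_{a_j}^{b_j} \rho(t)\,dt = \int_0^\infty \rho(t) \, N(t)\,dt,
\quad \text{where } N(t) = \#\{j : a_j \le t \le b_j\}.
\]
The key observation is a discrete intermediate-value statement: for every $t \in (|x|,|y|)$ one has $N(t)\ge 1$. Indeed, since $|x_0| = |x| < t$ and $|x_n| = |y| > t$, choosing the largest index $j^\ast$ with $|x_{j^\ast}| < t$ gives $j^\ast < n$ and $|x_{j^\ast+1}| \ge t$, so $t \in [a_{j^\ast+1},b_{j^\ast+1}]$. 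Hence
\[
\sum_{j=1}^n (\rho(x_j)+\rho(x_{j-1}))\,d(x_j,x_{j-1}) \;\ge\; \int_0^\infty \rho(t)\,N(t)\,dt \;\ge\; \int_{|x|}^{|y|}\rho(t)\,dt.
\]
Taking the infimum over all chains yields the claim.

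The argument is essentially routine once one notices the correct per-edge inequality; the only step requiring a slight thought is the covering/IVT argument for $N(t)\ge 1$, which I expect to be the main (and only) obstacle, but it is quickly resolved as above. No assumption on $\rho$ beyond being nonincreasing is needed, and the estimate is valid even when one or both sides are infinite (with $\rho(0)$ possibly $\infty$ if $|x|=0$).
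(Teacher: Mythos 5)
Your proof is correct and follows essentially the same route as the paper: bound each link from below by $\int\rho\,dt$ over the interval between $|x_{j-1}|$ and $|x_j|$ (monotonicity of $\rho$ plus the reverse triangle inequality), then observe that these intervals cover $[|x|,|y|]$. The paper phrases the covering step by keeping only the links with $|x_j|\ge|x_{j-1}|$ and chaining the integrals, while you make it explicit with the counting function $N(t)$ and a discrete intermediate-value argument, but the content is identical.
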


\begin{proof}
Let
$x=x_0, x_1, \ldots, x_n=y$ be a chain.
Then
\begin{align*}
\sum_{j=1}^n (\rho(x_j)+\rho(x_{j-1}))d(x_j,x_{j-1})
&\ge \sum_{|x_j|\ge|x_{j-1}|} \rho(x_{j-1})d(x_j,x_{j-1})  \nonumber \\
  &\ge \sum_{|x_j|\ge|x_{j-1}|} \int_{|x_{j-1}|}^{|x_{j}|} \rho\,dt
\ge \int_{|x|}^{|y|} \rho\,dt.
\end{align*}
The claim then follows by taking the infimum over all chains from $x$ to $y$.
\end{proof}

\begin{prop} \label{prop-top-pres}
The deformation $(Z',d) \mapsto (Z',\dhat)$ is topology-preserving, i.e.\ 
for any sequence $\{x_j\}_{j=1}^\infty$ in $Z'$ and $x \in Z'$,
\begin{equation} \label{eq-top-preserving}
\lim_{j \to \infty} d(x_j,x) = 0
\quad \textrm{if and only if} \quad
\lim_{j \to \infty} \dhat(x_j,x) = 0.
\end{equation}
\end{prop}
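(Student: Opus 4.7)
The plan is to get \eqref{eq-top-preserving} directly from the two-sided estimate in Lemma~\ref{lem-m-M}, using the monotonicity of $\rho$ to control both constants, and reserving Lemma~\ref{lem-d(x,y)-with-int} for the case when $d(x_j,x)$ would like to escape to infinity.

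For the forward implication, I would apply the upper bound $\dhat(x_j,x)\le(\rho(x_j)+\rho(x))d(x_j,x)$ and show that $\rho(x_j)$ stays bounded. If $|x|>0$, then $d(x_j,x)\to 0$ forces $|x_j|>|x|/2$ for large $j$, so $\rho(x_j)\le\rho(|x|/2)<\infty$ by monotonicity. If $x=b$, then by the definition of $Z'$ we are in the case $\rho(0)<\infty$, and $\rho(x_j)\le\rho(0)<\infty$ unconditionally. Either way the prefactor is bounded, so $\dhat(x_j,x)\to 0$.

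For the reverse implication, I argue by contradiction: assume $\dhat(x_j,x)\to 0$ but $d(x_j,x)\not\to 0$, and pass to a subsequence with $d(x_j,x)\ge\eps>0$. The lower bound in Lemma~\ref{lem-m-M} gives $\dhat(x_j,x)\ge m_j\,d(x_j,x)$, where
\[
m_j=\inf_{B(x,d(x,x_j))}\rho\ge\rho(|x|+d(x,x_j)),
\]
using that $|y|\le|x|+d(x,y)$ and $\rho$ is nonincreasing. Now split into two cases. If a further subsequence satisfies $d(x_j,x)\le M$, then $\dhat(x_j,x)\ge\rho(|x|+M)\,\eps>0$, contradicting $\dhat(x_j,x)\to 0$. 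Otherwise a further subsequence has $d(x_j,x)\to\infty$, hence $|x_j|\to\infty$ by the reverse triangle inequality. For large $j$ then $|x_j|\ge|x|+1$, and Lemma~\ref{lem-d(x,y)-with-int} yields
\[
\dhat(x_j,x)\ge\int_{|x|}^{|x_j|}\rho\,dt\ge\int_{|x|}^{|x|+1}\rho\,dt>0,
\]
again a contradiction. So $d(x_j,x)\to 0$.

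The main subtlety is making both directions robust at the two extreme behaviours of $\rho$: the potential singularity of $\rho$ at $0$ (handled in the forward direction by the restriction to $Z'$ and the fact that $|x_j|$ stays bounded below by a positive constant when $x\ne b$) and the possible fast decay of $\rho$ at infinity (handled in the reverse direction by invoking the integral estimate of Lemma~\ref{lem-d(x,y)-with-int} instead of the pointwise $m_j$-bound, which on its own would be too weak when $d(x_j,x)\to\infty$).
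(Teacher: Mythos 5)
Your proof is correct and uses essentially the same ingredients as the paper: the two-sided bound of Lemma~\ref{lem-m-M} for the main estimates and Lemma~\ref{lem-d(x,y)-with-int} to rule out escape to infinity. The only difference is organizational — the paper's converse direction first deduces $|x_j|\to|x|$ from Lemma~\ref{lem-d(x,y)-with-int} and then applies the lower bound of Lemma~\ref{lem-m-M}, whereas you argue by contradiction with a bounded/unbounded case split — but the substance is the same.
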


\begin{proof}
If $d(x_j,x)\to 0$, then clearly $|x_j| \to |x|$
and so for big enough $j$ we have $|x_j|\ge |x|/2$ and $\rho(x_j)\le \rho(|x|/2)$.
It thus follows from Lemma~\ref{lem-m-M} that
$\dhat(x_j,x)\to 0$.

Conversely, if $\dhat(x_j,x)\to 0$, then 
$|x_j| \to |x|$ by Lemma~\ref{lem-d(x,y)-with-int}. 
So for large enough $j$ we have $|x_j|\le |x|+1$
and hence
$B(x,d(x_j,x))\subset B(b,3|x|+1)$. 
Thus, Lemma~\ref{lem-m-M} implies that for these large enough $j$,
\[
d(x_j,x)\le \frac{ \dhat(x_j,x)}{\rho(3|x|+1)},
\]
yielding $d(x_j,x)\to 0$.
\end{proof}

Using uniform perfectness we can obtain
the following converse of Lemma~\ref{lem-d(x,y)-with-int}.

\begin{lem} \label{lem-sari}
Assume that $Z$ is uniformly perfect at $b$ 
for radii $r\ge m_0$ with constant $\ka > 1$.
Let $x,y \in Z'$ with $m_0 \le  |x| \le |y|$.
Then
\begin{equation*} 
  \dhat(x,y) \simle \int_{|x|/\ka}^{|y|}\rho\,dt,
\end{equation*}
where the comparison constant only depends on $\ka$.
\end{lem}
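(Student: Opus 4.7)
My plan is to build an explicit chain from $x$ to $y$ by using uniform perfectness at $b$ to place intermediate points on a geometric scale of radii, and then to bound each edge's contribution to the defining sum by an integral of $\rho$ over a matching annular gap. Fix the constant $\kappa>1$ coming from uniform perfectness at $b$ for radii $\ge m_0$ and set $s_j:=\kappa^j|x|$ for $j=-1,0,1,\dotsc$ (so $s_{-1}=|x|/\kappa$, $s_0=|x|$). Let $N\ge 0$ be the integer with $s_N\le |y|<s_{N+1}$. For each $j=1,\dotsc,N$, uniform perfectness applied at radius $s_j\ge |x|\ge m_0$ (which satisfies $B_{s_j}\neq Z$ because $y\notin B_{s_j}$) produces a point $w_j$ with $s_j\le |w_j|<s_{j+1}$. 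Put $w_0:=x$ and $w_{N+1}:=y$; this yields a chain of $N+2$ points.

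Next I estimate a single edge $(w_{j-1},w_j)$ for $j=1,\dotsc,N+1$. The triangle inequality through $b$ gives $d(w_{j-1},w_j)\le |w_{j-1}|+|w_j|\le s_j+s_{j+1}=(1+\kappa)s_j$. Since $\rho$ is nonincreasing and $|w_{j-1}|,|w_j|\ge s_{j-1}$ (using the convention $s_{-1}=|x|/\kappa$ to handle $w_0=x$), we have $\rho(w_{j-1})+\rho(w_j)\le 2\rho(s_{j-1})$. Multiplying,
\[
(\rho(w_{j-1})+\rho(w_j))\,d(w_{j-1},w_j) \le 2(1+\kappa)\,s_j\rho(s_{j-1})=2(1+\kappa)\kappa\, s_{j-1}\rho(s_{j-1}).
\]

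The key elementary observation is the monotonicity bound
\[
s_{j-1}\rho(s_{j-1})=\frac{\kappa}{\kappa-1}(s_{j-1}-s_{j-2})\rho(s_{j-1})\le \frac{\kappa}{\kappa-1}\int_{s_{j-2}}^{s_{j-1}}\rho(t)\,dt,
\]
since $\rho\ge\rho(s_{j-1})$ on $[s_{j-2},s_{j-1}]$. Summing over $j=1,\dotsc,N+1$, the intervals $[s_{j-2},s_{j-1}]$ are disjoint and their union is $[s_{-1},s_N]=[|x|/\kappa,\,s_N]\subset[|x|/\kappa,|y|]$, whence
\[
\dhat(x,y)\le \sum_{j=1}^{N+1}(\rho(w_{j-1})+\rho(w_j))\,d(w_{j-1},w_j) \le C(\kappa)\int_{|x|/\kappa}^{|y|}\rho(t)\,dt,
\]
with $C(\kappa)=2(1+\kappa)\kappa^2/(\kappa-1)$ depending only on $\kappa$, as required.

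The only delicate issue is arranging the indexing so that the outer endpoints of the telescoped integral intervals are $|x|/\kappa$ on the left (which forces introducing $s_{-1}$ to accommodate the first edge $w_0\to w_1$) and an upper endpoint at most $|y|$ on the right (which works because the last edge terminates at $y\in[s_N,s_{N+1})$, whose contribution is still controlled by $\rho(s_N)s_N$). Everything else is a routine application of monotonicity of $\rho$ and the triangle inequality through $b$; the case $N=0$, where the chain is just the trivial one, is automatically covered by the same bookkeeping.
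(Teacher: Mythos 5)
Your construction and bookkeeping for the case $|x|>0$ are correct and follow essentially the same route as the paper: a chain of points at geometrically spaced radii obtained from uniform perfectness at $b$, with each edge's contribution $(\rho(w_{j-1})+\rho(w_j))\,d(w_{j-1},w_j)$ dominated by $\int_{s_{j-2}}^{s_{j-1}}\rho\,dt$ via monotonicity of $\rho$; the telescoping over $[\,|x|/\ka,\,s_N]\subset[\,|x|/\ka,\,|y|\,]$ and the constant $2(1+\ka)\ka^2/(\ka-1)$ all check out (the paper gets $8\ka^3/(\ka-1)$ by a slightly cruder edge bound).

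There is, however, one genuine gap: the case $|x|=0$. This is not vacuous under the hypotheses --- when $m_0=0$ and $\rho(0)<\infty$ we have $Z'=Z$, so $x=b$ is allowed and satisfies $m_0\le|x|$. In that case every $s_j=\ka^j|x|=0$, so your geometric scales degenerate, no $N$ with $s_N\le|y|<s_{N+1}$ exists (for $|y|>0$), and uniform perfectness cannot be applied at radius $s_j=0$; the construction produces no chain at all. Your closing remark that ``the case $N=0$ \dots is automatically covered by the same bookkeeping'' does not address this. The paper handles it separately: using uniform perfectness it picks a sequence $x_i$ with $0<|x_i|\le|y|$ and $|x_i|\to0$, applies the already-proved estimate to $\dhat(x_i,y)$, bounds $\dhat(x,x_i)\le 2\rho(0)\,d(x,x_i)\to0$, and passes to the limit to get $\dhat(x,y)\simle\int_0^{|y|}\rho\,dt$. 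Adding such a limiting step (or restricting your main argument to $|x|>0$ and treating $x=b$ by approximation) closes the gap.
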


\begin{proof}
Assume first that $|x|>0$.
By the uniform perfectness
we can find a finite sequence of points
$x=x_0,\dots,x_n=y$ such that
\[
    \ka^{j-1} |x|  \le |x_j| \le \ka^j |x|,
  \quad j=1,2,\dots,n.
\]
Thus
\begin{align*}
  \dhat(x,y)
  & \le \sum_{j=1}^n (\rho(x_j)+\rho(x_{j-1}))d(x_j,x_{j-1})
   \le 4 \sum_{j=1}^{n} |x_j|\rho(x_{j-1})\\
   & \le 4 \ka^2 \biggl(|x|\rho(x) + \sum_{j=2}^{n}\rho(\ka^{j-2}|x|)\ka^{j-2}|x| \biggr) 
    \le \frac{8\ka^3}{\ka -1}\int_{|x|/\kappa}^{|y|}\rho\,dt.
\end{align*}
If $|x| = 0$, then $\rho(0) < \infty$ and $m_0=0$. 
If $|y| = 0$, the claim is trivial, so assume that $|y| > 0$. 
By the uniform perfectness there exists a sequence $\{x_i\}_{i=1}^\infty$ such that $0 < |x_i| \leq |y|$ for every $i$ and $\lim_{i \to \infty} |x_i| = 0$.
Therefore, by the above estimate for $\dhat(x_i,y)$,
\begin{equation*}
\dhat(x,y)
\leq \dhat(x,x_i) + \dhat(x_i,y)
\simle \rho(0) d(x,x_i) + \int_{|x_i|/\ka}^{|y|} \rho \,dt
\to \int_{0}^{|y|} \rho \,dt, \quad \text{as } i\to\infty.
\qedhere
\end{equation*}
\end{proof}

As a consequence of Lemmas~\ref{lem-d(x,y)-with-int} and~\ref{lem-sari} we 
obtain the following characterizations.

\begin{prop}\label{prop-bounded-onepoint}
Assume that $\rho(0)<\infty$ and that $(Z,d)$ is unbounded
and 
 uniformly  perfect at large scales at $b$. 
Then the following are equivalent\/\textup{:}
\begin{enumerate}
\item \label{a-bdd}
$(Z,\dhat)$ is bounded, 
\item \label{a-int}
\[
\int_{1}^\infty \rho \,dt < \infty,
\]
\item \label{it-not-compl}
$(Z,\dhat)$ is not complete,
\item \label{a-compl}
the completion $\Zhat$ of $(Z,\dhat)$ adds exactly one point, 
denoted $\binfty$,
\item \label{a-spher}
$(\Zhat,\dhat)$ is a sphericalization of $(Z,d)$,
i.e.\ it 
adds exactly one point $\binfty$ to $Z$ and 
\eqref{eq-binfty} 
{\rm(}with $d'$ replaced by $\dhat${\rm)
and \eqref{eq-top-preserving} hold}.
\end{enumerate}
\end{prop}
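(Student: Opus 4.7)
The plan is to prove the equivalences via the cycle (a)$\Leftrightarrow$(b), (b)$\Rightarrow$(e)$\Rightarrow$(d)$\Rightarrow$(c)$\Rightarrow$(a), using Lemmas~\ref{lem-d(x,y)-with-int} and~\ref{lem-sari} as the main quantitative tools together with the topology-preservation Proposition~\ref{prop-top-pres}. For (a)$\Leftrightarrow$(b), I would fix a sequence $\{x_n\}\subset Z$ with $|x_n|\to\infty$ (available since $Z$ is $d$-unbounded); Lemma~\ref{lem-d(x,y)-with-int} gives $\dhat(b,x_n)\ge\int_0^{|x_n|}\rho\,dt$, so (a) forces $\int_1^\infty\rho<\infty$. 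Conversely, Lemma~\ref{lem-sari} bounds $\dhat(x,y)\simle \int_{|x|/\ka}^{|y|}\rho\,dt$ when $|x|\ge1$, and for $|x|<1$ I would insert an auxiliary point $x'$ with $|x'|\in[1,\ka]$, produced by the uniform perfectness at large scales, and use the trivial one-step chain $\dhat(x,x')\le 2\rho(0)(|x|+|x'|)$; combined with the triangle inequality these give a uniform $\dhat$-diameter bound under (b).

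For (b)$\Rightarrow$(e), I consider any sequence $\{x_n\}$ with $|x_n|\to\infty$. Lemma~\ref{lem-sari} gives $\dhat(x_n,x_m)\simle\int_{\min(|x_n|,|x_m|)/\ka}^\infty\rho\,dt\to 0$, so $\{x_n\}$ is $\dhat$-Cauchy; Proposition~\ref{prop-top-pres} precludes a limit in $Z$ since $|x_n|\to\infty$. The same estimate between two escaping sequences shows they are $\dhat$-equivalent, so they all represent a single new point in the completion, which I name $\binfty$. To rule out further new points, I would check that any $\dhat$-Cauchy sequence bounded in $d$ (say $|x_n|\le R$ eventually) is in fact $d$-Cauchy: every relevant ball $B(x_n,d(x_n,x_m))$ lies inside $B_{3R}$, on which $\rho\ge\rho(3R)>0$, so Lemma~\ref{lem-m-M} converts $\dhat$-smallness into $d$-smallness, and the completeness of $(Z,d)$ combined with Proposition~\ref{prop-top-pres} provides a limit in $Z$. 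This yields both (d) and the equivalence \eqref{eq-binfty}, while Proposition~\ref{prop-top-pres} supplies the topology-preserving clause in (e).

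The implications (e)$\Rightarrow$(d)$\Rightarrow$(c) are immediate from the definitions, and (c)$\Rightarrow$(a) I would handle by contraposition: if $\neg$(a) holds then $\neg$(b) holds by the first step, so Lemma~\ref{lem-d(x,y)-with-int} makes any $\dhat$-Cauchy (hence $\dhat$-bounded) sequence $d$-bounded, and the same $d$-Cauchy argument as above yields $\dhat$-convergence, proving completeness. The main obstacle is precisely this step showing that a $\dhat$-Cauchy sequence sitting in $B_R$ is $d$-Cauchy: it is where one must harness the positive lower bound of $\rho$ on $d$-bounded pieces of $Z$ through Lemma~\ref{lem-m-M}, so that $\dhat$-smallness does not degenerate. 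Everything else reduces to bookkeeping with the two integral bounds and the triangle inequality.
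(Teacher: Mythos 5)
Your proposal is correct and follows essentially the same route as the paper's proof: the equivalence of \ref{a-bdd} and \ref{a-int} via Lemmas~\ref{lem-d(x,y)-with-int} and~\ref{lem-sari} (with the $|x|<1$ case patched by an auxiliary point of norm in $[1,\ka]$), the two-sided comparison of $\dhat$ and $d$ on $d$-bounded sets via Lemma~\ref{lem-m-M} to show that $d$-bounded $\dhat$-Cauchy sequences converge in $Z$, the mixing of escaping sequences to produce a single point $\binfty$, and the contrapositive argument that $\neg$\ref{a-int} forces completeness. The one detail the paper makes explicit and you leave implicit is the dichotomy for an arbitrary $\dhat$-Cauchy sequence: you treat sequences with $|x_n|\to\infty$ and sequences eventually contained in some $B_R$, but must also exclude a Cauchy sequence oscillating between the two regimes; this follows in one line from Lemma~\ref{lem-d(x,y)-with-int}, since $\dhat(x,y)\ge\int_{R}^{|y|}\rho\,dt$ stays bounded away from $0$ when $|x|\le R$ and $|y|\to\infty$, so no Cauchy sequence can have both a $d$-bounded and an escaping subsequence.
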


Note that $Z' = Z$, because $\rho(0) < \infty$, and thus $(Z,\dhat)$ is a metric space.
Note also that $\int_0^1 \rho \, dt \leq \rho(0) < \infty$.

\begin{proof} 
\ref{a-bdd}\eqv\ref{a-int}
This follows directly from Lemmas~\ref{lem-d(x,y)-with-int} and~\ref{lem-sari},
as $Z$ is unbounded and so $\sup_{y\in Z} |y|=\infty$.

Next, consider an arbitrary $d$-bounded sequence $\{x_j\}_{j=1}^\infty$ in $Z$.
Then $d(x_j,x_k) \le |x_j|+|x_k|$ and thus, 
$B(x_j,d(x_j,x_k))\subset B(b,3\sup_i|x_i|)$.
Hence,
by Lemma~\ref{lem-m-M},
\begin{equation}   \label{eq-comp-d-dhat-bdd}
  \rho\Bigl(3 \sup_i |x_i|\Bigr)
 d(x_j,x_k) \le    \dhat(x_j,x_k) \le 2\rho(0) d(x_j,x_k).
\end{equation}
It follows that $\{x_j\}_{j=1}^\infty$ is a $\dhat$-Cauchy sequence if and
only if it is a $d$-Cauchy sequence, in which case it also
converges with respect to both metrics, since
$(Z,d)$ is complete.

\ref{a-int}\imp\ref{a-spher} 
Let $\{x_j\}_{j=1}^\infty$  be a $\dhat$-Cauchy sequence.
If it is $d$-bounded, then we have already seen 
by \eqref{eq-comp-d-dhat-bdd} that it converges within $Z$.
On the other hand,  if $\{x_j\}_{j=1}^\infty$ is $d$-unbounded,
then it does not $\dhat$-converge to any $z \in Z$.
Indeed, if $\dhat(x_j,z)\to0$ for some $z\in Z$, 
then by Lemma~\ref{lem-d(x,y)-with-int},
\[
0 <\int_{|z|}^\infty \rho\,dt = 
\limsup_{j\to\infty} \int_{|z|}^{|x_j|} \rho\,dt
\le \limsup_{j\to\infty} \dhat(z,x_j) =0,
\]
which is a contradiction.
We therefore denote the $\dhat$-limit of $\{x_j\}_{j=1}^\infty$
by~$\binfty$.
This also shows that   $\lim_{j\to\infty}|x_{j}|=\infty$,
since any $d$-bounded subsequence of $\{x_j\}_{j=1}^\infty$
would $\dhat$-converge within $Z$ because of \eqref{eq-comp-d-dhat-bdd},
which is impossible.
Similarly,
if $\{y_j\}_{j=1}^\infty$ is another $d$-unbounded $\dhat$-Cauchy sequence, then 
also $\lim_{j\to\infty}|y_{j}|=\infty$.
By mixing $x_j$ and $y_j$, we obtain a new unbounded sequence, which
must be a $\dhat$-Cauchy sequence,
because of Lemma~\ref{lem-sari} and  \ref{a-int}.
Hence every $d$-unbounded $\dhat$-Cauchy sequence converges to $\binfty$,
and $\Zhat=Z \cup \{\binfty\}$.
Moreover, if $\{z_j\}_{j=1}^\infty$ is any sequence
with $\lim_{j \to \infty}|z_j| = \infty$ then it is also $\dhat$-Cauchy by Lemma~\ref{lem-sari} and  \ref{a-int}.
In particular, \eqref{eq-binfty} holds 
{\rm(}with  $d'$ replaced by $\dhat${\rm)}.
Finally, the deformation is topology-preserving on $Z'=Z$
by Proposition~\ref{prop-top-pres} i.e.\ \eqref{eq-top-preserving} holds.

\ref{a-spher}\imp\ref{a-compl}\imp\ref{it-not-compl}
These implications are 
trivial.

$\neg$\ref{a-int}\imp$\neg$\ref{it-not-compl}
Let $\{x_j\}_{j=1}^\infty$  be a $\dhat$-Cauchy sequence.
Then it follows from  Lemma~\ref{lem-d(x,y)-with-int} and $\neg$\ref{a-int} 
that $\{x_j\}_{j=1}^\infty$ is $d$-bounded, and hence by the above it is $\dhat$-convergent within $Z$. 
So $(Z,\dhat)$ is complete.
\end{proof}

\begin{prop}\label{prop-unbounded-complete}
Assume that $(Z,d)$ is bounded
and uniformly perfect at $b$. 
Then the following are equivalent\/\textup{:}
\begin{enumerate}
\item \label{b-unbdd}
$(Z \setminus \{b\},\dhat)$ is unbounded, 
\item \label{b-int}
\[
\int_{0}^{1} \rho \,dt = \infty,
\]
\item \label{b-compl}
$(Z \setminus \{b\},\dhat)$ is complete,
\item \label{b-flat}
$(Z \setminus \{b\},\dhat)$ is a flattening of $(Z \setminus \{b\},d)$,
i.e.\ it is complete and
\eqref{eq-b} 
{\rm(}with  $d'$ replaced by $\dhat${\rm)
and \eqref{eq-top-preserving} hold}.
\end{enumerate}
\end{prop}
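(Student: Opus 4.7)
The plan is to follow the structure of the proof of Proposition~\ref{prop-bounded-onepoint}. I would establish the equivalence \ref{b-unbdd}~$\Leftrightarrow$~\ref{b-int} directly from Lemmas~\ref{lem-d(x,y)-with-int} and~\ref{lem-sari}, then argue \ref{b-int}~$\Rightarrow$~\ref{b-flat}~$\Rightarrow$~\ref{b-compl}, and close the loop via the contrapositive $\neg$\ref{b-int}~$\Rightarrow$~$\neg$\ref{b-compl}. Lemma~\ref{lem-sari} applies here with $m_0=0$ since ``uniformly perfect at $b$'' is the standard definition. A useful preliminary: since $\rho$ is nonincreasing and finite on $(0,\infty)$ and $Z$ is bounded, \ref{b-int} is equivalent to $\int_0^{|y|}\rho\,dt=\infty$ for every $y\in Z\setminus\{b\}$, whereas its failure yields $\int_0^{\diam Z}\rho\,dt<\infty$ (and in particular $\rho(0)<\infty$).

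For \ref{b-unbdd}~$\Leftrightarrow$~\ref{b-int}, I would fix $y_0\in Z\setminus\{b\}$ and use uniform perfectness to select $x_j\in Z\setminus\{b\}$ with $|x_j|\to 0$. Under \ref{b-int}, Lemma~\ref{lem-d(x,y)-with-int} gives $\dhat(x_j,y_0)\ge\int_{|x_j|}^{|y_0|}\rho\,dt\to\infty$. Conversely, if $\int_0^1\rho\,dt<\infty$, Lemma~\ref{lem-sari} bounds $\dhat(x,y)\simle\int_0^{\diam Z}\rho\,dt$ uniformly on $Z\setminus\{b\}$. The same pair of lemmas also delivers the asymptotic equivalence~\eqref{eq-b} needed for the flattening condition: Lemma~\ref{lem-d(x,y)-with-int} combined with \ref{b-int} gives $|x_j|\to 0\Rightarrow\dhat(x_j,b_0)\to\infty$, and Lemma~\ref{lem-sari} yields the contrapositive, since $|x_j|\ge\delta>0$ along a subsequence would force $\dhat(x_j,b_0)\simle\int_{\delta/\ka}^{\diam Z}\rho\,dt<\infty$.

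The heart of \ref{b-int}~$\Rightarrow$~\ref{b-flat} is completeness. Given a $\dhat$-Cauchy sequence $\{x_j\}$ in $Z\setminus\{b\}$, I would first establish a uniform lower bound $|x_j|\ge\delta>0$: by the Cauchy condition $\dhat(x_j,x_1)\le M$ for some $M$ and all $j$, and Lemma~\ref{lem-d(x,y)-with-int} then forces $\int_{|x_j|}^{|x_1|}\rho\,dt\le M$ whenever $|x_j|<|x_1|$. Since $\int_0^{|x_1|}\rho\,dt=\infty$ by \ref{b-int}, monotone convergence produces the desired $\delta$. On $\{|x|\ge\delta\}$, Lemma~\ref{lem-m-M} gives $\dhat(x,y)\ge\rho(2\diam Z)\,d(x,y)$, so $\{x_j\}$ is $d$-Cauchy and converges to some $x\in Z$ with $|x|\ge\delta$; Proposition~\ref{prop-top-pres} then transfers this to $\dhat$-convergence inside $Z\setminus\{b\}$. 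Together with topology-preservation (Proposition~\ref{prop-top-pres}), this completes \ref{b-flat}, and \ref{b-flat}~$\Rightarrow$~\ref{b-compl} is immediate from the definition of flattening.

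For $\neg$\ref{b-int}~$\Rightarrow$~$\neg$\ref{b-compl}, I would again use uniform perfectness to pick $x_j\in Z\setminus\{b\}$ with $|x_j|\to 0$. Lemma~\ref{lem-sari} gives $\dhat(x_j,x_k)\simle\int_0^{\max(|x_j|,|x_k|)}\rho\,dt\to 0$, since $\int_0^{\eps}\rho\,dt\to 0$ as $\eps\to 0$ under $\int_0^1\rho\,dt<\infty$. Thus $\{x_j\}$ is $\dhat$-Cauchy, yet any $\dhat$-limit in $Z\setminus\{b\}$ would, by Proposition~\ref{prop-top-pres}, also be a $d$-limit of $\{x_j\}$, forcing it to equal $b\notin Z\setminus\{b\}$. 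The subtlest step is extracting the \emph{uniform} positive lower bound on $|x_j|$ in the completeness argument; this is precisely where the divergence of the integral in \ref{b-int} is used decisively.
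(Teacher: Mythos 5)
Your proposal is correct and follows essentially the same route as the paper's proof: the same cycle of implications, with Lemmas~\ref{lem-d(x,y)-with-int} and~\ref{lem-sari} giving \ref{b-unbdd}$\Leftrightarrow$\ref{b-int} and the condition~\eqref{eq-b}, the lower bound $\inf_j|x_j|>0$ plus Lemma~\ref{lem-m-M} giving completeness, and Proposition~\ref{prop-top-pres} handling topology preservation and the failure of completeness under $\neg$\ref{b-int}. The only slip is the inessential parenthetical claim that $\neg$\ref{b-int} implies $\rho(0)<\infty$ (false, e.g.\ for $\rho(t)=t^{-1/2}$), but nothing in your argument actually uses it.
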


Note that \ref{b-int} implies that $\rho(0)=\infty$ and hence 
that $Z'=Z\setm\{b\}$.

\begin{proof}
\ref{b-unbdd}\eqv\ref{b-int} 
This follows directly from Lemmas~\ref{lem-d(x,y)-with-int} and~\ref{lem-sari},
since $Z$ is uniformly perfect at $b$ and hence $\inf_{x\in Z \setminus \{b\}} |x|=0$.

\ref{b-int}\imp\ref{b-flat}
Let $\{x_j\}_{j=1}^\infty$  be a $\dhat$-Cauchy sequence in $Z\setm\{b\}$.
Then it follows from \ref{b-int} and Lemma~\ref{lem-d(x,y)-with-int} that 
$\inf_i |x_i|>0$.
Since $(Z,d)$ is bounded, Lemma~\ref{lem-m-M} implies that
\[
  \rho\Bigl(\sup_{x\in Z} |x|\Bigr) 
 d(x_j,x_k) \le    \dhat(x_j,x_k) \le 2\rho\Bigl(\inf_i |x_i|\Bigr) d(x_j,x_k).
\]
Thus $\{x_j\}_{j=1}^\infty$ is a $d$-Cauchy sequence,
and moreover has the same $\dhat$-limit as the $d$-limit provided
by the completeness of $Z$.
That the deformation is topology-preserving on $Z \setm \{b\}$
follows from Proposition~\ref{prop-top-pres}.

It remains to show \eqref{eq-b}.
To this end, let 
$b_0 \in Z \setm \{b\}$
and let
$\{x_j\}_{j=1}^\infty$ be an arbitrary sequence in 
$Z \setm \{b\}$.
If $|x_j| \to 0$, then it follows from \ref{b-int} and 
Lemma~\ref{lem-d(x,y)-with-int} that
$\dhat(x_j,b_0) \to \infty$.
Conversely, if $\dhat(x_j,b_0) \to \infty$
then it follows from \ref{b-int},
Lemma~\ref{lem-sari} and the boundedness of $(Z,d)$ that
$|x_j| \to 0$.

\ref{b-flat}\imp\ref{b-compl}
This is trivial.

$\neg$\ref{b-int}\imp$\neg$\ref{b-compl}
By the uniform perfectness, there is a sequence $\{x_j\}_{j=1}^\infty$ in 
$Z \setminus \{b\}$
such that $|x_j| \to 0$, as $j \to \infty$.
By Lemma~\ref{lem-sari} and $\neg$\ref{b-int}, 
$\{x_j\}_{j=1}^\infty$ is a $\dhat$-Cauchy sequence.
On the other hand, it follows from 
Proposition~\ref{prop-top-pres}
that $\{x_j\}_{j=1}^\infty$  cannot $\dhat$-converge to 
any $z\ne b$.
\end{proof}

\begin{proof}[Proof of Proposition~\ref{prop-bounded-onepoint-intro}]
Part~\ref{in-a} follows directly from 
Proposition~\ref{prop-bounded-onepoint},
while \ref{in-b} follows directly from
Proposition~\ref{prop-unbounded-complete}.
\end{proof}

Next we will show that uniform perfectness is in a certain sense preserved by 
metric transformations.

\begin{prop} \label{prop-unif-at-infty}
Assume that $\rho(0)<\infty$, that $(Z,d)$ is unbounded and 
 uniformly  perfect at large scales at $b$,
and that $(Z,\dhat)$ is bounded.
Also  assume that there is $A \ge 1$ such that
\begin{equation}   \label{eq-doubl-rho}
\rho(t) \le A\rho(2t) \quad \text{for every $t>0$}.
\end{equation}

Then $(\Zhat,\dhat)$ is uniformly perfect at $\binfty$ 
with a constant $\kahat > 1$ that depends only on $A$, $\ka$ and $\rho(0)/\rho(1)$.
\end{prop}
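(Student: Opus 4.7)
The plan is to show that for every $r>0$ with $\Bhat(\binfty,r)\ne\Zhat$ there exists $x\in\Zhat$ satisfying $r\le\dhat(x,\binfty)<\kahat r$. The key tool will be the two-sided estimate
\[
F(|x|) \le \dhat(x,\binfty) \le C_\ka F(|x|/\ka)
\quad\text{whenever } x\in Z \text{ and } |x|\ge 1,
\]
where $F(s):=\int_s^\infty\rho\,dt$ is finite by Proposition~\ref{prop-bounded-onepoint} and $C_\ka$ is the constant from Lemma~\ref{lem-sari}. This is obtained by taking a sequence $z_n\in Z$ with $|z_n|\to\infty$, so that $\dhat(z_n,\binfty)\to 0$, and passing to the limit in Lemmas~\ref{lem-d(x,y)-with-int} and~\ref{lem-sari}.

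Next I would use the doubling hypothesis \eqref{eq-doubl-rho} to derive a doubling property for $F$. From $\int_{s/2}^s\rho\,dt\le (s/2)\rho(s/2)\le (sA/2)\rho(s)$ together with $F(s)\ge s\rho(2s)\ge s\rho(s)/A$ one obtains
\[
F(s/2)\le \bigl(1+\tfrac12 A^2\bigr)F(s),
\quad s>0,
\]
whose iteration yields $F(s/\ka)\le K F(\ka s)$ for a constant $K$ depending only on $A$ and $\ka$.

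The main argument then splits into two cases. If $r\le F(\ka)$, then by continuity and strict monotonicity of $F$, together with $F(\ka s)\to 0$ as $s\to\infty$, there is $s^*\ge 1$ with $F(\ka s^*)=r$. Uniform perfectness of $(Z,d)$ at $b$ for radii $\ge 1$ produces $y\in B_{\ka s^*}\setm B_{s^*}$, and combining the two-sided estimate with the $F$-doubling gives
\[
r=F(\ka s^*)\le F(|y|)\le\dhat(y,\binfty)\le C_\ka F(|y|/\ka)\le C_\ka F(s^*/\ka)\le C_\ka K r.
\]
If instead $r>F(\ka)$, I would bound the $\dhat$-diameter $D_{\max}:=\sup_{x\in Z}\dhat(x,\binfty)$. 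For $|x|\ge 1$ the two-sided estimate already yields $\dhat(x,\binfty)\le C_\ka F(1/\ka)\le C_\ka K F(\ka)$; for $|x|<1$ I would use the one-step chain $x\to y_1\to\binfty$ through any $y_1\in B_\ka\setm B_1$, giving $\dhat(x,\binfty)\le 2\rho(0)(\ka+1)+C_\ka K F(\ka)$. Iterated doubling of $\rho$ combined with $F(\ka)\ge\ka\rho(2\ka)$ bounds $\rho(0)=\alpha\rho(1)\le C_1 F(\ka)$ in terms of $\alpha:=\rho(0)/\rho(1)$, so $D_{\max}\le C'' F(\ka)<C'' r$. Any $x$ with $\dhat(x,\binfty)\ge r$, which exists by hypothesis, then automatically satisfies $r\le\dhat(x,\binfty)<C'' r$.

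The main technical point is Case~2, where one must express $\rho(0)$, and thereby the $\dhat$-diameter of $\Zhat$, in terms of $F(\ka)$; this is precisely where the doubling of $\rho$ and the ratio $\rho(0)/\rho(1)$ enter the final constant, and explains why these parameters appear in the statement. Combining the two cases gives the conclusion with $\kahat=\max(C_\ka K, C'')$, depending only on $A$, $\ka$, and $\rho(0)/\rho(1)$.
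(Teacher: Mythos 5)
Your proposal is correct and follows essentially the same route as the paper: both rest on the two\mbox{-}sided estimate $\dhat(x,\binfty)\simeq\int_{|x|}^\infty\rho\,dt$ from Lemmas~\ref{lem-d(x,y)-with-int} and~\ref{lem-sari}, the doubling of $\rho$ to control the resulting integral, uniform perfectness at large scales to produce a witness at the right distance for small $r$, and a bound on $\sup_x\dhat(x,\binfty)$ in terms of $\rho(0)/\rho(1)$ for large $r$. The only cosmetic difference is that you locate the witness via the intermediate value theorem applied to $F$, whereas the paper uses a discrete sequence $x_j$ with $\ka^{j-1}\le|x_j|\le\ka^j$ and comparability of consecutive $\dhat(x_j,\binfty)$.
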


Condition \eqref{eq-doubl-rho} can be seen as a \emph{doubling condition}
for the nondecreasing function $t\mapsto 1/\rho(t)$.

\begin{proof}
By the uniform perfectness at large scales, there are $x_j \in Z$ 
such that
\[
    \ka^{j-1}  \le |x_j| \le \ka^{j},
  \quad j=1,2,\dots.
\]  
In particular, $\dhat(x_j,\binfty)\to0$ by \eqref{eq-binfty} (from
Proposition~\ref{prop-bounded-onepoint}\,\ref{a-spher}).
By Lemmas~\ref{lem-d(x,y)-with-int} and~\ref{lem-sari}, 
and the doubling property~\eqref{eq-doubl-rho} of $\rho$ we then see that
\begin{equation} \label{eq-23}
    \int_{|x|}^\infty \rho \,dt 
\le \dhat(x,\binfty) 
\simle \int_{|x|/\ka}^{\infty}\rho\,dt
\simeq \int_{|x|}^{\infty}\rho\,dt
  \quad \text{if } |x| \ge 1.
\end{equation}
Since $1  \le |x_1| \le \ka$, the monotonicity of $\rho$ and another use 
of \eqref{eq-doubl-rho} give us that
\begin{equation} \label{eq-24}
    \dhat(x_1,\binfty) \simeq \int_{1}^{\infty}\rho\,dt \ge \rho(2) \simeq \rho(1)
\end{equation}
and that there exists a constant $\ka'>1$ (only depending on $A$ and $\ka$) such that 
\begin{equation*}  
\frac{1} {\ka'} \dhat(x_j,\binfty) \le \dhat(x_{j-1},\binfty) \le \ka' \dhat(x_j,\binfty),
  \quad j=2,3,\dots.
\end{equation*}
Let $0 < r \leq \dhat(x_1,\binfty)$. 
Since
 $\dhat(x_j,\binfty)\to0$, as $j \to \infty$, 
there is a smallest
integer $j_r$
such that $\dhat(x_{j_r+1},\binfty) < r$.
Then $j_r \ge 1$ and
\begin{equation*}
r \leq \dhat(x_{j_r},\binfty) \le \ka'\dhat(x_{j_r+1},\binfty)
< \ka' r. 
\end{equation*}

Next,  \eqref{eq-23} and~\eqref{eq-24} show that
\[
   \dhat(x,\binfty) \simle \dhat(x_1,\binfty) 
\quad \text{if } |x| \ge 1.
\]
On the other hand, if $|x| < 1$ then $d(x,x_1) \le 1+\ka$ and estimating
$\dhat(x,x_1)$ with the trivial chain $x=x_0,x_1$, together with 
the monotonicity of $\rho$ and~\eqref{eq-24}, shows that
\[
   \frac{\dhat(x,\binfty)}{\dhat(x_1,\binfty)}
   \le \frac{\dhat(x,x_1)+\dhat(x_1,\binfty)}{\dhat(x_1,\binfty)}
   \simle \frac{2(1+\ka) \rho(0)}{\rho(1)} +1
   \simle \frac{\rho(0)}{\rho(1)}.
\]
We can therefore find $\kahat \ge \ka'$, depending
only on $A$, $\ka$ and $\rho(0)/\rho(1)$, such that
\[
\kahat \geq \frac{ \sup_{x \in Z} \dhat(x,\binfty)}{\dhat(x_1,\binfty)},
\]
and thus $\Bhat(\binfty,\kahat r)= \Zhat$
when $r > \dhat(x_1,\binfty)$.
Hence, $(\Zhat,\dhat)$ is uniformly perfect at $\binfty$ with constant $\kahat$.
\end{proof}

\begin{prop} \label{prop-unif-at-b'}
Assume that $Z$ is uniformly  perfect at $b$ and that
\begin{equation} \label{eq-unif-at-b'}
\int_0^1 \rho \,dt = \infty.
\end{equation}
Also assume that the doubling condition~\eqref{eq-doubl-rho} for $\rho$ holds.
Then for every $b' \in Z'$, the space $(Z', \dhat)$
is uniformly perfect at large scales at $b'$ with a constant $\kahat > 1$ that depends only on $A$, $\ka$
and $|b'|\rho(b')$.
\end{prop}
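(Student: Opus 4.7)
The plan is to use the uniform perfectness of $Z$ at $b$ to construct probe points $y_j$ close to $b$ in the $d$-metric (and hence far from $b'$ in $\dhat$), and then reduce the uniform perfectness of $(Z',\dhat)$ at $b'$ for radii $r\ge 1$ to the controlled growth of the integrals $S_j := \int_{\ka^{-j}|b'|}^{|b'|}\rho\,dt$.

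First I would use the uniform perfectness of $Z$ at $b$ with constant $\ka$ to pick, for each $j\ge 1$, a point $y_j \in Z\setminus\{b\}$ with $\ka^{-j-1}|b'| \le |y_j| < \ka^{-j}|b'|$. Lemmas~\ref{lem-d(x,y)-with-int} and~\ref{lem-sari}, together with the doubling condition~\eqref{eq-doubl-rho} on $\rho$, then give $\dhat(b',y_j) \simeq S_j$, with constants depending only on $A$ and $\ka$. A short computation using~\eqref{eq-doubl-rho} also yields the geometric growth $S_{j+1} \le C_0 S_j$ for some $C_0=C_0(A,\ka)$, while~\eqref{eq-unif-at-b'} combined with~\eqref{eq-doubl-rho} forces $S_j\to\infty$. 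This already gives unboundedness of $(Z',\dhat)$, and a direct estimate on the interval $[|b'|/\ka,|b'|]$ yields $S_1 \simeq |b'|\rho(b')$.

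Next, given $r\ge 1$ with $\Bhat(b',r)\ne Z'$, I would split into two cases. If $r\ge \dhat(b',y_1)$, let $j\ge 1$ be minimal with $\dhat(b',y_j)\ge r$; when $j\ge 2$, the geometric estimate $\dhat(b',y_j) \simeq S_j \le C_0 S_{j-1} \simeq \dhat(b',y_{j-1}) < r$ places $y_j$ inside $\Bhat(b',\kahat r)$ for a suitable $\kahat=\kahat(A,\ka)$, and the case $j=1$ is immediate since then $\dhat(b',y_1) \simle |b'|\rho(b')$. If instead $1\le r<\dhat(b',y_1)$, then $y_1$ itself lies outside $\Bhat(b',r)$ and satisfies $\dhat(b',y_1)/r \le \dhat(b',y_1) \simle |b'|\rho(b')$. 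Combining the two cases yields $\kahat$ as a constant multiple of $\max(1,|b'|\rho(b'))$, with the multiple depending only on $A$ and $\ka$.

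The main obstacle is precisely this second case. Since $Z$ is assumed uniformly perfect only at $b$ (and not at $b'$), there are no guaranteed probe points close to $b'$ in the original metric, and hence no direct way to realize $\dhat$-distances smaller than $\dhat(b',y_1)\simeq |b'|\rho(b')$ other than ``overshooting'' with $y_1$ itself. This mirrors the corresponding situation at $\binfty$ treated in Proposition~\ref{prop-unif-at-infty}, and is exactly what forces the dependence of $\kahat$ on $|b'|\rho(b')$.
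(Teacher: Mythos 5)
Your proposal is correct and follows essentially the same route as the paper's proof: the same probe points $\ka^{-j-1}|b'|\le |y_j|\le \ka^{-j}|b'|$ obtained from uniform perfectness at $b$, the same two-sided estimate $\dhat(b',y_j)\simeq \int_{|y_j|}^{|b'|}\rho\,dt$ via Lemmas~\ref{lem-d(x,y)-with-int} and~\ref{lem-sari} together with the doubling condition~\eqref{eq-doubl-rho}, the same comparability of consecutive distances, and the same final adjustment of $\kahat$ by a factor $\max\{1,\dhat(b',y_1)\}\simeq\max\{1,|b'|\rho(b')\}$ to cover radii $1\le r<\dhat(b',y_1)$. Your write-up merely makes explicit the case analysis that the paper compresses into its last two sentences.
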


\begin{proof}
Note that by \eqref{eq-unif-at-b'} and the monotonicity
of $\rho$, we have $\rho(0) = \infty$ and therefore $Z' = Z \setminus \{b\}$.
By the uniform perfectness at $b$, there are $x_j \in Z'$
such that
\[
     \ka^{-j-1}|b'|  \le |x_j| \le \ka^{-j}|b'|,
   \quad j=1,2,\dots.
\]
By Lemmas~\ref{lem-d(x,y)-with-int} and~\ref{lem-sari},
we see that
\[ 
\int_{|x_j|}^{|b'|} \rho \,dt
 \le \dhat(x_j,b')
\simle \int_{|x_{j}|/\ka}^{|b'|}\rho\,dt 
= \int_{|x_{j}|/\ka}^{|x_j|}\rho\,dt + \int_{|x_j|}^{|b'|}\rho\,dt.
\] 
The doubling condition~\eqref{eq-doubl-rho} of $\rho$ gives that
\[
 \int_{|x_{j}|/\ka}^{|x_j|}\rho\,dt 
\simeq \rho(x_j) |x_j| \simle
\int_{|x_j|}^{|b'|}\rho\,dt
\]
and then similarly, 
\[
\dhat(x_j,b') 
\simeq \int_{|x_j|}^{|b'|} \rho \,dt
\simeq \int_{|x_{j+1}|}^{|b'|} \rho \,dt
 \simeq  \dhat(x_{j+1},b').
 \]
In particular, $\dhat(x_1,b') \simeq |b'|\rho(b')$.
Here the comparison constants depend only on $A$ and $\ka$.
From \eqref{eq-unif-at-b'} we get that $\lim_{j \to \infty} \dhat(x_j,b') = \infty$.
Then for some $\kahat>1$ and each $r\ge \dhat(x_1,b')$ there is $j_r $ such that
\[
x_{j_r} \in \Bhat(b',\kahat r) \setm \Bhat(b',r).
\]
After replacing $\kahat$ by $\kahat \max\{1,\dhat(x_1,b')\}$, the same statement holds for all $r\ge1$.
\end{proof}

\begin{remark} \label{rmk-other-approaches}
We will now 
explain why it is important in
this paper to define  
sphericalization and flattening using
a sum as in \eqref{eq-def-dhat}  rather than 
a product.
Suppose  that one would like to sphericalize an unbounded metric space $(Z,d)$ using
the formula 
\begin{equation}  \label{eq-def-dtilde}
  \dtilde(x,y) := \inf \biggl\{ \sum_{j=1}^n \rhotilde(x_j)\rhotilde(x_{j-1})d(x_j,x_{j-1})\biggr\},
\end{equation}
where the infimum is taken over all chains $x=x_0,x_1,\dots,x_n=y$, 
and with a general positive nonincreasing function $\rhotilde(x)=\rhotilde(|x|)$ 
such that $\rhotilde(0)<\infty$.
Note that a transformation 
using products as in \eqref{eq-def-dtilde}
that is equivalent to a transformation 
using sums as in \eqref{eq-def-dhat},
or path integrals,
would typically have $\rhotilde\simeq\sqrt{\rho}$. 

The main problem with the product definition is that 
$\rhotilde(x_j)\rhotilde(x_{j-1})$ can be small even when $\rhotilde(x_j)$
is large, provided that $x_{j-1}$ is very far away and $\rhotilde$ is fast decreasing.
We can see this by considering a transformation with 
$\rhotilde$ such that $\lim_{t \to \infty} t\rhotilde(t)=0$.
Let $z_1,z_2 \in Z$.
Since $(Z,d)$ is  unbounded, there is 
a sequence $\{y_j\}_{j=1}^\infty$ such that $|y_j| \to \infty$ as $j \to \infty$.
We then  get (using the chain
$x_0=z_1,x_1=y_j,x_2=z_2$) that
for sufficiently large $j$,
\begin{alignat*}{2}
    \dtilde(z_1,z_2) 
   &\le \rhotilde(z_1) \rhotilde(y_j)d(y_j,z_1) + \rhotilde(z_2) \rhotilde(y_j)d(y_j,z_2)  \\
   &\le  2(\rhotilde(z_1)+\rhotilde(z_2))\rhotilde(y_j) |y_j| 
\to 0,  & \quad &
\text{as } j \to \infty.
\end{alignat*}

So in order for $\dtilde$ to be a metric, we need 
$\liminf_{|x|\rightarrow\infty}|x|\rhotilde(|x|)>0$.
If this type of sphericalization of e.g.\   
$Z=\Rn$ is equipped
with the measure $d\tilde{\nu}= \tilde{\rho}^\sigma \,dx$, then
$\tilde{\nu}(\Rn)=\infty$ whenever $\sigma \le n$, and thus $\tilde{\nu}$ is not 
a doubling measure (since the sphericalized space is required to be bounded).
In conclusion,
using sums as in~\eqref{eq-def-dhat}, rather than products as in~\eqref{eq-def-dtilde},
gives much more flexibility.

If one instead uses the approach by 
Balogh--Buckley~\cite{BaloghBuckley} then  one is 
limited to pathconnected spaces with additional connectivity at $\infty$.
In this case Lemma~\ref{lem-d(x,y)-with-int} holds.
However, with such an approach,
the completion of
$(\R,\dhat)$ will add two points, corresponding to $\pm \infty$, and 
is thus \emph{not} a sphericalization in our sense.
Hence additional connectivity at $\infty$ is required,
which would severely
restrict the spaces under consideration.
\end{remark}

\section{Sphericalization and flattening}
\label{sect-both}

\emph{In the rest of the paper, 
we assume that the complete metric space $(Z,d)$ is equipped with a doubling measure $\nu$.}

\medskip

In order to treat sphericalization and flattening simultaneously it will be convenient to introduce the function
\begin{equation}     \label{eq-def-m(x)}
   m(t)=t+m_0,
\end{equation}
where $m_0\in \{0,1\}$
is the same constant that appears in the uniform perfectness condition.
For $x\in Z$, we let $m(x)=m(|x|)$.
We will use 
$m_0=0$ for flattening and $m_0=1$ for sphericalization.

We will use the metric density function
\begin{equation} \label{eq-special-rho}
 \rho(x)= \rho(|x|) = \frac{1}{m(x)\nu(B_{m(x)})^{1/\s}},
\quad \text{where $\s>0$ is fixed,}
\end{equation}
with the interpretation that $\rho(0)=\infty$ if $m_0=0$.
For $m_0=1$, we have $\rho(0)=1/\nu(B_1)^{1/\s}<\infty$.
Note that $\rho$ is decreasing and that by the doubling property of $\nu$,
\begin{equation}   \label{eq-rho(x)-comp-m(x)}
\rho(x) \simeq \rho(y) \quad \text{whenever } \frac1M m(x) \le m(y) \le M m(x)
\text{ for some $M>1$},
\end{equation}
where the comparison constants depend only on $C_\nu$,  $M$ and $\s$.
In particular, $\rho$ satisfies condition~\eqref{eq-doubl-rho}.

Equip $Z$ with the metric $\dhat$ defined as 
in~\eqref{eq-def-dhat} with the above $\rho$.
Recall that 
\[
R_\infty  = \sup_{x \in Z} |x|
\quad \text{and}  \quad
Z' = \begin{cases}
      Z, & \text{if } m_0=1, \\ 
      Z \setm \{b\}, & \text{if } m_0=0. 
    \end{cases}
\]
Note that $m(x) > 0$ for every $x \in Z'$.
Recall also that $\Zhat$ is the metric completion of $(Z',\dhat)$ and that we denote balls with respect to $(\Zhat,\dhat)$ by $\Bhat$.
We equip $(\Zhat,\dhat)$ with the measure
\begin{equation*}
d\nuhat=\rho^\s\, d\nu,
\end{equation*}
with the interpretation that $\nuhat(\{\binfty\})=0$ if $\binfty \in \Zhat$.

\begin{remark}  \label{rmk-inversions}
Inversions are also covered by our results:
When $Z$ is unbounded and we let $m_0=0$, we obtain an \emph{inversion}
which maps $b$ to infinity, while $\binfty$ becomes a finite point in $\Zhat$.
The results in Sections~\ref{sect-both}--\ref{sect-Besov-energy-both},
including Theorems~\ref{thm-doubl-m(x)} and~\ref{besov-preserved-both},
are formulated so that they cover sphericalizations  and flattenings, as well as 
inversions.

We leave it to the interested reader, to investigate what happens when one composes
an inversion with 
 an inversion, \`a la our results in Section~\ref{sect-duality}.
\end{remark}

In this section we deduce lemmas not needing uniform perfectness, 
while in the next section we make uniform perfectness a standing assumption.

\begin{lem} \label{lem-d(x,y)-m(x)}
Let $x,y \in Z'$ and $M>1$.
\begin{enumerate}
\item \label{it-y-ge-M-x}
If $m(y) \ge M m(x)$, then
 \begin{equation}   \label{eq-dhat-ge-m(x)}
   \dhat(x,y) \simge \rho(x) m(x) =
\frac{1}{\nu(B_{m(x)})^{1/\s}}
   \ge \frac{1}{\nu(B_{m(y)})^{1/\s}}.
\end{equation}
\item \label{it-y-le-M-x}
If $M^{-1}m(x) \le m(y) \le M m(x)$, then
 \[
   \dhat(x,y) \simeq \rho(x) d(x,y).
\]  
\end{enumerate}
\medskip 
In both cases, the comparison constants depend only on $C_\nu$, $M$ and $\s$.
\end{lem}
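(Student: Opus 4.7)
Both parts will be derived from the general estimates already established, namely Lemma~\ref{lem-d(x,y)-with-int} (integral lower bound) and Lemma~\ref{lem-m-M} (two-sided chain bound), combined with the doubling of $\nu$ and the comparability~\eqref{eq-rho(x)-comp-m(x)}. The computational device underlying everything is that under the specific choice~\eqref{eq-special-rho}, the translation substitution $u=m(t)=t+m_0$ converts $\int_{|x|}^{|y|}\rho(t)\,dt$ into $\int_{m(x)}^{m(y)} du/(u\,\nu(B_u)^{1/\s})$, whose dyadic pieces are uniformly controlled by doubling.

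For part~\ref{it-y-ge-M-x}, first note that $m(y)\ge Mm(x)>m(x)$ and the strict monotonicity of $m$ force $|x|<|y|$, so Lemma~\ref{lem-d(x,y)-with-int} applies. After the change of variable above, I restrict the integral to the subinterval $[m(x),c\,m(x)]$, where $c:=\min\{M,2\}>1$; on this interval doubling gives $\nu(B_u)\le C_\nu \nu(B_{m(x)})$, so the integral is at least $(\log c)/(C_\nu^{1/\s}\nu(B_{m(x)})^{1/\s})$. This yields $\dhat(x,y)\simge 1/\nu(B_{m(x)})^{1/\s}$, which equals $\rho(x)m(x)$ by a direct computation from~\eqref{eq-special-rho}; the final inequality in~\eqref{eq-dhat-ge-m(x)} is just monotonicity of $\nu$ together with $m(x)\le m(y)$.

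For part~\ref{it-y-le-M-x}, Lemma~\ref{lem-m-M} already supplies $m\cdot d(x,y)\le \dhat(x,y)\le(\rho(x)+\rho(y))d(x,y)$, where $m=\inf_{B(x,d(x,y))}\rho$. The upper bound is immediate since the hypothesis is exactly the comparability condition in~\eqref{eq-rho(x)-comp-m(x)}, giving $\rho(y)\simeq\rho(x)$. For the lower bound I must show $m\simge\rho(x)$: for any $z\in B(x,d(x,y))$ the triangle inequality combined with $|y|\le m(y)\le Mm(x)$ and $|x|\le m(x)$ yields $|z|\le 2|x|+|y|\le (2+M)m(x)-m_0$, hence $m(z)\le(2+M)m(x)$. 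Since $\rho$ is nonincreasing, either $m(z)\le m(x)$ (so $\rho(z)\ge \rho(x)$ directly) or $m(x)\le m(z)\le(2+M)m(x)$, in which case~\eqref{eq-rho(x)-comp-m(x)} applied with constant $2+M$ yields $\rho(z)\simeq\rho(x)$; either way $m\simge\rho(x)$. No step looks genuinely difficult; the main point of care is keeping the comparison constants dependent only on $C_\nu$, $M$ and $\s$ and checking that the argument is uniform in $m_0\in\{0,1\}$, which it is since both the triangle-inequality bound $m(z)\le(2+M)m(x)$ and the dyadic doubling estimate are insensitive to the shift by $m_0$.
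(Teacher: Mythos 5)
Your proof is correct, and the constants you track depend only on $C_\nu$, $M$ and $\s$ as required. Part~\ref{it-y-ge-M-x} and the upper bound in part~\ref{it-y-le-M-x} follow the paper's argument essentially verbatim: the integral lower bound from Lemma~\ref{lem-d(x,y)-with-int}, the substitution $s=m(t)$, restriction to a dyadic subinterval controlled by doubling, and the trivial chain combined with $\rho(y)\simeq\rho(x)$ from~\eqref{eq-rho(x)-comp-m(x)}.

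The lower bound in part~\ref{it-y-le-M-x} is where you diverge. The paper works with an arbitrary chain directly and splits according to whether the chain escapes radially: if $m(x_j)\le Mm(x)$ for every link then each $\rho(x_j)\simge\rho(x)$ and the sum dominates $\rho(x)d(x,y)$; if some $m(x_k)\ge Mm(x)$ then the partial sum dominates $\dhat(x,x_k)\simge\rho(x)m(x)$ by part~\ref{it-y-ge-M-x}, and one finishes with $d(x,y)\le(M+1)m(x)$. You instead reuse the already-proved Lemma~\ref{lem-m-M}, which reduces everything to bounding $\inf_{B(x,d(x,y))}\rho$ from below, and you do this by observing that any $z$ in that ball satisfies $m(z)\le(2+M)m(x)$, so $\rho(z)\simge\rho(x)$ by monotonicity or by~\eqref{eq-rho(x)-comp-m(x)}. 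Your route is more modular (the chain manipulation is quarantined inside Lemma~\ref{lem-m-M} and never repeated) and avoids invoking part~\ref{it-y-ge-M-x} inside the proof of part~\ref{it-y-le-M-x}; the paper's route avoids needing a pointwise estimate for $\rho$ on the whole metric ball $B(x,d(x,y))$, but in the end both hinge on the same fact that a chain from $x$ to $y$ cannot exploit small values of $\rho$ without first travelling a distance comparable to $d(x,y)$ through a region where $\rho\simge\rho(x)$. The only cosmetic omission is the trivial case $x=y$ (Lemma~\ref{lem-m-M} assumes $x\ne y$), which is immediate since both sides vanish.
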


\begin{proof}
\ref{it-y-ge-M-x}
Lemma~\ref{lem-d(x,y)-with-int} and the doubling property of $\nu$ yield
\begin{align*}
\dhat(x,y)
&\geq \int_{|x|}^{|y|} \rho \,dt
= \int_{|x|}^{|y|} \frac{1}{m(t) \nu(B_{m(t)})^{1/\s}} \,dt
= \int_{m(x)}^{m(y)} \frac{1}{s \nu(B_s)^{1/\s}} \,ds
\\
&\geq \int_{m(x)}^{M m(x)} \frac{1}{s \nu(B_s)^{1/\s}} \,ds
\simeq \frac{1}{\nu(B_{m(x)})^{1/\s}},
\end{align*}
which proves~\eqref{eq-dhat-ge-m(x)}.

\ref{it-y-le-M-x}
The $\simle$ inequality 
follows directly from \eqref{eq-rho(x)-comp-m(x)}
by choosing the trivial chain $x=x_0,x_1=y$.

For the $\simge$ inequality, let $x=x_0, x_1, \ldots, x_n=y$ be a chain.
If $m(x_j)\le M m(x)$ for all $j$, then $\rho(x_j) \simge \rho(x)$ 
by \eqref{eq-rho(x)-comp-m(x)}, and hence
\[
\sum_{j=1}^n (\rho(x_j)+\rho(x_{j-1})) d(x_j,x_{j-1})
  \simge \rho(x) \sum_{j=1}^n d(x_j,x_{j-1}) \ge \rho(x) d(x,y).
\]
On the other hand, if $m(x_k)\ge Mm(x)$ for some $k$, then~\eqref{eq-dhat-ge-m(x)} 
with $y=x_k$ implies that
\[
\sum_{j=1}^n (\rho(x_j)+\rho(x_{j-1}))d(x_j,x_{j-1})
\ge \dhat(x,x_k) \simge \rho(x) m(x).
\]
Since also 
\[
d(x,y)\le |x|+|y| \le(M+1)m(x), 
\]
the claim follows after 
taking the infimum over all chains from $x$ to $y$.
\end{proof}

\begin{lem}   \label{lem-Bhat-both-dhat}
There are constants $c_0>0$ and $a_2\ge a_1>0$, 
depending only on $C_\nu$ and $\s$, such that 
if $x \in Z'$ and  $0<r\le c_0 \nu(B_{m(x)})^{-1/\s}$, then
\[
B\biggl(x,\frac{a_1 r}{\rho(x)}\biggr) \subset \Bhat(x,r) \subset B\biggl(x,\frac{a_2 r}{\rho(x)}\biggr) 
\quad \text{and} \quad
\rho(y) \simeq \rho(x) \quad \text{for all } y\in\Bhat(x,r).
\]
Moreover, 
\[
\nuhat(\Bhat(x,r)) 
\simeq \rho(x)^\s \nu \biggl( B\biggl(x,\frac{r}{\rho(x)}\biggr) \biggr).
\]
In this lemma the comparison constants depend only on $C_{\nu}$ and $\s$.
\end{lem}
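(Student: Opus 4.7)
The plan is to show that when $r$ is small relative to $\nu(B_{m(x)})^{-1/\s}$, the $\dhat$-ball $\Bhat(x,r)$ stays inside the region where $m(\,\cdot\,)\simeq m(x)$, so that $\rho$ is essentially constant there and Lemma~\ref{lem-d(x,y)-m(x)}\ref{it-y-le-M-x} applies to convert $\dhat$-distances into $d$-distances.

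First I will prove the inclusion ``$\rho$ is comparable to $\rho(x)$ on $\Bhat(x,r)$''. Fix $M>1$ to be chosen. By Lemma~\ref{lem-d(x,y)-m(x)}\ref{it-y-ge-M-x} there is a constant $c=c(C_\nu,\s,M)>0$ such that $\dhat(x,y)\ge c\nu(B_{m(x)})^{-1/\s}$ whenever $m(y)\ge Mm(x)$. Applying the same lemma with the roles of $x$ and $y$ swapped and using that $m(y)\le m(x)$ implies $\nu(B_{m(y)})^{-1/\s}\ge \nu(B_{m(x)})^{-1/\s}$, we also get $\dhat(x,y)\ge c\nu(B_{m(x)})^{-1/\s}$ whenever $m(y)\le m(x)/M$. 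Taking $c_0<c$, any $y\in\Bhat(x,r)$ must therefore satisfy $M^{-1}m(x)\le m(y)\le Mm(x)$, and hence $\rho(y)\simeq \rho(x)$ by~\eqref{eq-rho(x)-comp-m(x)}. In particular this argument rules out $y=\binfty$, so $\Bhat(x,r)\subset Z$.

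Next I will establish the two inclusions. For the outer inclusion, if $y\in\Bhat(x,r)$ then $\rho(y)\simeq\rho(x)$ by the previous step, and so $m(y)\simeq m(x)$. Lemma~\ref{lem-d(x,y)-m(x)}\ref{it-y-le-M-x} then gives $r>\dhat(x,y)\simge \rho(x)d(x,y)$, yielding $d(x,y)\le a_2 r/\rho(x)$ for a suitable $a_2$. For the inner inclusion, if $d(x,y)<a_1 r/\rho(x)$ with $a_1$ small, then $d(x,y)<a_1c_0 m(x)$ (since $r/\rho(x)\le c_0 m(x)$), so $m(y)\simeq m(x)$ as long as $a_1 c_0$ is suitably small, and then Lemma~\ref{lem-d(x,y)-m(x)}\ref{it-y-le-M-x} gives $\dhat(x,y)\simle \rho(x)d(x,y)<r$, provided $a_1$ is chosen small relative to the implicit constants. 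Fixing compatible choices of $c_0$, $a_1$, $a_2$ and $M$ (all depending only on $C_\nu$ and $\s$) completes this step; the main bookkeeping obstacle is making sure the four constants are chosen consistently with one another.

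Finally, the measure estimate follows easily: since $\rho\simeq\rho(x)$ on $\Bhat(x,r)$,
\[
\nuhat(\Bhat(x,r))=\int_{\Bhat(x,r)}\rho^\s\,d\nu \simeq \rho(x)^\s\nu(\Bhat(x,r)),
\]
and the sandwich inclusions just obtained, combined with Lemma~\ref{lem-comp-close-balls} and the doubling property of $\nu$, give $\nu(\Bhat(x,r))\simeq\nu(B(x,r/\rho(x)))$. The only real obstacle throughout the argument is the circular-looking interplay between the constants $c_0$, $M$, $a_1$, $a_2$; once $M$ is fixed first and $c_0$ is chosen after, the rest cascades.
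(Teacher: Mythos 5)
Your proposal is correct and follows essentially the same route as the paper's proof: first use Lemma~\ref{lem-d(x,y)-m(x)}\ref{it-y-ge-M-x} (in both directions) to force $\Bhat(x,r)$ into the annulus where $m(y)\simeq m(x)$ once $c_0$ is small, then apply Lemma~\ref{lem-d(x,y)-m(x)}\ref{it-y-le-M-x} to get the two-sided ball inclusions, and finally integrate $\rho^\s\simeq\rho(x)^\s$ over the sandwiched balls using the doubling property of $\nu$. The paper simply fixes $M=2$ from the outset, but the logic and the order in which the constants are chosen are the same.
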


\begin{proof}
First note that 
it follows from Lemma~\ref{lem-d(x,y)-m(x)}\ref{it-y-le-M-x}
with $M=2$ that there are $C_1\ge c_1>0$ such that
\begin{equation}  \label{eq-dhat-C_1-C_2}
c_1 \rho(x) d(x,y) \le \dhat(x,y) \le C_1 \rho(x) d(x,y)
\quad \text{when } \tfrac12m(x) \le m(y) \le 2m(x).
\end{equation}

Assume next that $y\in\Bhat(x,r)$.
If $m(y)\ge 2m(x)$ or $m(x)\ge 2m(y)$, then Lemma~\ref{lem-d(x,y)-m(x)}\ref{it-y-ge-M-x}
with $M=2$ implies that 
\[
\frac{1}{\nu(B_{m(x)})^{1/\s}} \simle \dhat(x,y) < r \le \frac{c_0}{\nu(B_{m(x)})^{1/\s}}.
\]
This leads to a contradiction if $c_0$ is sufficiently small.
Fix such a small constant $c_0>0$.
Then $\tfrac12m(x) \le m(y) \le 2m(x)$ and 
thus, by \eqref{eq-rho(x)-comp-m(x)},
\begin{equation} \label{eq-rhoy-rhox-in-lem}
\rho(y)\simeq \rho(x).
\end{equation}
It then follows from  \eqref{eq-dhat-C_1-C_2} that
\[
d(x,y) \le \frac{\dhat(x,y)}{c_1\rho(x)} < \frac{r}{c_1\rho(x)},
\]
showing that the second inclusion in the lemma holds when $a_2=1/c_1$.

On the other hand, if $d(x,y)< r/2c_0\rho(x)$, then 
\[
d(x,y)< \frac{r}{2c_0\rho(x)}  
\le \frac{c_0 \nu(B_{m(x)})^{-1/\s}}{2c_0\rho(x)} = \tfrac12 m(x).
\]
It thus follows that 
\begin{equation}        \label{eq-m(x)+-d(x,y)}
m(y) \le m(x) + d(x,y) < \tfrac32 m(x)
\quad \text{and}  \quad 
m(y) \ge m(x) - d(x,y) > \tfrac12 m(x).
\end{equation}
Letting $a_1= 1/{\max\{2c_0,C_1\}}$
and using \eqref{eq-dhat-C_1-C_2} we see that
\[
\dhat(x,y) \le C_1 \rho(x) d(x,y) < r
\quad \text{for } y \in B\biggl(x,\frac{a_1 r}{\rho(x)}\biggr),
\]
which proves the first part of the lemma.
The estimate for $\nuhat(\Bhat(x,r))$ then follows from 
\eqref{eq-rhoy-rhox-in-lem} together with
the doubling property of $\nu$.
\end{proof}

\section{The doubling property of the deformed measure}
\label{sect-with-uniform}

\emph{Recall the standing assumptions from the beginning of 
Sections~\ref{sect-prelim} and~\ref{sect-both}.
In this section we also
assume that $\rho$ is given by \eqref{eq-special-rho}
and that the complete metric space
$Z$ is uniformly perfect at $b$ with constant $\ka>1$ for radii $r\ge m_0$.
If $m_0=1$, we also assume that $Z$ is unbounded.
}


\begin{lem} \label{lem-Zhat-bdd}
\begin{enumerate}
\item \label{Z-a}
If $m_0=1$,   then $(Z,\dhat)$ is bounded.
\item \label{Z-b}
If $m_0=0$, then $(Z',\dhat)=(Z \setm \{b\},\dhat)$ is unbounded.
\end{enumerate}
\end{lem}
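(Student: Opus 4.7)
The plan is to translate each statement into an integrability condition on $\rho$ near $\infty$ (respectively near $0$) and to verify that condition from the reverse-doubling consequence Lemma~\ref{lem-measure-growth} (respectively from a trivial upper bound on $\nu(B_t)$). For part~\ref{Z-a} I apply the characterization in Proposition~\ref{prop-bounded-onepoint}\ref{a-bdd}\eqv\ref{a-int}, whose hypotheses ($(Z,d)$ unbounded and uniformly perfect at large scales at $b$, and $\rho(0)<\infty$) are exactly the standing assumptions with $m_0=1$. For part~\ref{Z-b} the standing hypothesis only gives uniform perfectness at $b$ for all radii and does \emph{not} require $Z$ to be bounded; since the $m_0=0$ case also covers the inversion setting (Remark~\ref{rmk-inversions}) where $Z$ can be unbounded, I prefer a direct argument via Lemma~\ref{lem-d(x,y)-with-int} rather than invoking Proposition~\ref{prop-unbounded-complete}.

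\textbf{Part \ref{Z-a}.} It suffices to show $\int_1^\infty \rho(t)\,dt < \infty$. After the substitution $s=t+1$ this integral equals $\int_2^\infty ds/(s\,\nu(B_s)^{1/\s})$. Since $Z$ is unbounded, $R_\infty=\infty$, so Lemma~\ref{lem-measure-growth} applies for all $s>2$ with $r=2$ and $R=s$, yielding constants $\alpha,\Lambda>0$ (depending only on $C_\nu$ and $\ka$) and hence some $c>0$ such that $\nu(B_s) \ge c s^\alpha$. The integrand is then dominated by a constant multiple of $s^{-1-\alpha/\s}$, which is integrable on $[2,\infty)$, so Proposition~\ref{prop-bounded-onepoint} gives boundedness of $(Z,\dhat)$.

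\textbf{Part \ref{Z-b}.} Since $\diam Z > 0$, fix any $b_0 \in Z\setm\{b\}$. Using uniform perfectness at $b$ iteratively, I choose a sequence $\{x_j\}\subset Z\setm\{b\}$ with $|x_j|\to 0$. By Lemma~\ref{lem-d(x,y)-with-int}, for all sufficiently large $j$,
\[
\dhat(x_j,b_0) \ge \int_{|x_j|}^{|b_0|} \rho(t)\,dt = \int_{|x_j|}^{|b_0|} \frac{dt}{t\,\nu(B_t)^{1/\s}} \ge \frac{1}{\nu(B_{|b_0|})^{1/\s}} \int_{|x_j|}^{|b_0|} \frac{dt}{t},
\]
using the doubling (hence monotonicity) bound $\nu(B_t)\le\nu(B_{|b_0|})$ for $0<t\le|b_0|$. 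The right-hand side tends to $\infty$ as $j\to\infty$, so $(Z\setm\{b\},\dhat)$ is unbounded. No step is genuinely difficult; the only subtlety is recognizing that part~\ref{Z-b} must cover both the bounded-$Z$ flattening case and the unbounded-$Z$ inversion case, which is why the direct estimate above is preferable to invoking Proposition~\ref{prop-unbounded-complete} (which assumes $Z$ bounded).
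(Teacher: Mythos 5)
Your proof is correct and takes essentially the same route as the paper's: part~\ref{Z-a} reduces to $\int_1^\infty\rho\,dt<\infty$ via Proposition~\ref{prop-bounded-onepoint} together with the growth estimate of Lemma~\ref{lem-measure-growth}, and part~\ref{Z-b} derives unboundedness from the divergence of $\int_0\rho\,dt$ combined with Lemma~\ref{lem-d(x,y)-with-int} and uniform perfectness at $b$. Your observation that one should not invoke Proposition~\ref{prop-unbounded-complete} (since $Z$ need not be bounded when $m_0=0$) matches the paper, which likewise argues directly through Lemma~\ref{lem-d(x,y)-with-int}.
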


\begin{proof}
\ref{Z-a}
In this case, $Z$ is assumed to be unbounded.
Since Lemma~\ref{lem-measure-growth} implies that
 $\nu(B_t) \simge t^\alp \nu(B_1)$ for some $\al>0$ and all
$t \ge 1$, 
we have
\[
\int_1^\infty \rho\,dt 
\simle \int_1^\infty \frac{dt}{m(t)^{1+\alp/\s} \nu(B_1)^{1/\s}}
\le 
\frac{1}{\nu(B_1)^{1/\s}} \int_1^\infty \frac{dt}{t^{1+\alp/\s} } 
< \infty,
\]
and thus $(Z,\dhat)$ is bounded by Proposition~\ref{prop-bounded-onepoint}.

\ref{Z-b}
In this case, for any $0<\de<\diam Z$,
\[
    \int_0^{\de} \rho\,dt \ge 
\frac{1}{\nu(B_{\de})^{1/\s}}\int_0^{\de} \frac{dt}{t} 
= \infty,
\]
and thus $(Z',\dhat)$ is unbounded by Lemma~\ref{lem-d(x,y)-with-int} 
since $Z$ is uniformly perfect at $b$.
\end{proof}

\begin{lem} \label{lem-nuhat-m(x)}
For all\/ $0<r<\tfrac12 R_\infty$, 
\[
\nuhat(\Zhat \setm B_r) \simeq \nuhat(\{y\in Z: |y|>r\})
\simeq m(r)^{-\s}, 
\]
where the comparison constants depend only on $C_\nu$, $\ka$ and $\s$.
In particular,
\[
\nuhat(\Zhat)\simeq
  \begin{cases}
  1, & \text{if }m_0=1, \\
 \infty, & \text{if }m_0=0. 
 \end{cases}
\]
\end{lem}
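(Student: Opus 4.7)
The plan is to prove both estimates by showing each side is comparable to $m(r)^{-\s}$, with implicit constants depending only on $C_\nu$, $\ka$ and $\s$. Since $\nuhat(\{\binfty\})=0$ by construction, we have $\nuhat(\Zhat\setm B_r)=\nuhat(\{y\in Z:|y|\ge r\})$. The first $\simeq$ in the lemma will then follow once I establish that both $\nuhat(\{|y|\ge r\})$ and $\nuhat(\{|y|>r\})$ are comparable to $m(r)^{-\s}$, since the only difference between these two quantities is the mass of the sphere $\{|y|=r\}$, which will be dominated.

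For the upper bound, I would decompose
\[
\{y\in Z:|y|\ge r\}=\bigcup_{k=0}^{\infty} A_k, \qquad A_k:=\{y\in Z:2^k m(r)\le m(y)<2^{k+1}m(r)\},
\]
using that $|y|\ge r\Longleftrightarrow m(y)\ge m(r)$. On each $A_k$, \eqref{eq-rho(x)-comp-m(x)} and the doubling of $\nu$ give $\rho(y)^\s\simeq \bigl((2^k m(r))^\s\nu(B_{2^k m(r)})\bigr)^{-1}$, while $A_k\subset B_{2^{k+1}m(r)}$ together with doubling yields $\nu(A_k)\simle \nu(B_{2^k m(r)})$. Multiplying these and summing the geometric series (convergent because $\s>0$) produces
\[
\nuhat(\{|y|\ge r\})\simle \sum_{k=0}^\infty (2^k m(r))^{-\s}\simeq m(r)^{-\s}.
\]

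For the lower bound on $\nuhat(\{|y|>r\})$, I would exhibit one ball of sufficient mass. Set $R:=\max(2r,m_0)$; then $R\ge m_0$ and $R<R_\infty$ (using $r<\tfrac12R_\infty$ when $m_0=0$, and $R_\infty=\infty$ from Lemma~\ref{lem-Zhat-bdd}\ref{Z-a} when $m_0=1$), so uniform perfectness at $b$ provides a point $y^*$ with $R\le|y^*|<\ka R$, hence $|y^*|\simeq m(r)$. Take $B^*:=B(y^*,m(r)/4)$. A short case check ($R=2r$ vs.\ $R=m_0=1$) shows that $|y|>r$ for every $y\in B^*$. On $B^*$, \eqref{eq-rho(x)-comp-m(x)} gives $\rho^\s\simeq (m(r)^\s\nu(B_{m(r)}))^{-1}$, and Lemma~\ref{lem-comp-close-balls} applied to $B^*$ and $B_{m(r)}$ yields $\nu(B^*)\simeq \nu(B_{m(r)})$. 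Integrating gives $\nuhat(\{|y|>r\})\ge \nuhat(B^*)\simge m(r)^{-\s}$.

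For the ``in particular'' statement: if $m_0=0$, letting $r\to 0^+$ gives $\nuhat(\Zhat)\ge \nuhat(\Zhat\setm B_r)\simge r^{-\s}\to\infty$; if $m_0=1$, choosing $r=1$ gives the lower bound $\nuhat(\Zhat)\simge 2^{-\s}\simeq 1$, while the upper bound follows from the upper estimate via $r\to 0^+$ plus $\nuhat(B_r)\le \rho(0)^\s\nu(B_r)\le \nu(B_1)/\nu(B_1)=1$. The main technical obstacle is the quantitative case analysis needed to ensure $B(y^*,m(r)/4)\subset\{|y|>r\}$; the choice $R=\max(2r,m_0)$ rather than just $2r$ is precisely what handles the regime $m_0=1$ with small $r$, where $m(r)$ and $r$ are not comparable.
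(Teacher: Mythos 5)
Your proof is correct, and while it rests on the same basic ingredients as the paper's (an annular decomposition on which $\rho$ is essentially constant, the doubling property, and uniform perfectness at $b$ for radii $r\ge m_0$), the execution is organized differently. The paper proves both bounds simultaneously on each $(4\ka)$-adic annulus $B_{4\ka r_j}\setm B_{r_j}$ for $r>m_0$, using the reverse-doubling estimate \eqref{eq-M} to bound each annulus from below, and then handles $0<r\le m_0=1$ by a separate reduction to $\nuhat(\Zhat)\simeq1$. You instead decouple the two bounds: the upper bound comes from a dyadic sum in the variable $m(y)$ that uses only doubling and works uniformly for all $r>0$ and both values of $m_0$, while the lower bound comes from a single well-placed ball $B(y^*,m(r)/4)$ produced by one application of uniform perfectness at radius $R=\max(2r,m_0)$. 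This buys you a cleaner argument with no case split on $r\lessgtr m_0$ in the main estimate (the choice of $R$ absorbs it), at the cost of a small quantitative check that the ball stays inside $\{|y|>r\}$; the paper's per-annulus lower bound is slightly more information than needed but feeds directly into its geometric series. Both routes yield the stated dependence of the constants on $C_\nu$, $\ka$ and $\s$, and your treatment of the ``in particular'' statement matches the paper's.
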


\begin{proof}
To start with, assume that $r>m_0$ and let $r_j=(4 \ka)^jr$ for $j=0,1,\dots$\,. 
From \eqref{eq-rho(x)-comp-m(x)}, 
\eqref{eq-M} and the doubling property of $\nu$ 
we get that 
\begin{align*}
\nuhat(\Zhat \setm B_r)
&\simeq \sum_{j=0}^\infty \rho(r_j)^{\s} \nu( B_{4\ka r_{j}}\setm B_{r_j})
\simeq \sum_{r_j<\frac12 R_\infty} \rho(r_j)^{\s} \nu( B_{4\ka r_{j}}\setm B_{r_j})
\\
&\simeq \sum_{r_j<\frac12 R_\infty}  \frac{\nu( B_{r_{j}})}{m(r_j)^\s \nu( B_{m(r_j)})}
\simeq \sum_{r_j<\frac12 R_\infty}  m(r_j)^{-\s} \simeq m(r)^{-\s},
\end{align*}
where we have also used that 
\[
r_j \le m(r_j) = r_j+m_0 < 2r_j \quad \text{and hence} \quad \nu(B_{r_j}) \simeq \nu(B_{m(r_j)}).
\] 

For $m_0=1\ge r>0$, we then get from the above 
(because $\rho(0) = \nu(B_1)^{-1/\s}$ and $R_\infty=\infty$ in this case) 
and from the doubling property of $\nu$ that
\begin{align*}
m(2)^{-\s}
&\simeq \nuhat(\Zhat \setm B_2)
\leq \nuhat(\Zhat \setm B_r) \\
& \leq \nuhat(\Zhat)=
\nuhat(B_2) + \nuhat(\Zhat \setm B_2)
\simle 1 + m(2)^{-\s} \simeq 1,
\end{align*}
and thus $\nuhat(\Zhat \setm B_r) \simeq \nuhat(\Zhat) \simeq 1
\simeq m(r)^{-\s}$.

Finally, if $m_0=0$, then
\[
\nuhat(\Zhat)
\ge \lim_{r \to 0\limplus} \nuhat(\Zhat \setm B_r)
\simeq \lim_{r \to 0\limplus} m(r)^{-\s}
= \infty.
\qedhere
\]
\end{proof}

\begin{lem}  \label{lem-dhat-le-m(x)}
Let $x,y \in Z'$ with $|x| \le |y|$. 
Then
\[
   \dhat(x,y) \simle \frac{1}{\nu(B_{m(x)})^{1/\s}},
\]
where the comparison constant depends only on $C_{\nu}$, $\ka$ and $\s$.
\end{lem}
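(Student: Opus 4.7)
The plan is to invoke Lemma~\ref{lem-sari} to control $\dhat(x,y)$ by an integral of $\rho$, and then to exploit the measure growth from Lemma~\ref{lem-measure-growth} to bound this integral by $1/\nu(B_{m(x)})^{1/\s}$.

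For the main case $|x|\ge m_0$, Lemma~\ref{lem-sari} gives $\dhat(x,y)\simle\int_{|x|/\ka}^{|y|}\rho\,dt$. After the substitution $s=t+m_0$ this becomes $\int_{r_0}^{R_0}\frac{ds}{s\nu(B_s)^{1/\s}}$, where $r_0=|x|/\ka+m_0$ and $R_0=|y|+m_0$. A short calculation with $m_0\in\{0,1\}$ shows $m(x)/\ka\le r_0\le m(x)$, and hence doubling gives $\nu(B_{r_0})\simeq\nu(B_{m(x)})$. Also $R_0<2R_\infty$ (trivially when $m_0=1$ since then $R_\infty=\infty$, and since $R_0=|y|\le R_\infty$ when $m_0=0$). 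Then Lemma~\ref{lem-measure-growth} yields $\nu(B_s)^{-1/\s}\simle(r_0/s)^{\al/\s}\nu(B_{r_0})^{-1/\s}$ for $s\in[r_0,R_0]$. The factor $s^{-1-\al/\s}$ is integrable on $[r_0,\infty)$, so the integral collapses to a constant multiple of $1/\nu(B_{r_0})^{1/\s}\simeq1/\nu(B_{m(x)})^{1/\s}$.

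The main obstacle is the remaining case $m_0=1$ with $|x|<1$, where Lemma~\ref{lem-sari} is unavailable. Here $m(x)\in[1,2)$, so doubling gives $\nu(B_{m(x)})\simeq\nu(B_1)$ and $\rho(x)\le\nu(B_{m(x)})^{-1/\s}$ (using $m(x)\ge1$). If $|y|\le\ka$, the trivial two-point chain already gives $\dhat(x,y)\le2\rho(x)(|x|+|y|)\simle\nu(B_{m(x)})^{-1/\s}$. If instead $|y|>\ka$, uniform perfectness at scale $r=1$ (applicable since $y\notin B(b,1)$) produces some $x'$ with $1\le|x'|<\ka\le|y|$. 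The triangle inequality $\dhat(x,y)\le\dhat(x,x')+\dhat(x',y)$ then reduces to the trivial-chain estimate for $\dhat(x,x')$ together with the main case applied to $(x',y)$, where $\nu(B_{m(x')})^{-1/\s}\simeq\nu(B_{m(x)})^{-1/\s}$ since $m(x')\in[2,1+\ka]$.
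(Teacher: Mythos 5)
Your proof is correct and follows essentially the same route as the paper: the main case $|x|\ge m_0$ via Lemma~\ref{lem-sari} combined with the reverse-doubling estimate of Lemma~\ref{lem-measure-growth} to make the integral of $\rho$ collapse to $\nu(B_{m(x)})^{-1/\s}$, and the residual case $m_0=1$, $|x|<1$ via the trivial two-point chain plus a pivot point supplied by uniform perfectness. The minor differences (explicit substitution $s=t+m_0$, using $\rho(x)$ in place of $\rho(0)$, and $\nu(B_{m(x')})\simeq\nu(B_{m(x)})$ instead of monotonicity) are immaterial.
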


\begin{proof}
Assume first that $|x| \geq m_0$.
Note that $m(x)\simeq m(|x|/\ka)$ and so $\nu(B_{m(x)}) \simeq \nu(B_{m(|x|/\ka)})$ by the doubling property of $\nu$.
We then have by Lemmas~\ref{lem-sari} and~\ref{lem-measure-growth},
\begin{align}
\dhat(x,y)
&\simle \int_{|x|/\ka}^{|y|} \rho(t) \,dt \nonumber\\
&\leq \int_{|x|/\ka}^{R_\infty} \frac{dt}{m(t) \nu(B_{m(t)})^{1/\s}} \nonumber \\
&\simle \frac{1}{\nu(B_{m(|x| / \ka)})^{1/\s}} \int_{|x|/\ka}^{R_\infty} 
           \biggl(\frac{m(|x| / \ka)}{m(t)} \biggr)^{\al/\s} \frac{dt}{m(t)}  \nonumber \\
&\simeq \frac{ m(|x| / \ka)^{\alpha / \s}}{\nu(B_{m(x)})^{1/\s}} 
             \int_{|x|/\ka}^{R_\infty} 
             \frac{dt}{(m_0+t)^{1+\alpha / \s}}
             \nonumber \\
& \leq \frac{\s}{\al \nu(B_{m(x)})^{1/\s}}.
\label{eq-est-dhat-int}
\end{align}

Now let $|x| < m_0 = 1$.
If $|y| \leq \ka$, then
\begin{equation*}
\dhat(x,y)
\leq 2 \rho(0) d(x,y)
< \frac{2(\ka + 1)}{\nu(B_1)^{1/\s}}
\leq \frac{2(\ka+1) C_{\nu}^{1/\s}}{\nu(B_{m(x)})^{1/\s}}.
\end{equation*}
On the other hand if $|y| > \ka$, then by the uniform perfectness there exists $z \in Z$ such that $1 \leq |z| \leq \ka$ and therefore using~\eqref{eq-est-dhat-int} with $x$ replaced by $z$,
\begin{equation*}
\dhat(x,y)
\leq \dhat(x,z) + \dhat(z,y)
\simle \frac{1}{\nu(B_{m(x)})^{1/\s}} + \frac{1}{\nu(B_{m(z)})^{1/\s}}
\leq \frac{2}{\nu(B_{m(x)})^{1/\s}}.
\qedhere
\end{equation*}

\end{proof}

\begin{cor}   \label{cor-d(x,infty)-m(x)}
Let $M>1$.
Then there are constants $C_2\ge c_2>0$ 
\textup{(}depending only on $C_\nu$, $\ka$, $M$ and $\s$\textup{)}
such that for all $x,y \in Z'$ with $m(y)\ge Mm(x)$,
\begin{equation} \label{eq-d(x,infty)-m(x)}
\frac{c_2}{\nu(B_{m(x)})^{1/\s}}   \le  
\dhat(x,y) \le \frac{C_2}{\nu(B_{m(x)})^{1/\s}}.  
\end{equation}  

In particular, if $m_0 = 1$, then
\begin{equation} \label{eq-C_2-binfty}
\frac{c_2}{\nu(B_{m(x)})^{1/\s}}   \le  
\dhat(x,\binfty) \le \frac{C_2}{\nu(B_{m(x)})^{1/\s}}
\quad \text{for all }  x \in Z'.  
\end{equation}
\end{cor}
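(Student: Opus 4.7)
The plan is to read off the corollary directly from the two preceding lemmas, and then handle the $\binfty$ statement by a completion argument.

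First, for the lower bound in \eqref{eq-d(x,infty)-m(x)}, I would apply Lemma~\ref{lem-d(x,y)-m(x)}\ref{it-y-ge-M-x} verbatim: the hypothesis $m(y) \ge M m(x)$ is exactly what that lemma assumes, so it already yields $\dhat(x,y) \simge 1/\nu(B_{m(x)})^{1/\s}$, with implicit constant $c_2$ depending only on $C_\nu$, $M$ and $\s$. For the upper bound, since $m(t)=t+m_0$ is strictly increasing, the assumption $m(y)\ge M m(x) > m(x)$ gives $|y| \ge |x|$, so Lemma~\ref{lem-dhat-le-m(x)} applies and produces $\dhat(x,y) \simle 1/\nu(B_{m(x)})^{1/\s}$ with constant $C_2$ depending only on $C_\nu$, $\ka$ and $\s$. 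Taking the max/min over the hypotheses gives the dependence on $C_\nu$, $\ka$, $M$, $\s$ advertised in the statement.

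For the special case \eqref{eq-C_2-binfty}, note that when $m_0=1$ the space $(Z,\dhat)$ is bounded by Lemma~\ref{lem-Zhat-bdd}\ref{Z-a}, so by Proposition~\ref{prop-bounded-onepoint} the completion $\Zhat$ adds exactly the point $\binfty$, characterized by \eqref{eq-binfty}. Fix $x\in Z'$ and choose any sequence $\{y_j\}_{j=1}^\infty$ in $Z$ with $|y_j|\to\infty$ (available because $Z$ is unbounded). Then $\dhat(y_j,\binfty)\to 0$, hence by the triangle inequality $\dhat(x,y_j)\to \dhat(x,\binfty)$. Since $m(y_j)=|y_j|+1\to\infty$, we have $m(y_j) \ge M m(x)$ for all sufficiently large $j$, so \eqref{eq-d(x,infty)-m(x)} applies to the pair $(x,y_j)$. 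Passing to the limit $j\to\infty$ in those inequalities (whose bounds are independent of $j$) yields \eqref{eq-C_2-binfty} with the same constants $c_2,C_2$.

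There is no substantive obstacle here; the corollary is essentially a bookkeeping consequence of Lemmas~\ref{lem-d(x,y)-m(x)} and~\ref{lem-dhat-le-m(x)} together with the identification of $\binfty$ via Proposition~\ref{prop-bounded-onepoint}. The only mild care needed is to observe that $m(y)\ge M m(x)$ with $M>1$ forces $|y|\ge|x|$ so that Lemma~\ref{lem-dhat-le-m(x)} is applicable, and to note that the limit procedure used to transfer \eqref{eq-d(x,infty)-m(x)} to $\binfty$ relies on the continuity of $\dhat(x,\cdot)$ on the completion $\Zhat$.
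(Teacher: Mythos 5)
Your proof is correct and follows essentially the same route as the paper: the two-sided bound in \eqref{eq-d(x,infty)-m(x)} is read off from Lemma~\ref{lem-d(x,y)-m(x)}\ref{it-y-ge-M-x} and Lemma~\ref{lem-dhat-le-m(x)}, and \eqref{eq-C_2-binfty} is obtained by letting $y\to\binfty$. The only (immaterial) difference is that the paper justifies the passage to $\binfty$ by citing the uniform perfectness of $(\Zhat,\dhat)$ at $\binfty$ from Proposition~\ref{prop-unif-at-infty}, whereas you construct an explicit sequence with $|y_j|\to\infty$ via Proposition~\ref{prop-bounded-onepoint}; both are valid.
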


\begin{proof}
The first part follows directly from Lemmas~\ref{lem-d(x,y)-m(x)}\ref{it-y-ge-M-x}
and \ref{lem-dhat-le-m(x)}.
The last part then follows by letting $y \to \binfty$, which is possible since 
$(\Zhat,\dhat)$ is uniformly perfect at $\binfty$ when $m_0=1$, 
by Proposition~\ref{prop-unif-at-infty}.
\end{proof}

\begin{lem} \label{lem-d(x,y)-le-iff-m(x)}
Let $x,y \in Z'$, $M>0$ and $C_0>0$. 
If $m(x) \le M m(y)$, then 
\begin{equation}    \label{eq-dhat-le-nu}
   \dhat(x,y) \simle 
\frac{1}{\nu(B_{m(x)})^{1/\s}},
\end{equation} 
with the comparison constant depending only on $C_{\nu}$, $\ka$, $M$ and~$\s$.

Conversely, if $\dhat(x,y) \le C_0 \nu(B_{m(x)})^{-1/\s}$, then  $m(x) \simle m(y)$,
where the comparison constant depends only on $C_0$, $C_{\nu}$, $\ka$ and~$\s$.
\end{lem}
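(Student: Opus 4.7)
The plan is to derive both parts directly from earlier lemmas, with a short case split for the forward direction and a contrapositive for the converse.

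For the forward inequality~\eqref{eq-dhat-le-nu}, assume $m(x)\le M m(y)$. If $|x|\le|y|$, then Lemma~\ref{lem-dhat-le-m(x)} gives the bound immediately. Otherwise $|y|<|x|$, so $m(y)\le m(x)\le M m(y)$; applying Lemma~\ref{lem-dhat-le-m(x)} with $x$ and $y$ interchanged yields $\dhat(x,y)\simle 1/\nu(B_{m(y)})^{1/\s}$, and the doubling of $\nu$ (via Lemma~\ref{lem-comp-close-balls}, since the balls $B_{m(x)}$ and $B_{m(y)}$ share the center $b$ and have comparable radii) replaces $\nu(B_{m(y)})$ by $\nu(B_{m(x)})$ up to a constant depending only on $M$ and $C_\nu$.

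For the converse, I argue contrapositively. Fix a constant $M'\ge 2$ to be chosen. Suppose $m(x)\ge M'm(y)$. Then Lemma~\ref{lem-d(x,y)-m(x)}\ref{it-y-ge-M-x}, applied with the roles of $x$ and $y$ swapped, gives
\[
\dhat(x,y) \simge \frac{1}{\nu(B_{m(y)})^{1/\s}}.
\]
The decisive quantitative gain comes from Lemma~\ref{lem-measure-growth}:
\[
\frac{\nu(B_{m(y)})}{\nu(B_{m(x)})} \le \La \biggl(\frac{m(y)}{m(x)}\biggr)^{\al} \le \La (M')^{-\al},
\]
leading to
\[
\dhat(x,y) \simge \frac{(M')^{\al/\s}}{\La^{1/\s}}\,\frac{1}{\nu(B_{m(x)})^{1/\s}}.
\]
Choosing $M'$ large enough (depending only on $C_0$, $C_\nu$, $\ka$ and $\s$) so that this lower bound exceeds $C_0/\nu(B_{m(x)})^{1/\s}$ contradicts the hypothesis $\dhat(x,y)\le C_0/\nu(B_{m(x)})^{1/\s}$, whence $m(x)<M'm(y)$, proving $m(x)\simle m(y)$.

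The only point demanding care is verifying the applicability of Lemma~\ref{lem-measure-growth}, which needs $m_0\le m(y)<m(x)<2R_\infty$. The middle inequality follows from $M'>1$. The condition $m_0\le m(y)$ is automatic: when $m_0=0$ we have $m(y)=|y|\ge0$, and when $m_0=1$ we have $m(y)\ge1=m_0$. The upper bound $m(x)<2R_\infty$ also holds in every case: when $m_0=1$ the standing assumption forces $R_\infty=\infty$, and when $m_0=0$ we have $m(x)=|x|\le R_\infty<2R_\infty$ even in the bounded regime. The only degenerate case is $m_0=1$ with $|x|=0$, where $m(x)=1\le m(y)$ holds trivially and the converse needs no argument.
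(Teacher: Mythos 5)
Your proof is correct and follows essentially the same route as the paper: the forward direction via Lemma~\ref{lem-dhat-le-m(x)} plus the doubling property, and the converse via Lemma~\ref{lem-d(x,y)-m(x)}\ref{it-y-ge-M-x} combined with Lemma~\ref{lem-measure-growth}. The paper argues the converse directly (from the two-sided bound $\nu(B_{m(x)})\simle\nu(B_{m(y)})$ it reads off $m(x)\simle m(y)$ via Lemma~\ref{lem-measure-growth}) rather than by contradiction with a threshold $M'$, but the content is identical.

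One point in your contrapositive needs to be stated more carefully. When you invoke Lemma~\ref{lem-d(x,y)-m(x)}\ref{it-y-ge-M-x} under the assumption $m(x)\ge M'm(y)$, you must apply it with the \emph{fixed} parameter $M=2$ --- which is legitimate precisely because you required $M'\ge 2$ --- and not with $M=M'$. The implicit constant in that lemma depends on $M$, and inspecting its proof shows the lower bound behaves like $\log M$ divided by a doubling factor that grows like a power of $M$; in general it therefore \emph{degrades} as $M$ grows. If the constant in your bound $\dhat(x,y)\simge \nu(B_{m(y)})^{-1/\s}$ were allowed to depend on $M'$, the final step of ``choosing $M'$ large enough'' would be circular, since enlarging $M'$ could weaken the very lower bound you are trying to exploit. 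With $M=2$ fixed, that constant depends only on $C_\nu$ and $\s$, the gain $(M')^{\al/\s}$ supplied by Lemma~\ref{lem-measure-growth} is genuine, and your argument closes. Your verification of the hypotheses of Lemma~\ref{lem-measure-growth} (including $m_0\le m(y)<m(x)<2R_\infty$ and the degenerate case $m_0=1$, $|x|=0$) is careful and correct.
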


\begin{proof}
If $m(x)\le m(y)$ then \eqref{eq-dhat-le-nu} follows directly 
from Lemma~\ref{lem-dhat-le-m(x)}.
If instead $m(y) \le m(x) \le M m(y)$, then 
$\nu(B_{m(y)}) \simeq \nu(B_{m(x)})$ by 
the doubling property of $\nu$ and so
Lemma~\ref{lem-dhat-le-m(x)} with $x$ and $y$ interchanged gives
\[
\dhat(x,y) \simle \frac{1}{\nu(B_{m(y)})^{1/\s}} \simeq \frac{1}{\nu(B_{m(x)})^{1/\s}}.
\]

Conversely, assume that $\dhat(x,y) \le C_0 \nu(B_{m(x)})^{-1/\s}$.
If $m(x) \le 2m(y)$, then there is nothing to show. 
On the other hand, if $m(x) \ge 2m(y)$, then Lemma~\ref{lem-d(x,y)-m(x)}\ref{it-y-ge-M-x},
with $M=2$ and the roles of $x$ and $y$ interchanged, 
implies that
\[
\frac{1}{\nu(B_{m(y)})^{1/\s}}
\simle \dhat(x,y) \simle 
\frac{1}{\nu(B_{m(x)})^{1/\s}},
\]
which together with Lemma~\ref{lem-measure-growth} gives that $m(x) \simle m(y)$.
\end{proof}

\begin{prop} \label{prop-nuhat-comp-r}
Let $0 < c_0 \le  C_0$.
Assume that $x \in Z'$ and
\[
\frac{c_0}{\nu(B_{m(x)})^{1/\s}} \le r \le \frac{C_0}{\nu(B_{m(x)})^{1/\s}}.
\]
Then
\begin{equation} \label{eq-nuhat-Bhat}
\nuhat(\Bhat(x,r)) \simeq m(x)^{-\s},
\end{equation}
with comparison constants depending 
only on $c_0$, $C_0$, $C_{\nu}$, $\ka$ and $\s$.
\end{prop}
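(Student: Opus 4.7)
For the lower bound $\nuhat(\Bhat(x,r))\simge m(x)^{-\s}$, let $c_{0,L}$ denote the constant $c_0$ of Lemma~\ref{lem-Bhat-both-dhat}, and set $r'=\min\{r,\,c_{0,L}\nu(B_{m(x)})^{-1/\s}\}$. Since $\min\{c_0,c_{0,L}\}\nu(B_{m(x)})^{-1/\s}\le r'\le c_{0,L}\nu(B_{m(x)})^{-1/\s}$, we have $r'\simeq \nu(B_{m(x)})^{-1/\s}$, and Lemma~\ref{lem-Bhat-both-dhat} applies to give
\[
\nuhat(\Bhat(x,r'))\simeq \rho(x)^\s\,\nu\biggl(B\biggl(x,\frac{r'}{\rho(x)}\biggr)\biggr).
\]
Substituting~\eqref{eq-special-rho} gives $r'/\rho(x)=r'\,m(x)\,\nu(B_{m(x)})^{1/\s}\simeq m(x)$, and since $d(x,b)=|x|\le m(x)$, Lemma~\ref{lem-comp-close-balls} yields $\nu(B(x,r'/\rho(x)))\simeq \nu(B_{m(x)})$. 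Hence
\[
\nuhat(\Bhat(x,r))\ge \nuhat(\Bhat(x,r'))\simge \rho(x)^\s \nu(B_{m(x)})=m(x)^{-\s}.
\]

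For the upper bound, I would invoke the converse direction of Lemma~\ref{lem-d(x,y)-le-iff-m(x)}: every $y\in \Bhat(x,r)\cap Z'$ satisfies $\dhat(x,y)<r\le C_0\nu(B_{m(x)})^{-1/\s}$, and therefore $m(x)\le K\,m(y)$ for some constant $K$ depending only on $C_0$, $C_\nu$, $\ka$ and $\s$. Equivalently, $|y|\ge R:=\max\{0,\,m(x)/K-m_0\}$. Because $\Zhat\setm Z'$ is either empty (if $m_0=0$) or reduces to $\{\binfty\}$ with $\nuhat(\{\binfty\})=0$, this gives $\nuhat(\Bhat(x,r))\le \nuhat(\Zhat\setm B_R)$. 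In the main case $0<R<\tfrac12 R_\infty$, Lemma~\ref{lem-nuhat-m(x)} produces $\nuhat(\Zhat\setm B_R)\simeq m(R)^{-\s}=(m(x)/K)^{-\s}\simeq m(x)^{-\s}$, as required.

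The two edge cases are handled by monotonicity of $\nuhat$. If $R=0$, then necessarily $m_0=1$ (since $|x|>0$ forces $R>0$ when $m_0=0$) and $m(x)\le K$, so $m(x)\simeq 1$, and Lemma~\ref{lem-nuhat-m(x)} gives $\nuhat(\Bhat(x,r))\le \nuhat(\Zhat)\simeq 1\simeq m(x)^{-\s}$. If $R\ge \tfrac12 R_\infty$, this can occur only when $m_0=0$ and $Z$ is $d$-bounded; then $|x|\simeq R_\infty$, $m(x)\simeq R_\infty$, and Lemma~\ref{lem-nuhat-m(x)} applied at the sub-critical radius $R_\infty/4$ gives $\nuhat(\Bhat(x,r))\le \nuhat(\Zhat\setm B_{R_\infty/4})\simeq R_\infty^{-\s}\simeq m(x)^{-\s}$. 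The main obstacle is not depth but bookkeeping: verifying that every comparison constant propagates correctly through the three cases and depends only on $c_0$, $C_0$, $C_\nu$, $\ka$ and $\s$; the underlying calculations amount to two short applications of the preceding lemmas at the critical scale $\nu(B_{m(x)})^{-1/\s}$.
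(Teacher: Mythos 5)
Your proof is correct and follows essentially the same route as the paper's: the upper bound via the converse part of Lemma~\ref{lem-d(x,y)-le-iff-m(x)} followed by Lemma~\ref{lem-nuhat-m(x)} (with the same case split on whether $R$ degenerates), and the lower bound by locating a ball of $d$-radius $\simeq m(x)$ inside $\Bhat(x,r)$. The only cosmetic difference is that you obtain the lower bound by truncating $r$ to the range of Lemma~\ref{lem-Bhat-both-dhat} and citing that lemma, whereas the paper re-derives the inclusion $B(x,a_0m(x))\subset\Bhat(x,r)$ directly; both rest on the same computation.
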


Proposition~\ref{prop-nuhat-comp-r} holds for any $c_0>0$,
but will  sometimes be used with
the constant $c_0$ given by Lemma~\ref{lem-Bhat-both-dhat}.

\begin{proof}
Let $y \in \Bhat(x,r)$.
Then 
\[
\dhat(x,y)<r \le \frac{C_0}{\nu(B_{m(x)})^{1/\s}}
\]
and Lemma~\ref{lem-d(x,y)-le-iff-m(x)} implies that there is $M>2$ 
such that $m(x) \le M m(y)$.

Let us first consider the case $m(x)>Mm_0$. 
Then $|y| \ge m(x)/M-m_0 =: r_0>0$ and therefore $\Bhat(x,r)\subset \Zhat \setm B_{r_0}$.
As $M>2$ and $m(x)\le R_\infty+m_0$, we see that now $0<r_0<\tfrac12 R_\infty$.
Lemma~\ref{lem-nuhat-m(x)} gives that
\begin{equation*}
\nuhat(\Bhat(x,r)) \le \nuhat(\Zhat \setm B_{r_0})
\simeq m(r_0)^{-\s} =  \biggl( \frac{m(x)}{M} \biggr)^{-\s}.
\end{equation*}

On the other hand, if $m(x)\le Mm_0$, then $m_0=1$, because $m(x) > 0$ for every $x \in Z'$. 
Therefore, by Lemma~\ref{lem-nuhat-m(x)},
\[
\nuhat(\Bhat(x,r)) \le \nuhat(\Zhat) 
\simeq 1 \simeq m(x)^{-\s},
\]
which completes the proof of the $\simle$ inequality in~\eqref{eq-nuhat-Bhat}.

For the $\simge$ inequality,
Lemma~\ref{lem-d(x,y)-m(x)}\ref{it-y-le-M-x} with $M=2$ 
implies that there is $C_1$ such that
\begin{equation}  \label{eq-dhat-C_2}
\dhat(x,y) \le C_1 \rho(x) d(x,y) 
\quad \text{whenever} \quad
\tfrac12 m(x) \le m(y) \le 2m(x).
\end{equation}
Let $a_0 := \min \{ c_0/C_1, \tfrac{1}{2}\}$ and consider points $y$ with
 $d(x,y)<a_0m(x)$.
Then as in \eqref{eq-m(x)+-d(x,y)} we have
\[
\tfrac{1}{2} m(x) \leq m(y) \leq \tfrac{3}{2} m(x)
\]
and therefore \eqref{eq-dhat-C_2}, 
the definition of $\rho$ and the assumption of this proposition imply that
\[
\dhat(x,y) \leq C_1 \rho(x) d(x,y) < C_1 \rho(x) a_0 m(x) 
\leq \frac{c_0}{\nu(B_{m(x)})^{1/\s}} \leq r.
\]
Thus $B(x,a_0m(x)) \subset \Bhat(x,r) $.
Since also $\rho(y) \simeq \rho(x)$ by \eqref{eq-rho(x)-comp-m(x)},
we get from Lemma~\ref{lem-comp-close-balls} and the doubling property of $\nu$ that
\begin{align*}
\nuhat(\Bhat(x,r))
&\geq \nuhat(B(x,a_0m(x))) \\
&\simeq \rho(x)^\s \nu(B(x,a_0m(x)))
\simeq \rho(x)^\s \nu(B_{m(x)})
= m(x)^{-\s}.
\qedhere
\end{align*}
\end{proof}

To prove that the doubling property of the measure is preserved under our transformations, it is convenient to introduce the following ``inverse'' of $\nu$,
\begin{equation}    \label{eq-def-nuinv}
\nuinv(t) = \sup \{r \geq 0 : \nu(B_r) \le t\} = \inf \{r > 0 : \nu(B_r) > t\}
\end{equation}
for $0 \le t<\nu(Z)$.
It follows immediately from the regularity of $\nu$ that
\begin{equation}
\label{eq-nu-nuinv}
\nu(B_{\nu^{-1}(t)}) = \sup_{0\le r < \nu^{-1}(t)} \nu(B_r)
\le t 
\quad \text{for all  } 0 \leq t < \nu(Z),
\end{equation}
and consequently,
\begin{equation}
\label{eq-nuinv-iff}
r > \nuinv(t) 
\quad \text{if and only if} \quad  
\nu(B_r)  > t.
\end{equation}
In particular, the supremum in~\eqref{eq-def-nuinv} is always attained.

\begin{lem} \label{lem-nuinv-2-new}
Let $M \geq 1$.
Then for all $0 \leq t_1 \leq t_2 \leq M t_1$, such that $\nu(B_{m_0}) \le t_1 \leq t_2 < \nu(Z)$,
\begin{equation}
\label{eq-comp-nuinv-2t}
\nuinv(t_2)
\simle \nuinv(t_1)
\le \nuinv(t_2),
\end{equation}
with the comparison constant depending only on $C_{\nu}$, $\ka$ and $M$.
In particular if $\nu(B_{m_0}) \le t < \tfrac{1}{2} \nu(Z)$, then $\nuinv(2t) \simeq \nuinv(t)$.
\end{lem}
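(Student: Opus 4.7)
The plan is to split the claim into the trivial monotonicity inequality and the substantive reverse-doubling-type inequality, then specialize to $t_2=2t_1$ for the ``in particular'' statement.

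First, I would observe that the right inequality $\nuinv(t_1)\le\nuinv(t_2)$ is immediate from the definition~\eqref{eq-def-nuinv}: since $t_1\le t_2$, the set $\{r\ge0:\nu(B_r)\le t_1\}$ is contained in $\{r\ge0:\nu(B_r)\le t_2\}$, so the suprema are ordered.

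For the main inequality $\nuinv(t_2)\simle\nuinv(t_1)$, set $r_i=\nuinv(t_i)$ and assume $r_1<r_2$ (otherwise there is nothing to prove). The key observation is that by~\eqref{eq-nuinv-iff}, $\nu(B_r)>t_1$ for every $r>r_1$, while by~\eqref{eq-nu-nuinv}, $\nu(B_{r_2})\le t_2$. Since $\nu(B_{m_0})\le t_1$, the value $m_0$ belongs to $\{r\ge0:\nu(B_r)\le t_1\}$, so $r_1\ge m_0$. Moreover $r_2\le R_\infty<2R_\infty$, because otherwise $B_{r_2}=Z$ and $\nu(B_{r_2})=\nu(Z)>t_2$, contradicting $\nu(B_{r_2})\le t_2<\nu(Z)$. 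For any $0<\eps<r_2-r_1$ we then have $m_0\le r_1+\eps<r_2<2R_\infty$, so Lemma~\ref{lem-measure-growth} applies and yields
\begin{equation*}
t_1 < \nu(B_{r_1+\eps})
\le \Lambda \Bigl(\frac{r_1+\eps}{r_2}\Bigr)^{\al} \nu(B_{r_2})
\le \Lambda \Bigl(\frac{r_1+\eps}{r_2}\Bigr)^{\al} t_2
\le \Lambda M \Bigl(\frac{r_1+\eps}{r_2}\Bigr)^{\al} t_1.
\end{equation*}
Letting $\eps\to 0\limplus$ gives $r_2/r_1\le(\Lambda M)^{1/\al}$, which is the desired inequality with a constant depending only on $C_\nu$, $\ka$ and $M$ (since $\Lambda$ and $\al$ depend only on $C_\nu$ and $\ka$).

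A small edge case to dispatch is $r_1=0$: this forces $m_0=0$, and by continuity of $\nu$ from above applied to $\bigcap_{r>0}B_r=\{b\}$ (using $\nu(\{b\})=0$ from Lemma~\ref{lem-measure-growth}) we get $t_1=0$, hence $t_2=0$ and $r_2=0=r_1$, contradicting $r_1<r_2$. The ``in particular'' statement is then the special case $t_1=t$, $t_2=2t$, $M=2$, where the hypothesis $t<\tfrac12\nu(Z)$ ensures $t_2<\nu(Z)$. The only mildly delicate step is the limiting argument $\eps\to 0\limplus$ and the verification that $r_1\ge m_0$ and $r_2<2R_\infty$ both hold so that Lemma~\ref{lem-measure-growth} is actually applicable; the rest is a direct chain of inequalities.
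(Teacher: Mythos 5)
Your proposal is correct and follows essentially the same route as the paper: monotonicity of $\nuinv$ for the easy inequality, then Lemma~\ref{lem-measure-growth} applied to $B_{r_1+\eps}$ and $B_{r_2}$ (after checking $r_1\ge m_0$ and $r_2\le R_\infty$) and letting $\eps\to0$. The only cosmetic difference is that you divide by $t_1$ and dispatch the $t_1=0$ case separately, whereas the paper works with the ratio $t_1/t_2$, which sidesteps that edge case since $t_2\ge\nu(B_{r_2})>0$ whenever $r_1<r_2$.
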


\begin{proof}
The second inequality in~\eqref{eq-comp-nuinv-2t} is immediate, because $\nuinv$ is nondecreasing.
For the first inequality, let $r_1 = \nu^{-1}(t_1)$ and $r_2 = \nu^{-1}(t_2)$.
If $r_1=r_2$, there is nothing to prove, so assume that $r_1 < r_2$.

Let $\eps>0$ be such that $r_1+\eps<r_2$.
By \eqref{eq-nu-nuinv} we have $\nu(B_{r_2}) \le t_2 < \nu(Z)$ and thus $r_2 \le R_\infty$.
Moreover by the definition of $\nuinv(t_1)$
we have $r_1 \ge m_0$ and by \eqref{eq-nuinv-iff} we get $\nu(B_{r_1+\eps})>t_1$.
It thus follows from Lemma~\ref{lem-measure-growth} (with $\Lambda$ and $\alp$ as therein) that
\[
    \frac1M \leq \frac{t_1}{t_2} < \frac{\nu(B_{r_1+\eps})}{\nu(B_{r_2})} \le 
    \La \Bigl(\frac{r_1+\eps}{r_2}\Bigr)^\alp.
\]
Letting $\eps \to 0$ shows that
\begin{equation*}
\nuinv(t_2) = r_2 \le (\La M)^{1/\al} r_1 =
(\La M)^{1 / \alp} \nuinv(t_1).
\qedhere
\end{equation*}
\end{proof}

\begin{prop}
\label{prop-nuhat-large-r}
There is a constant $C'>1$ depending only on $C_{\nu}$, $\ka$ and $\s$ such that for all $x \in Z'$ and 
$r>C' \nu(B_{m(x)})^{-1/\s}$,
\begin{equation} \label{eq-nuhat-large-r}
\nuhat(\Bhat(x,r)) \simeq \begin{cases} 
     \nuinv(r^{-\s})^{-\s}, & \text{if } r\le c_2 \nu(B_{m_0})^{-1/\s}, \\
     \nuhat(Z), & \text{if } r\ge \tfrac12 c_2 \nu(B_{m_0})^{-1/\s}.
     \end{cases}
\end{equation}
with comparison constants depending only on $C_{\nu}$, $\ka$ and $\s$.
Here $c_2\le1$ is a constant from Corollary~\ref{cor-d(x,infty)-m(x)} with $M=2$.
\end{prop}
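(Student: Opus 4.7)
The plan is to sandwich $\Bhat(x,r) \cap Z$, up to multiplicative constants, between sets of the form $\{y \in Z : m(y) \ge CR\}$ and $\{y \in Z : m(y) \ge cR\}$ for appropriate $0<c\le C$, where $R := \nuinv(r^{-\s})$, and then convert these set inclusions into measure bounds via Lemma~\ref{lem-nuhat-m(x)}. The role of the constant $C'$ is to force $R$ to be much smaller than $m(x)$, so that the inner boundary of $\Bhat(x,r)$ is well-separated from $x$ and the ``annular'' region is genuinely outside $B_{m(x)}$.

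To begin, I would choose $C'$ large and note that, by Lemma~\ref{lem-measure-growth}, the hypothesis $r > C' \nu(B_{m(x)})^{-1/\s}$ (i.e.\ $\nu(B_R) \le r^{-\s} < \nu(B_{m(x)})/C'^\s$) forces $R \le m(x)/K$, with $K=K(C')$ arbitrarily large. In Case~1 the condition $r \le c_2 \nu(B_{m_0})^{-1/\s}$ additionally gives $r^{-\s} \ge \nu(B_{m_0})$, so $R \ge m_0$, and Lemma~\ref{lem-nuinv-2-new} is then applicable when comparing $\nuinv$ at values comparable to $r^{-\s}$.

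For the upper bound in Case~1, I would take $y \in \Bhat(x,r) \cap Z$ and show $m(y) \simge R$. If $m(y) \ge m(x)/2$ this follows from $R \le m(x)/K$; otherwise Lemma~\ref{lem-d(x,y)-m(x)}\ref{it-y-ge-M-x} (with the roles of $x,y$ swapped and $M=2$) gives $\dhat(x,y) \simge \nu(B_{m(y)})^{-1/\s}$, so $\nu(B_{m(y)}) \simge r^{-\s}$ and Lemma~\ref{lem-nuinv-2-new} yields $m(y) \simge R$. Thus $\Bhat(x,r) \cap Z \subset \{y \in Z : |y| \ge cR - m_0\}$ for some $c>0$, and Lemma~\ref{lem-nuhat-m(x)} gives $\nuhat(\Bhat(x,r)) \simle m(cR-m_0)^{-\s} \simeq R^{-\s}$. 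For the lower bound in Case~1, I would show $\{y \in Z : m(y) \ge CR\} \subset \Bhat(x,r)$ for a large $C$. When $m(y) \ge m(x)$, Lemma~\ref{lem-dhat-le-m(x)} gives $\dhat(x,y) \simle \nu(B_{m(x)})^{-1/\s} < r$ after possibly enlarging $C'$; when $CR \le m(y) < m(x)$, the same lemma with swapped roles gives $\dhat(x,y) \le C_0 \nu(B_{m(y)})^{-1/\s}$, and Lemma~\ref{lem-nuinv-2-new} provides $\nuinv(C_0^\s r^{-\s}) \simeq R$, so choosing $C$ with $CR \ge \nuinv(C_0^\s r^{-\s})$ forces $\nu(B_{m(y)}) \ge C_0^\s r^{-\s}$ by~\eqref{eq-nuinv-iff}, hence $\dhat(x,y) < r$. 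Lemma~\ref{lem-nuhat-m(x)} then yields $\nuhat(\Bhat(x,r)) \simge R^{-\s}$.

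Case~2 arises only for $m_0=1$; in this case $\nuhat(\Zhat) \simeq 1$ by Lemma~\ref{lem-nuhat-m(x)}, and Corollary~\ref{cor-d(x,infty)-m(x)} with $M=2$ bounds $\diam(\Zhat) \le 2C_2 \nu(B_1)^{-1/\s}$, making the upper bound $\nuhat(\Bhat(x,r)) \le \nuhat(\Zhat)$ immediate. For the lower bound I split on the size of $m(x)$: if $r_0 := \tfrac12 c_2 \nu(B_1)^{-1/\s}$ satisfies $r_0 > C' \nu(B_{m(x)})^{-1/\s}$, then Case~1 applied at $r_0$ yields $\nuhat(\Bhat(x,r_0)) \simeq \nuinv(r_0^{-\s})^{-\s} \simeq m_0^{-\s} = 1$, and monotonicity in $r$ finishes; otherwise $m(x) \simeq m_0$ by the doubling property, so $\nu(B_{m(x)})^{-1/\s} \simeq \nu(B_1)^{-1/\s}$ and choosing $C' \ge 2C_2$ forces $r > \diam(\Zhat)$, whence $\Bhat(x,r) = \Zhat$. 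The main obstacle is coordinating the choice of $C'$ to satisfy all the constraints from Steps~2--4 simultaneously, and handling the boundary behavior of Lemma~\ref{lem-nuhat-m(x)} when $cR-m_0$ degenerates to $0$ (though for $m_0=1$ we have $R_\infty=\infty$, so only the lower end is a genuine concern).
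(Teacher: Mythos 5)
Your overall strategy coincides with the paper's: trap $\Bhat(x,r)\cap Z$ between two super-level sets of $y\mapsto m(y)$ (the paper writes them as super-level sets of $y\mapsto\nu(B_{m(y)})$, which is the same thing via \eqref{eq-nuinv-iff}), then evaluate their $\nuhat$-measures with Lemma~\ref{lem-nuhat-m(x)} and compare radii with Lemma~\ref{lem-nuinv-2-new}. Your Case~1 is sound modulo the constant-coordination you flag yourself; one point worth making explicit is that in the upper bound the application of Lemma~\ref{lem-nuinv-2-new} needs the lower threshold to be $t_1=(c_2/r)^\s$ with precisely the constant $c_2$ from Corollary~\ref{cor-d(x,infty)-m(x)}, because the case hypothesis $r\le c_2\nu(B_{m_0})^{-1/\s}$ is exactly what guarantees $t_1\ge\nu(B_{m_0})$; your weaker observation that $r^{-\s}\ge\nu(B_{m_0})$ does not by itself put the relevant threshold into the admissible range of that lemma.

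The genuine gap is in the second sub-case of your Case~2. There you assume $r_0:=\tfrac12 c_2\nu(B_1)^{-1/\s}\le C'\nu(B_{m(x)})^{-1/\s}$ and claim that choosing $C'\ge 2C_2$ forces $r>\diam(\Zhat)$, whence $\Bhat(x,r)=\Zhat$. This does not follow. The hypothesis gives $r>C'\nu(B_{m(x)})^{-1/\s}$, and since $\nu(B_{m(x)})\ge\nu(B_1)$ this is weaker than $r>C'\nu(B_1)^{-1/\s}$. Quantitatively, the sub-case condition yields $\nu(B_{m(x)})\le(2C'/c_2)^\s\nu(B_1)$, hence only $r>C'\cdot\tfrac{c_2}{2C'}\,\nu(B_1)^{-1/\s}=\tfrac12 c_2\nu(B_1)^{-1/\s}$ --- a bound independent of $C'$ that in general does not exceed $\diam(\Zhat)$. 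Enlarging $C'$ cannot help, because the comparison constant between $\nu(B_{m(x)})$ and $\nu(B_1)$ in this sub-case itself grows with $C'$: the argument is circular. The conclusion is nevertheless true in this regime and the repair is short: here $m(x)\simeq m_0=1$, so by monotonicity and Proposition~\ref{prop-nuhat-comp-r} applied at the radius $C'\nu(B_{m(x)})^{-1/\s}<r$ one gets $\nuhat(\Bhat(x,r))\simge m(x)^{-\s}\simeq 1\simeq\nuhat(Z)$. (The paper sidesteps the split altogether by feeding the inner inclusion of the sandwich, at a threshold $t_0\simle\nu(B_{m_0})$, directly into \eqref{eq-nuinv-iff}, Lemma~\ref{lem-measure-growth} and Lemma~\ref{lem-nuhat-m(x)}.)
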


Note that $\nu(B_{m_0}) = \nu(\emptyset) = 0$ if $m_0=0$, in which case we 
use the interpretation
$\nu(B_{m_0})^{-1/\s} = \infty$. 
Thus the second case in~\eqref{eq-nuhat-large-r} is possible only if $m_0=1$.

\begin{proof}
Let $C_2 \ge c_2 >0$ be the constants
from Corollary~\ref{cor-d(x,infty)-m(x)} with $M=2$.
We can assume that $c_2\le1\le C_2$.
Define $C'=\max\{C_2 C_\nu^{1/\s},3C_1\}>1$, where $C_1$ is as in~\eqref{eq-dhat-C_2}.
We shall show that 
\begin{equation}    \label{eq-bound-Bhat-r1-r2}
\{ y\in Z : \nu(B_{m(y)}) > t_2\}
\subset \Bhat(x,r) \cap Z \subset  
\{ y\in Z : \nu(B_{m(y)}) > t_1\},
\end{equation}
where
\[
t_1 := \Bigl( \frac{c_2}{r} \Bigr)^\s 
\le t_2 := \biggl( \frac{C_2}{r} \biggr)^\s.
\]
To this end, let $y\in Z'$.
If $m(y)\ge2m(x)$, then by Corollary~\ref{cor-d(x,infty)-m(x)} 
and the assumption $r>C' \nu(B_{m(x)})^{-1/\s}$, we get that
\[
\dhat(x,y) \le \frac{C_2}{\nu(B_{m(x)})^{1/\s}} <r
\]
and so $y\in\Bhat(x,r)$. Similarly, if $\tfrac12 m(x) \le m(y) \le 2m(x)$, then
by \eqref{eq-dhat-C_2},
\[
\dhat(x,y) \le C_1\rho(x) d(x,y) \le \frac{C_1( m(x)+m(y))}{m(x) \nu(B_{m(x)})^{1/\s}}
\le \frac{3C_1}{\nu(B_{m(x)})^{1/\s}}<r
\]
and so $y\in\Bhat(x,r)$ also in this case.
Finally, if $2m(y) \le m(x)$ and $\nu(B_{m(y)}) > t_2= (C_2/r)^\s$, then
Corollary~\ref{cor-d(x,infty)-m(x)} with $x$ and $y$ interchanged gives
\[
\dhat(x,y) \le \frac{C_2}{\nu(B_{m(y)})^{1/\s}} < r, 
\quad \text{i.e.\ } y\in \Bhat(x,r).
\]
This proves the first inclusion in \eqref{eq-bound-Bhat-r1-r2}.

In order to prove the second inclusion in \eqref{eq-bound-Bhat-r1-r2}, 
let $y\in \Bhat(x,r)\cap Z$.
If $2m(y) \le m(x)$, then Corollary~\ref{cor-d(x,infty)-m(x)} with $M=2$ and the roles of $x$ and $y$ interchanged shows that
\[
r > \dhat(x,y) \ge \frac{c_2}{\nu(B_{m(y)})^{1/\s}}
\quad \text{and so} \quad
\nu(B_{m(y)}) > \Bigl( \frac{c_2}{r} \Bigr)^\s = t_1.
\]
On the other hand, if $2m(y)\ge m(x)$, then
the doubling property of $\nu$
and the assumption $r > C' \nu(B_{m(x)})^{-1/\s}$ yield
\[
C_\nu \nu(B_{m(y)}) \ge \nu(B_{m(x)}) > \biggl( \frac{C'}{r} \biggr)^\s
\ge C_\nu \biggl( \frac{C_2}{r} \biggr)^\s
\ge C_\nu \Bigl( \frac{c_2}{r} \Bigr)^\s,
\]
proving the second inclusion in \eqref{eq-bound-Bhat-r1-r2}.

Next we use Lemma~\ref{lem-nuhat-m(x)} to estimate the $\nuhat$-measure 
of the sets in \eqref{eq-bound-Bhat-r1-r2}.
For this we note that when $R_\infty<\infty$ (and thus $m_0=0$), 
we have by the doubling property of $\nu$ that
\[
t_1\le t_2 < \biggl( \frac{C_2}{C'} \biggr)^\s \nu(B_{m(x)})
\le \frac{\nu(B_{R_\infty})}{C_\nu}  \le \nu(B_{R_\infty/2})< \nu(Z).
\]
It then follows from~\eqref{eq-nuinv-iff}  that 
\begin{equation*}  
r_j:=\nuinv(t_j) < \tfrac12 R_\infty, \quad j=1,2,
\end{equation*}
while this is trivial when $R_\infty=\infty$.

We now distinguish two cases.
If $r \le c_2 \nu(B_{m_0})^{-1/\s}$, then $t_1\ge \nu(B_{m_0})$ 
and thus $r_1\ge m_0$, by \eqref{eq-nuinv-iff}.
The inclusions~\eqref{eq-bound-Bhat-r1-r2}, together with \eqref{eq-nuinv-iff} 
and Lemma~\ref{lem-nuhat-m(x)}, then imply that
\begin{equation*}   
\nuhat(\Bhat(x,r)) \le
\nuhat (\{y\in Z: m(y)>r_1\}) \le
\nuhat (Z\setm B_{r_1-m_0}) 
\simeq m(r_1-m_0)^{-\s} = r_1^{-\s},
\end{equation*}
and similarly
\begin{equation*}   
\nuhat(\Bhat(x,r)) \ge \nuhat (\{y\in Z: m(y)>r_2\}) 
\simeq m(r_2-m_0)^{-\s} = r_2^{-\s}.
\end{equation*}
Since $t_1\le r^{-\s} \le t_2$,
Lemma~\ref{lem-nuinv-2-new} shows that $r_1\simeq r_2 \simeq \nuinv(r^{-\s})$,
which proves the first estimate in \eqref{eq-nuhat-large-r}
for such $r$.

Next, if $r \ge \tfrac12 c_2 \nu(B_{m_0})^{-1/\s}$, then $m_0=1$ 
(and thus $(Z,d)$ is unbounded by assumption) and $t_2\le (2C_2/c_2)^\s\nu(B_{m_0})=: t_0$.
Hence $\nuinv(t_0)\ge m_0$ and by \eqref{eq-bound-Bhat-r1-r2},  \eqref{eq-nuinv-iff} 
and Lemma~\ref{lem-nuhat-m(x)},
\begin{align*}   
\nuhat (Z) &\ge
\nuhat(\Bhat(x,r)) \ge \nuhat (\{y\in Z: \nu(B_{m(y)}) > t_0\}) \\
&= \nuhat (\{y\in Z: m(y)>\nuinv(t_0)\})
\simeq m(\nuinv(t_0)-m_0)^{-\s} 
= \nuinv(t_0)^{-\s}.
\end{align*}
Since also $t_0 \simle \nu(B_{m_0})$, we conclude using
\eqref{eq-nuinv-iff} and Lemma~\ref{lem-measure-growth} that
$\nuinv(t_0) \simle m_0 = 1$, 
which together with $\nuhat(Z) \simeq 1$ from Lemma~\ref{lem-nuhat-m(x)}
implies the second estimate in \eqref{eq-nuhat-large-r}.
\end{proof}

\begin{thm}   \label{thm-doubl-m(x)}
The measure $\nuhat$ is doubling on $(Z',\dhat)$ and on its completion $(\Zhat,\dhat)$. 
The doubling constant $C_{\nuhat}$ depends only on $C_{\nu}$, $\ka$ and $\s$.
\end{thm}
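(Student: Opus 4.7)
The plan is to prove $\nuhat(\Bhat(x, 2r)) \simle \nuhat(\Bhat(x, r))$ for all $x \in \Zhat$ and $r > 0$, with a constant depending only on $C_{\nu}$, $\ka$, and $\s$. Since $\nuhat$ assigns zero mass to any ``extra'' point in $\Zhat \setm Z'$ --- namely $\binfty$ when $m_0 = 1$ and nothing at all when $m_0 = 0$ --- doubling at interior centers will propagate to $\Zhat$, leaving only the center $x = \binfty$ to be treated separately in the sphericalization case.

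For a fixed center $x \in Z'$, I will set $r_\star := \nu(B_{m(x)})^{-1/\s}$ and split into cases according to where $r$ falls relative to the thresholds $c_0 r_\star$ and $C' r_\star$ delimiting the three regimes established in Lemma~\ref{lem-Bhat-both-dhat}, Proposition~\ref{prop-nuhat-comp-r}, and Proposition~\ref{prop-nuhat-large-r}. In the small regime $2r \le c_0 r_\star$, Lemma~\ref{lem-Bhat-both-dhat} expresses $\nuhat(\Bhat(x, r))$ and $\nuhat(\Bhat(x, 2r))$ up to constants as $\rho(x)^\s \nu(B(x, r/\rho(x)))$ and $\rho(x)^\s \nu(B(x, 2r/\rho(x)))$ respectively, reducing doubling of $\nuhat$ to that of $\nu$. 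In the intermediate regime where $r, 2r$ both lie in $[c_0 r_\star/2, 2 C' r_\star]$, Proposition~\ref{prop-nuhat-comp-r} gives both volumes $\simeq m(x)^{-\s}$ and doubling is immediate.

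The large regime $r \ge C' r_\star$ will be handled with Proposition~\ref{prop-nuhat-large-r}. In its first subcase $2r \le c_2 \nu(B_{m_0})^{-1/\s}$ the measures are $\simeq \nuinv(r^{-\s})^{-\s}$ and $\simeq \nuinv((2r)^{-\s})^{-\s}$, which Lemma~\ref{lem-nuinv-2-new} --- applied with $t_1 = (2r)^{-\s}$, $t_2 = r^{-\s} = 2^\s t_1$, noting that $t_1 \ge \nu(B_{m_0})$ in this range (possibly after shrinking $c_2$, which is harmless) --- shows to be comparable. In the second subcase $r \ge \tfrac12 c_2 \nu(B_{m_0})^{-1/\s}$ (which forces $m_0 = 1$), both measures are $\simeq \nuhat(Z)$ and doubling is trivial. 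The straddling subcases will be automatic since at each common boundary the two applicable estimates coincide up to constants: at $r \simeq r_\star$ both the small- and intermediate-regime estimates evaluate to $\simeq m(x)^{-\s}$, and at $r \simeq \nu(B_{m_0})^{-1/\s}$ the $\nuinv$-based estimate reduces to $\simeq \nuhat(Z) \simeq 1$ via Lemma~\ref{lem-nuinv-2-new}.

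Finally, for $x = \binfty$ in the sphericalization case, Corollary~\ref{cor-d(x,infty)-m(x)} sandwiches $\Bhat(\binfty, r) \cap Z$ between superlevel sets of the form $\{y \in Z : \nu(B_{m(y)}) > c r^{-\s}\}$, which by \eqref{eq-nuinv-iff} are complements of balls $B_{\nuinv(c r^{-\s}) - m_0}$. Lemma~\ref{lem-nuhat-m(x)} will then give $\nuhat(\Bhat(\binfty, r)) \simeq \nuinv(r^{-\s})^{-\s}$ in the appropriate range (and $\simeq \nuhat(Z)$ for very large $r$), after which Lemma~\ref{lem-nuinv-2-new} again closes the doubling inequality. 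The main obstacle will be the careful bookkeeping of multiplicative constants across the three overlapping regimes, ensuring that the final doubling constant for $\nuhat$ truly depends only on $C_{\nu}$, $\ka$, and $\s$.
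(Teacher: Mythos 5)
Your proposal is correct and follows essentially the same three-regime case analysis as the paper's proof: small radii via Lemma~\ref{lem-Bhat-both-dhat} and the doubling of $\nu$, intermediate radii via Proposition~\ref{prop-nuhat-comp-r}, and large radii via Proposition~\ref{prop-nuhat-large-r} combined with Lemma~\ref{lem-nuinv-2-new} (with the overlap of the two cases in \eqref{eq-nuhat-large-r} absorbing the straddling situations, exactly as in the paper). The only divergence is at the completion step: the paper simply invokes Proposition~3.3 of Bj\"orn--Bj\"orn~\cite{BBnoncomp} to transfer the doubling constant from $(Z',\dhat)$ to $(\Zhat,\dhat)$, whereas you treat the center $\binfty$ directly via the sandwich from Corollary~\ref{cor-d(x,infty)-m(x)} and Lemma~\ref{lem-nuhat-m(x)}; your direct argument is valid and makes the proof self-contained at the cost of essentially redoing Proposition~\ref{prop-nuhat-large-r} for that one center.
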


\begin{proof}
Let $x \in Z'$.
If $0 < r \leq \tfrac12 c_0 \nu(B_{m(x)})^{-1/\s}$, where $c_0$ is the constant 
in Lemma~\ref{lem-Bhat-both-dhat}, then we get using 
Lemma~\ref{lem-Bhat-both-dhat} and
the doubling property of $\nu$ that
\[
\nuhat(\Bhat(x,2r)) \simeq \rho(x)^\s \nu \biggl( B\biggl(x,\frac{r}{\rho(x)}\biggr) \biggr)
\simeq \nuhat(\Bhat(x,r)).
\]
If $r>C' \nu(B_{m(x)})^{-1/\s}$, where $C'>1$ is as in 
Proposition~\ref{prop-nuhat-large-r},
then
\[
r^{-\s} < (C')^{-\s} \nu(B_{m(x)}) < \nu(Z).
\]
Hence,
by Proposition~\ref{prop-nuhat-large-r} and Lemma~\ref{lem-nuinv-2-new},
either
\[
\nuhat(\Bhat(x,2r)) 
\simeq \nuinv((2r)^{-\s})^{-\s}
\simeq \nuinv(r^{-\s})^{-\s}
\simeq \nuhat(\Bhat(x,r))
\]
(when $2r\le c_2\nu(B_{m_0})^{-1/\s}$)
or $\nuhat(\Bhat(x,2r)) \simeq \nuhat(Z)\simeq \nuhat(\Bhat(x,r))$.

Finally if $\tfrac12 c_0 \nu(B_{m(x)})^{-1/\s} \leq r \leq C' \nu(B_{m(x)})^{-1/\s}$, 
then Proposition~\ref{prop-nuhat-comp-r} yields that
\begin{equation*}
\nuhat(\Bhat(x,2r)) \simeq m(x)^{-\s}
\simeq \nuhat(\Bhat(x,r)).
\end{equation*}
Thus $\nuhat$ is doubling on $(Z',\dhat)$.
The same doubling constant can be used also on $(\Zhat,\dhat)$,
by Proposition~3.3 in Bj\"orn--Bj\"orn~\cite{BBnoncomp}.
\end{proof}

\section{Preservation of the Besov energy}
\label{sect-Besov-energy-both}

\emph{Recall the standing assumptions from the beginning of 
Sections~\ref{sect-prelim} and~\ref{sect-both}.}

\medskip

The aim of this section is to show that the transformation of the metric 
and the measure introduced in Section~\ref{sect-both} preserves 
the Besov (fractional Sobolev) energy~\eqref{eq-Besov-seminorm} 
if we choose $\s = p \theta$.
Recall that the metric $\dhat$ was defined in~\eqref{eq-def-dhat} 
and~\eqref{eq-def-m(x)}--\eqref{eq-special-rho} with 
\begin{equation}    \label{eq-recall-rho-m(x)}
\rho(x)= \frac{1}{m(x) \nu(B_{m(x)})^{1/\s}}
\quad \text{and} \quad  m(x)=|x|+m_0,
\end{equation}
and that 
\[
d\nuhat = \rho^\s\,d\nu
\]
with $\nuhat( \{ \binfty\} ) = 0$  if $\binfty \in \Zhat$.

\begin{thm}
\label{besov-preserved-both}
Assume that $Z$ is uniformly perfect at $b$ for radii $r\ge m_0$, 
with constant $\ka>1$, and equipped with a doubling measure $\nu$.
Let $1\le p<\infty$, $\theta>0$ and $\s=p\theta$.

Then for every measurable function $u$ on $Z$, the 
Besov energies~\eqref{eq-Besov-seminorm} 
with respect to $(Z,d,\nu)$ 
and $(\Zhat,\dhat,\nuhat)$ are comparable, 
with comparison constants depending only on $C_\nu$, $\ka$ and $\s$.
\end{thm}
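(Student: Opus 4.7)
The plan is to reduce the theorem to a pointwise comparison of integrands in the two Besov energies. Since $\nuhat(\{\binfty\})=0$ by definition (in the sphericalization case $m_0=1$) and $\nu(\{b\})=0$ by Lemma~\ref{lem-measure-growth} (in the flattening case $m_0=0$), both double integrals can be restricted to $Z'\times Z'$. Writing $d\nuhat=\rho^{p\theta}\,d\nu$ (which uses $\s=p\theta$), it suffices to show that for all $x,y\in Z'$ with $x\neq y$,
\begin{equation*}
\frac{1}{d(x,y)^{p\theta}\,\nu(B(x,d(x,y)))}
\simeq \frac{\rho(x)^{p\theta}\rho(y)^{p\theta}}{\dhat(x,y)^{p\theta}\,\nuhat(\Bhat(x,\dhat(x,y)))},
\end{equation*}
with comparison constants depending only on $C_\nu$, $\ka$, $\s$. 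Integrating $|u(x)-u(y)|^p$ against both sides and applying Tonelli's theorem then gives the claim. The comparison is verified by case analysis on the ratio $m(x)/m(y)$, fixing $M>2$ large enough at the outset (depending only on $C_\nu$, $\ka$, $\s$).

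In the \emph{balanced case} $M^{-1}m(x)\le m(y)\le M m(x)$, Lemma~\ref{lem-d(x,y)-m(x)}\ref{it-y-le-M-x} gives $\dhat(x,y)\simeq\rho(x)d(x,y)$ and \eqref{eq-rho(x)-comp-m(x)} gives $\rho(y)\simeq\rho(x)$. Since $d(x,y)\le|x|+|y|\simle m(x)$, this forces $\dhat(x,y)\simle\rho(x)m(x)=\nu(B_{m(x)})^{-1/\s}$. If in addition $\dhat(x,y)\le c_0\nu(B_{m(x)})^{-1/\s}$, then Lemma~\ref{lem-Bhat-both-dhat} yields $\nuhat(\Bhat(x,\dhat(x,y)))\simeq\rho(x)^{p\theta}\nu(B(x,d(x,y)))$, and substitution produces the comparison directly. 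In the remaining subcase $\dhat(x,y)\simeq\nu(B_{m(x)})^{-1/\s}$, one has $d(x,y)\simeq m(x)$, so Lemma~\ref{lem-comp-close-balls} (applied via $b\in B(x,2m(x))$) gives $\nu(B(x,d(x,y)))\simeq\nu(B_{m(x)})$, and Proposition~\ref{prop-nuhat-comp-r} gives $\nuhat(\Bhat(x,\dhat(x,y)))\simeq m(x)^{-p\theta}$; using the formula~\eqref{eq-recall-rho-m(x)} for $\rho$, both sides reduce to $1/(m(x)^{p\theta}\nu(B_{m(x)}))$.

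In the \emph{unbalanced case}, say $m(y)>Mm(x)$, I would combine Lemma~\ref{lem-d(x,y)-m(x)}\ref{it-y-ge-M-x} with Lemma~\ref{lem-dhat-le-m(x)} to obtain $\dhat(x,y)\simeq\nu(B_{m(x)})^{-1/\s}$, and Proposition~\ref{prop-nuhat-comp-r} to get $\nuhat(\Bhat(x,\dhat(x,y)))\simeq m(x)^{-p\theta}$. The hypothesis $m(y)>Mm(x)$ forces $|y|\ge(M-1)m(x)$, so $d(x,y)\ge|y|-|x|\simeq m(y)$ and $d(x,y)\le|x|+|y|\simle m(y)$; as $b\in B(x,d(x,y))$, doubling of $\nu$ gives $\nu(B(x,d(x,y)))\simeq\nu(B_{m(y)})$. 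Plugging \eqref{eq-recall-rho-m(x)} into $\rho(x)^{p\theta}\rho(y)^{p\theta}$ and simplifying, both kernels collapse to $1/(m(y)^{p\theta}\nu(B_{m(y)}))$.

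The \emph{main obstacle} is bookkeeping: selecting the correct regime (Lemma~\ref{lem-Bhat-both-dhat} for small radii versus Proposition~\ref{prop-nuhat-comp-r} for radii comparable to $\nu(B_{m(x)})^{-1/\s}$) when computing $\nuhat(\Bhat(x,\dhat(x,y)))$, and establishing the transition estimate $\nu(B(x,d(x,y)))\simeq\nu(B_{m(y)})$ in the unbalanced case. The choice $\s=p\theta$ is essential: it is precisely what causes the two factors of $\rho^{p\theta}$ arising from $d\nuhat(x)\,d\nuhat(y)$ to cancel the deformation of $\dhat(x,y)^{p\theta}\nuhat(\Bhat(x,\dhat(x,y)))$ against $d(x,y)^{p\theta}\nu(B(x,d(x,y)))$; any other exponent would leave a residual scale-dependent factor and destroy the comparability.
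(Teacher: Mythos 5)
Your proposal is correct and follows essentially the same route as the paper's proof: the same reduction to the pointwise kernel comparison \eqref{eq-prove-comp} on $Z'\times Z'$, the same case split on $m(x)$ versus $m(y)$ (with the balanced case subdivided according to whether $\dhat(x,y)$ is below or comparable to $\nu(B_{m(x)})^{-1/\s}$), and the same supporting results --- your combination of Lemma~\ref{lem-d(x,y)-m(x)}\,\ref{it-y-ge-M-x} with Lemma~\ref{lem-dhat-le-m(x)} in the unbalanced case is exactly Corollary~\ref{cor-d(x,infty)-m(x)}, which the paper invokes there. The only detail worth making explicit is the reduction to $|x|\le|y|$, which requires noting that $\nu(B_{xy})\simeq\nu(B_{yx})$ and $\nuhat(\Bhat_{xy})\simeq\nuhat(\Bhat_{yx})$ by the doubling properties and Lemma~\ref{lem-comp-close-balls}, since the kernels are not symmetric as written.
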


\begin{proof}
It suffices to consider what happens to the part of the integrand in the 
Besov 
energy~\eqref{eq-Besov-seminorm} that depends on the metric or on the measure when we transform them.
More precisely, the comparability of the 
Besov energy after the transformation 
will follow if we can show that for every $x,y\in Z'$ with $x \neq y$,
\begin{equation}   \label{eq-prove-comp}
\frac{\rho(x)^\s\rho(y)^\s}{\dhat(x,y)^{\s}\nuhat(\Bhat_{xy})}
\simeq 
\frac{1}{d(x,y)^{\s}\nu(B_{xy})},
\end{equation}
where
\[
B_{xy} := B(x,d(x,y)) \quad \text{and}  \quad \Bhat_{xy} := \Bhat(x,\dhat(x,y)). 
\]
(It suffices to consider $x,y\in Z'$, because $\nuhat(\{\binfty\}) = 0$ 
(if $\binfty \in \Zhat$),
and if $Z' \neq Z$ then $m_0=0$ and therefore $\nu(\{b\}) = 0$.)
By the doubling properties of $\nu$ and $\nuhat$, see Theorem~\ref{thm-doubl-m(x)},
together with Lemma~\ref{lem-comp-close-balls}, we have 
\[
\nu(B_{xy}) \simeq \nu(B_{yx})   \quad \text{and}  \quad 
\nuhat(\Bhat_{xy}) \simeq \nuhat(\Bhat_{yx}). 
\]
Therefore by symmetry, it is enough to consider points with 
$|x| \le |y|$.
We now consider several cases.

{\bf Case 1:}  $m(x) \le m(y) \le 2m(x)$. 
Then by \eqref{eq-rho(x)-comp-m(x)} and Lemma~\ref{lem-d(x,y)-m(x)}\ref{it-y-le-M-x}
with $M=2$,
\begin{equation}    \label{eq-comp-d-dhat}
\rho(y) \simeq \rho(x) 
\quad \text{and} \quad 
\dhat(x,y) \simeq \rho(x) d(x,y).
\end{equation}
Hence, in order to show \eqref{eq-prove-comp}, it is enough to show that
\begin{equation}   \label{eq-prove-comp-2}
\frac{\rho(x)^\s}{\nuhat(\Bhat_{xy})} 
\simeq 
\frac{1}{\nu(B_{xy})}.
\end{equation}

{\bf Case 1a:}
$\dhat(x,y)\le c_0 \nu(B_{m(x)})^{-1/\s} $,
where $c_0>0$ is as in Lemma~\ref{lem-Bhat-both-dhat}.
Then
Lemma~\ref{lem-Bhat-both-dhat}, the doubling property of $\nu$ and \eqref{eq-comp-d-dhat}
imply that
\[
\nuhat(\Bhat_{xy}) 
\simeq \rho(x)^\s \nu \biggl( B\biggl(x,\frac{\dhat(x,y)}{\rho(x)}\biggr) \biggr)
\simeq \rho(x)^\s \nu(B_{xy}),
\]
i.e.\ \eqref{eq-prove-comp-2} holds.

{\bf Case 1b:}
$\dhat(x,y)\ge c_0 \nu(B_{m(x)})^{-1/\s} $.
In this case, we have by \eqref{eq-recall-rho-m(x)} and \eqref{eq-comp-d-dhat} that
\[
d(x,y) \simeq \frac{\dhat(x,y)}{\rho(x)}  
= m(x)\nu(B_{m(x)})^{1/\s} \dhat(x,y)
\ge c_0 m(x).
\]
Hence also
\[
m(x) \simle d(x,y) \le m(x) + m(y) \le 3m(x)
\]
and consequently,
\[
\dhat(x,y) \simeq \rho(x) d(x,y) \simeq \rho(x) m(x) =
\frac{1}{\nu(B_{m(x)})^{1/\s}}.
\]
Proposition~\ref{prop-nuhat-comp-r} then shows that
\[
\nuhat(\Bhat_{xy}) \simeq m(x)^{-\s},
\]
while Lemma~\ref{lem-comp-close-balls}
and~\eqref{eq-recall-rho-m(x)} 
imply that
\[
\nu(B_{xy}) \simeq \nu(B_{m(x)}) = (\rho(x) m(x))^{-\s}
\simeq \frac{\nuhat(\Bhat_{xy})}{\rho(x)^\s},
\]
i.e.\ \eqref{eq-prove-comp-2} holds.

{\bf Case 2:}  $m(y) \ge 2 m(x)$. 
In this case we have by Corollary~\ref{cor-d(x,infty)-m(x)} that
\[
\dhat(x,y) \simeq 
\frac{1}{\nu(B_{m(x)})^{1/\s}}= \rho(x) m(x)
\]
and hence by 
Proposition~\ref{prop-nuhat-comp-r},
\[
\nuhat(\Bhat_{xy}) \simeq m(x)^{-\s}.
\]
Since also 
\begin{equation}   \label{eq-1/2-d(x,y)-3/2}
d(x,y) \ge m(y) -m(x) \ge \tfrac12 m(y) 
\quad \text{and} \quad
d(x,y) \le m(y)+m(x) \le \tfrac32 m(y),
\end{equation}
the doubling property of $\nu$ and Lemma~\ref{lem-comp-close-balls} imply that 
\[
\nu(B_{xy}) \simeq \nu(B_{yx}) \simeq \nu(B_{m(y)}).
\]
It therefore follows that
\[
\frac{\rho(x)^\s\rho(y)^\s}{\dhat(x,y)^{\s}\nuhat(\Bhat_{xy})}
\simeq \frac{\rho(x)^\s\rho(y)^\s}{(\rho(x) m(x))^\s m(x)^{-\s}}
= \frac{1}{m(y)^\s \nu(B_{m(y)})}
\simeq \frac{1}{d(x,y)^{\s}\nu(B_{xy})}.\qedhere
\]
\end{proof}

\section{Duality of sphericalization and flattening} 
\label{sect-duality}

In this section, we show that flattening and sphericalization are inverse processes.
In Section~\ref{sect-flatt-sp} we consider the flattening of a sphericalized space, while in Section~\ref{sect-sp-flatt} we consider the reverse order.

Note that in this section we do \emph{not} require $\s = p \theta$ (as we did in Section~\ref{sect-Besov-energy-both}).

\subsection{The flattening of a sphericalized space}
\label{sect-flatt-sp}

\emph{In this section we assume that $(Z,d,\nu)$ is an unbounded complete metric space  
equipped with a doubling measure $\nu$ with a doubling constant $C_{\nu}$. 
We also assume that $Z$ is uniformly perfect at large scales at a base point $b$ with constant $\ka > 1$. 
}

\medskip

We denote the sphericalization of the space $(Z,d,\nu)$ by $(\Zhat, \dhat, \nuhat)$ as defined in Section~\ref{sect-both} with the function $m(x)=|x|+1$ and $\Zhat = Z \cup \{ \binfty \}$.
By Proposition~\ref{prop-unif-at-infty} and Lemma~\ref{lem-Zhat-bdd}, $(\Zhat,\dhat)$ is bounded and uniformly perfect at the point $\binfty$.
Note that the doubling constant in~\eqref{eq-doubl-rho} can be chosen so that $A\le 2C_\nu^{1/\s}$.
Moreover, $\rho(0)/\rho(1) \le 2 C_\nu^{1/\s}$ and thus 
the uniform perfectness constant $\kahat$ of $(\Zhat,\dhat)$ 
(from Proposition~\ref{prop-unif-at-infty}) only depends on $C_{\nu}$, $\ka$ and $\s$.
By Theorem~\ref{thm-doubl-m(x)} the measure $\nuhat$ is doubling in $(\Zhat,\dhat)$ 
with a doubling constant depending only on $C_{\nu}$, $\ka$ and $\s$.

Now let $(Z, \dtilde, \nutilde)$ be the flattening of the sphericalization 
$(\Zhat,\dhat,\nuhat)$ of $(Z, d, \nu)$.
The flattening is done at the base point $\binfty$ as in Section~\ref{sect-both} 
by using the gauge function $\mhat(x)=\dhat(x,\binfty)$.
Observe that the flattening removes $\binfty$ and thus gives the space $Z$ back.
Let us denote by $\rho$ the metric density function used in the sphericalization and by $\rhohat$ the metric density function used in the flattening, i.e.
\begin{equation} \label{eq-special-rho-flat-sp}
 \rho(x)=  \frac{1}{m(x)\nu(B_{m(x)})^{1/\s}}
\quad \text{and} \quad
 \rhohat(x)
= \frac{1}{\dhat(x,\binfty) \nuhat (\Bhat(\binfty,\dhat(x,\binfty)))^{1/\s}}.
\end{equation}

\begin{lem} \label{lem-rho-rhohat-new}
Let $x \in Z$. Then $\rho(x) \rhohat(x) \simeq 1$. 
In particular, $d \nutilde (x) \simeq d \nu (x)$. 
Here, the comparison constants depend only on $C_\nu$, $\ka$ and $\s$.
\end{lem}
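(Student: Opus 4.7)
The plan is to reduce everything to the estimate \eqref{eq-C_2-binfty} in Corollary~\ref{cor-d(x,infty)-m(x)} and to Lemma~\ref{lem-nuhat-m(x)}, which together control both factors in $\rhohat(x)$ in terms of the original measure $\nu$ and the gauge $m(x)=|x|+1$. First, I would record the immediate consequence of \eqref{eq-C_2-binfty}, namely that
\[
    \dhat(x,\binfty) \simeq \frac{1}{\nu(B_{m(x)})^{1/\s}},
\]
with comparison constants depending only on $C_\nu$, $\ka$ and $\s$. This takes care of the first factor of $\rhohat(x)$ in~\eqref{eq-special-rho-flat-sp}.

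Next I would estimate the $\nuhat$-measure of the ball $\Bhat(\binfty,r)$ with $r=\dhat(x,\binfty)$. The key point is that $\dhat(\,\cdot\,,\binfty)$ is, up to constants, a decreasing function of $|\cdot|$ on $Z$, by \eqref{eq-C_2-binfty}. Consequently, for suitable constants $c,C>0$ (depending only on $C_\nu$, $\ka$ and $\s$), the inclusions
\[
    \{y\in Z : |y|\ge C|x|\}
    \;\subset\;
    \Bhat(\binfty,r)\cap Z
    \;\subset\;
    \{y\in Z : |y|\ge c|x|\}
\]
hold (with the obvious interpretation when $|x|=0$). This is the step most likely to require care: one must translate the condition $\dhat(y,\binfty)<r$ via \eqref{eq-C_2-binfty} and Lemma~\ref{lem-measure-growth} (reverse-doubling) back into the comparison $m(y)\simeq m(x)$ at the two endpoints of the range. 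Once these inclusions are in place, Lemma~\ref{lem-nuhat-m(x)} applied at radii $\simeq |x|$ gives
\[
    \nuhat\bigl(\Bhat(\binfty,\dhat(x,\binfty))\bigr) \simeq m(x)^{-\s}.
\]

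Substituting both estimates into the definition \eqref{eq-special-rho-flat-sp} of $\rhohat$ yields
\[
    \rhohat(x) \simeq \frac{1}{\nu(B_{m(x)})^{-1/\s}\cdot m(x)^{-1}}
    = m(x)\,\nu(B_{m(x)})^{1/\s} \simeq \frac{1}{\rho(x)},
\]
which is exactly $\rho(x)\rhohat(x)\simeq 1$. The statement about measures then follows immediately: by construction
\[
    d\nutilde = \rhohat^\s\,d\nuhat = \rhohat^\s\rho^\s\,d\nu = (\rho\rhohat)^\s\,d\nu \simeq d\nu,
\]
with comparison constants depending only on $C_\nu$, $\ka$ and $\s$, as required.
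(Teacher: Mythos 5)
Your proposal is correct, and its first and last steps coincide with the paper's: both use Corollary~\ref{cor-d(x,infty)-m(x)} (i.e.\ \eqref{eq-C_2-binfty}) to cancel $\dhat(x,\binfty)$ against $\nu(B_{m(x)})^{1/\s}$, and both reduce the claim to showing $\nuhat(\Bhat(\binfty,\dhat(x,\binfty)))\simeq m(x)^{-\s}$. The middle step, however, is genuinely different. The paper recenters the ball: since $\dhat(x,\binfty)\simeq \nu(B_{m(x)})^{-1/\s}$, Lemma~\ref{lem-comp-close-balls} applied to the doubling measure $\nuhat$ replaces $\Bhat(\binfty,\dhat(x,\binfty))$ by $\Bhat(x,c_0\nu(B_{m(x)})^{-1/\s})$, whose measure is then computed from Lemma~\ref{lem-Bhat-both-dhat} as $\rho(x)^\s\nu(B(x,c_0m(x)))\simeq m(x)^{-\s}$. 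You instead keep the ball centered at $\binfty$ and sandwich it between sets of the form $\{y: m(y)\ge Cm(x)\}$ using \eqref{eq-C_2-binfty} and the reverse-doubling of Lemma~\ref{lem-measure-growth}, then invoke Lemma~\ref{lem-nuhat-m(x)}; this mirrors the technique used to prove Proposition~\ref{prop-nuhat-large-r} and is equally valid. Two small points of care in your route: the inclusions must be phrased with $m(y)\ge Cm(x)$ rather than $|y|\ge C|x|$, since for $|x|$ near $0$ (not only $|x|=0$) the latter versions fail, and when $Cm(x)-1\le 0$ one should fall back on $\nuhat(\Zhat)\simeq 1\simeq m(x)^{-\s}$ from Lemma~\ref{lem-nuhat-m(x)} rather than on the set inclusion; both fixes are routine and you explicitly flag that this step needs care. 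The paper's recentering buys brevity because the doubling of $\nuhat$ (Theorem~\ref{thm-doubl-m(x)}) is already available at this stage; your version is more self-contained at the level of the ball at $\binfty$, at the cost of re-deriving a special case of machinery the paper already has.
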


\begin{proof}
We have by \eqref{eq-special-rho-flat-sp},
together with Corollary~\ref{cor-d(x,infty)-m(x)},
the doubling property of $\nuhat$ 
and Lemma~\ref{lem-comp-close-balls} (for $\nuhat$), that
\begin{align}   
\frac{1}{\rho(x) \rhohat(x)}
&= m(x) \nu(B_{m(x)})^{1/\s} \dhat(x,\binfty) \nuhat (\Bhat(\binfty,\dhat(x,\binfty)))^{1/\s} 
\nonumber \\
&\simeq m(x) \nuhat \biggl( \Bhat\biggl(x,\frac{c_0}{\nu(B_{m(x)})^{1/\s}}\biggr) \biggr)^{1/\s},
\label{eq-rho-rhohat}
\end{align}
where $c_0$ is as in Lemma~\ref{lem-Bhat-both-dhat}.
Since 
\[
\frac{c_0}{\nu(B_{m(x)})^{1/\s}} = c_0 m(x)\rho(x),
\]
Lemmas~\ref{lem-Bhat-both-dhat} and
\ref{lem-comp-close-balls}  yield
\[
\nuhat \biggl( \Bhat\biggl(x,\frac{c_0}{\nu(B_{m(x)})^{1/\s}}\biggr) \biggr)
\simeq \rho(x)^\s \nu(B(x,c_0 m(x)))
\simeq \rho(x)^\s \nu(B_{m(x)}) =
m(x)^{-\s}.
\]

Inserting this into \eqref{eq-rho-rhohat} concludes the proof.
\end{proof}

\begin{thm}   \label{thm-flatt-spher}
The flattening of the sphericalized space is equivalent to the original space in the following sense:
The metrics $\dtilde$ and $d$ are equivalent and the measures $\nutilde$ and $\nu$ are comparable, that is, for all $x,y \in Z$,
\[
 \dtilde (x,y) \simeq d(x,y) \quad \text{and}   \quad d\nutilde (x) \simeq d\nu (x),
\]
where the comparison constants depend only on 
$C_\nu$, $\ka$ and $\s$.
\end{thm}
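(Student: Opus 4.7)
The proof naturally splits into a measure statement and a metric statement, and the measure comparison $d\nutilde \simeq d\nu$ is essentially immediate: since $d\nutilde = \rhohat^\s \, d\nuhat$ and $d\nuhat = \rho^\s \, d\nu$, combining these gives $d\nutilde = (\rho\rhohat)^\s \, d\nu$, and Lemma~\ref{lem-rho-rhohat-new} tells us $\rho(x)\rhohat(x) \simeq 1$. The bulk of the work is thus to establish $\dtilde(x,y) \simeq d(x,y)$ for all $x,y \in Z$; by symmetry we may assume $|x| \le |y|$.

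For the lower bound $\dtilde(x,y) \simge d(x,y)$, I would argue term-by-term along any chain $x = x_0,\ldots,x_n = y$ and show that
\[
(\rhohat(x_j) + \rhohat(x_{j-1}))\,\dhat(x_j,x_{j-1}) \simge d(x_j,x_{j-1})
\]
for every $j$, after which summation and the triangle inequality $\sum_j d(x_j,x_{j-1}) \ge d(x,y)$ finish the job. This per-step bound is a case analysis on the ratio $m(x_j)/m(x_{j-1})$. If it lies in $[\tfrac12,2]$, Lemma~\ref{lem-d(x,y)-m(x)}\ref{it-y-le-M-x} gives $\dhat(x_j,x_{j-1}) \simeq \rho(x_{j-1})\, d(x_j,x_{j-1})$, and multiplying by $\rhohat(x_{j-1}) \simeq 1/\rho(x_{j-1})$ from Lemma~\ref{lem-rho-rhohat-new} yields the claim. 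If instead the scales are incomparable, say $m(x_j) \ge 2 m(x_{j-1})$, Corollary~\ref{cor-d(x,infty)-m(x)} gives $\dhat(x_j,x_{j-1}) \simge 1/\nu(B_{m(x_{j-1})})^{1/\s}$, while $\rhohat(x_j) + \rhohat(x_{j-1}) \ge \rhohat(x_j) \simeq m(x_j)\nu(B_{m(x_j)})^{1/\s}$; their product is at least $m(x_j)$, which dominates $d(x_j,x_{j-1}) \le m(x_j) + m(x_{j-1}) \le \tfrac32 m(x_j)$.

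For the upper bound $\dtilde(x,y) \simle d(x,y)$, the trivial chain $x,y$ works only when $m(y) \le 2 m(x)$: in that regime Lemma~\ref{lem-d(x,y)-m(x)}\ref{it-y-le-M-x} together with Lemma~\ref{lem-rho-rhohat-new} give $(\rhohat(x) + \rhohat(y))\,\dhat(x,y) \simeq \rhohat(x)\rho(x)\, d(x,y) \simeq d(x,y)$. In the opposite regime $m(y) > 2m(x)$ the trivial chain overestimates $\dtilde(x,y)$ by a factor $(\nu(B_{m(y)})/\nu(B_{m(x)}))^{1/\s}$, which need not be bounded. The remedy is to insert intermediate points produced by the uniform perfectness of $Z$ at $b$ for radii $\ge 1$: build a chain $x = z_0, z_1, \ldots, z_n = y$ along which $|z_j|$ roughly doubles (prepending a point of scale $\simeq 1$ if $|x| < 1$), so that consecutive scales satisfy $m(z_j)/m(z_{j-1}) \simeq 1$ with a ratio depending only on $\ka$. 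Each step then falls into the comparable regime and contributes $\simeq d(z_j,z_{j-1})$, and the geometric growth of $|z_j|$ ensures $\sum_j d(z_j,z_{j-1}) \simle |z_n| = |y| \simle d(x,y)$.

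The main technical obstacle lies in the upper bound in the incomparable case: one must verify that the chain produced by iterated uniform perfectness can be arranged so that neighboring points have $m$-values of bounded ratio, terminates at $y$, and has total $d$-length comparable to $d(x,y)$. Once this chain construction is in place, the step estimates from Lemma~\ref{lem-d(x,y)-m(x)}, Corollary~\ref{cor-d(x,infty)-m(x)} and Lemma~\ref{lem-rho-rhohat-new} combine cleanly to yield $\dtilde(x,y) \simle d(x,y)$.
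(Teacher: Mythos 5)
Your proposal is correct in outline, but it takes a genuinely different route from the paper's. You unwind the definition of $\dtilde$ as an infimum over chains and prove the two-sided bound by hand: a per-step lower bound valid for arbitrary chains (splitting on whether the consecutive gauges $m(x_j)$ and $m(x_{j-1})$ are comparable), and an explicit chain built from uniform perfectness for the upper bound. The paper never touches chains at this stage: it observes that the flattening of $(\Zhat,\dhat,\nuhat)$ at $\binfty$ is again a transformation of the exact form studied in Sections~\ref{sect-both}--\ref{sect-with-uniform}, with gauge $\mhat(x)=\dhat(x,\binfty)$, so the two-sided estimates of Lemma~\ref{lem-d(x,y)-m(x)}\,\ref{it-y-le-M-x} and Corollary~\ref{cor-d(x,infty)-m(x)} apply directly to $\dtilde$ as well as to $\dhat$. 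Splitting on whether $\dhat(x,\binfty)$ and $\dhat(y,\binfty)$ are comparable (rather than on $m(x)$ versus $m(y)$ as you do), the paper gets $\dtilde(x,y)\simeq\rhohat(x)\dhat(x,y)\simeq\rhohat(x)\rho(x)\,d(x,y)\simeq d(x,y)$ in the comparable case, and $\dtilde(x,y)\simeq\nuhat(\Bhat(\binfty,\dhat(y,\binfty)))^{-1/\s}\simeq m(y)\simeq d(x,y)$ in the separated case, the middle step coming from Proposition~\ref{prop-nuhat-comp-r}. What your approach buys is self-containedness; what the paper's buys is brevity --- in particular, the step you flag as the main technical obstacle (the upper-bound chain with geometrically growing $|z_j|$, total length $\simle|y|\simle d(x,y)$) is exactly the construction already carried out in Lemma~\ref{lem-sari} and absorbed into Lemma~\ref{lem-dhat-le-m(x)} and Corollary~\ref{cor-d(x,infty)-m(x)}, so it need not be redone; your sketch of it is nonetheless sound. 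Both arguments rest on the same key input, $\rho\rhohat\simeq1$ from Lemma~\ref{lem-rho-rhohat-new}, both for the measure comparison and for converting $\dhat$-estimates into $d$-estimates.
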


\begin{proof} 
The comparability of the measures was already proved in Lemma~\ref{lem-rho-rhohat-new}.
For the metrics we may assume by symmetry that $\dhat(y,\binfty) \le \dhat(x,\binfty)$.

\textbf{Case 1:} $\dhat(x,\binfty) \ge 2 \dhat(y,\binfty)$.
In this case, Corollary~\ref{cor-d(x,infty)-m(x)}
(first \eqref{eq-d(x,infty)-m(x)} applied to $\dhat$ and then 
\eqref{eq-C_2-binfty} applied to $d$),
together with
Lemma~\ref{lem-comp-close-balls}, yields
\[
\dtilde(x,y)   \simeq \nuhat(\Bhat(\binfty,\dhat(y,\binfty)))^{-1/\s}
\simeq \nuhat \biggl( \Bhat\biggl(y,\frac{1}{\nu(B_{m(y)})^{1/\s}}\biggr) \biggr)^{-1/\s}
\simeq m(y),
\]
where the last comparison follows from Proposition~\ref{prop-nuhat-comp-r}
with $r=\nu(B_{m(y)})^{-1/\s}$. 

If $m(y)\ge 2m(x)$ then as in~\eqref{eq-1/2-d(x,y)-3/2},
also $\tfrac12 m(y) \le d(x,y) \le \tfrac32 m(y)$ and the result follows.
On the other hand, if $m(y)\le 2m(x)$ then
$\nu(B_{m(y)}) \simle \nu(B_{m(x)})$ by the doubling 
property of $\nu$,
and thus 
Corollary~\ref{cor-d(x,infty)-m(x)} implies that we are in the following case:

\textbf{Case 2:} $\dhat(x,\binfty) \simle \dhat(y,\binfty)\le \dhat(x,\binfty)$. 
In particular, $\mhat(x)\simeq\mhat(y)$.
It also follows from Corollary~\ref{cor-d(x,infty)-m(x)} that 
\(
\nu(B_{m(x)}) \simeq \nu(B_{m(y)})
\),
and thus $m(x) \simeq m(y)$ by Lemma~\ref{lem-measure-growth}.
Using also Lemma~\ref{lem-d(x,y)-m(x)}\ref{it-y-le-M-x} (applied first to $\dtilde$ and then to $\dhat$) and Lemma~\ref{lem-rho-rhohat-new} we get that
\[
\dtilde(x,y) \simeq  \rhohat(x) \dhat(x,y) \simeq \rhohat(x) \rho(x) d(x,y) 
\simeq d(x,y).\qedhere
\]
\end{proof}

\subsection{The sphericalization of a flattened space}
\label{sect-sp-flatt}

\emph{In this section we assume that $(Z, d)$ is a bounded complete metric space 
equipped  with a doubling measure $\nu$ with a doubling constant $C_{\nu}$. 
We also assume that $Z$ is uniformly perfect at a base point $b$ with constant $\ka>1$. 
}

\medskip

We denote the flattening of this space by $(Z', \dhat, \nuhat)$ as defined 
in Section~\ref{sect-both} with the gauge function $m(x)=|x|$
and $Z'=Z\setm\{b\}$.
By Lemma~\ref{lem-Zhat-bdd}\ref{Z-b} the space $(Z',\dhat)$ is unbounded, 
and by Proposition~\ref{prop-unbounded-complete} it  
is complete, so $\Zhat = Z'$. 
Theorem~\ref{thm-doubl-m(x)} shows that the measure $\nuhat$ is 
doubling on $(\Zhat,\dhat)$ with a doubling constant 
depending only on $C_\nu$, $\ka$ and $\s$.

Note that $\rho$ is doubling and the 
doubling constant $A$ for $\rho$
in~\eqref{eq-doubl-rho} can be chosen so that $A\le 2C_\nu^{1/\s}$. 
Therefore by Proposition~\ref{prop-unif-at-b'}, the space $(\Zhat,\dhat)$ 
is uniformly perfect at large scales at 
a base point $\bhat \in \Zhat$
with a constant $\kahat > 1$ that depends only on 
$C_{\nu}$, $\ka$, $\s$ and $|\bhat| \rho(\bhat) = \nu(B_{\bhat})^{-1/\s}$, where we let 
\[
B_z=B(b,|z|) 
\quad \text{for } z \in Z
\]
in this section.

Then we sphericalize the space $(\Zhat, \dhat, \nuhat)$ 
at the base point $\bhat$, as defined in Section~\ref{sect-both} 
with the function $\mhat(x)=\dhat(x,\bhat)+1$.
After sphericalization, we get a new space, which we denote by $(\Zt, \dtilde, \nutilde)$.
The act of sphericalization adds one point to $\Zhat$, which is denoted $\binfty$. 
It follows from Case~3 in the proof of Theorem~\ref{thm-spher-flatt} below 
that 
we can
identify $\binfty$ with the point $b$ that was removed in the flattening.
Thus $\Zt= (Z \setminus \{b\} )\cup \{\binfty\} = Z$.

Denote by $\rho$ the metric density function used in the flattening and by $\rhohat$ the metric density function used in the sphericalization, i.e.\
\begin{equation*} 
 \rho(x)=  \frac{1}{|x| \nu(B_x)^{1/\s}}
\quad \text{and} \quad
 \rhohat(x)
= \frac{1}{\mhat(x) \nuhat(\Bhat(\bhat,\mhat(x)))^{1/\s}}.
\end{equation*}
Recall that 
\[
    R_\infty := \sup_{x \in Z} |x| <\infty.
\]

\begin{lem} \label{lem-rho-rhohat-2-new}
Let $x \in Z\setm\{b\}$. 
Then $\rho(x) \rhohat(x) \simeq 1$.
In particular, we have $d \nutilde(x) \simeq d \nu(x)$ for every 
$x \in Z\setm\{b\}$.
The comparison constants depend only on $C_{\nu}$, $\ka$, $\s$, $R_\infty$, $\nu(Z)$ and $|\bhat|$.
\end{lem}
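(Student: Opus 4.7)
The goal is to show that the reciprocal
\[
\frac{1}{\rho(x)\rhohat(x)} = |x|\,\nu(B_x)^{1/\s}\,\mhat(x)\,\nuhat(\Bhat(\bhat,\mhat(x)))^{1/\s}
\]
is comparable to $1$, with comparison constants depending only on $C_\nu$, $\ka$, $\s$, $R_\infty$, $\nu(Z)$ and $|\bhat|$. The strategy mirrors that of Lemma~\ref{lem-rho-rhohat-new}, but with one essential new difficulty: $\bhat$ is an interior point of $\Zhat$, not the canonical point added at infinity, so $\dhat(x,\bhat)$ cannot be estimated by a single uniform formula and several regimes must be treated separately.

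My first step is to replace the centre $\bhat$ by $x$ in the outer ball. Since $\dhat(x,\bhat) < \mhat(x)$, the triangle inequality gives
\[
\Bhat(\bhat,\mhat(x)) \subset \Bhat(x,2\mhat(x))
\quad \text{and} \quad
\Bhat(x,\mhat(x)) \subset \Bhat(\bhat,2\mhat(x)),
\]
so the doubling property of $\nuhat$ on $(\Zhat,\dhat)$ (Theorem~\ref{thm-doubl-m(x)}) yields $\nuhat(\Bhat(\bhat,\mhat(x))) \simeq \nuhat(\Bhat(x,\mhat(x)))$. The task is thus reduced to estimating $\nuhat(\Bhat(x,\mhat(x)))$ in terms of $x$ alone.

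The main case is $|x| \le \tfrac12|\bhat|$ together with $\nu(B_x) \le 1$. Here Corollary~\ref{cor-d(x,infty)-m(x)} applied to the flattening $(\Zhat,\dhat,\nuhat)$ (where $m_0=0$ and $m(x)=|x|$) gives $\dhat(x,\bhat) \simeq \nu(B_x)^{-1/\s}$, and since $\nu(B_x)^{-1/\s} \ge 1$, also $\mhat(x) \simeq \nu(B_x)^{-1/\s}$. Proposition~\ref{prop-nuhat-comp-r} applied with $r=\mhat(x)$ then yields $\nuhat(\Bhat(x,\mhat(x))) \simeq |x|^{-\s}$, and substituting back, the four factors $|x|$, $\nu(B_x)^{1/\s}$, $\nu(B_x)^{-1/\s}$ and $|x|^{-1}$ telescope to give the required $\simeq 1$ with universal constants.

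All other configurations -- the residual subcase $|x|\le\tfrac12|\bhat|$ with $\nu(B_x)>1$, which by Lemma~\ref{lem-measure-growth} forces $|x|$ into a bounded interval, together with the two cases $\tfrac12|\bhat|\le|x|\le2|\bhat|$ and $|x|\ge2|\bhat|$ -- place $|x|$, $\nu(B_x)$ and $\mhat(x)$ in ranges bounded above and below by positive constants depending only on the permitted data. In each of these cases I would bound $\nuhat(\Bhat(\bhat,\mhat(x)))$ from below by $\nuhat(\Bhat(\bhat,1))>0$ (using Lemma~\ref{lem-Bhat-both-dhat}) and from above by applying whichever of Lemma~\ref{lem-Bhat-both-dhat}, Proposition~\ref{prop-nuhat-comp-r} or Proposition~\ref{prop-nuhat-large-r} fits the size of $\mhat(x)$ relative to $\nu(B_{\bhat})^{-1/\s}$. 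The principal obstacle is exactly this bookkeeping: matching the threshold constants $c_0$, $C_0$, $C'$, $c_2$, $C_2$ arising in those three statements and in Corollary~\ref{cor-d(x,infty)-m(x)}, so that the intermediate regimes are covered without gaps. The last assertion $d\nutilde(x)\simeq d\nu(x)$ then follows immediately from $d\nutilde = \rhohat^{\s}\,d\nuhat = (\rho\rhohat)^{\s}\,d\nu$.
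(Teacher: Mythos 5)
Your proposal is correct and follows essentially the same route as the paper: split on whether $|x|\le\tfrac12|\bhat|$, use Corollary~\ref{cor-d(x,infty)-m(x)} to get $\mhat(x)\simeq\dhat(x,\bhat)\simeq\nu(B_x)^{-1/\s}$ in the main regime, recentre the ball at $x$ via Lemma~\ref{lem-comp-close-balls}, and apply Proposition~\ref{prop-nuhat-comp-r} so that the four factors telescope. The only (harmless) deviation is your extra subcase $\nu(B_x)>1$, which the paper avoids by noting $\nu(B_x)\le\nu(B_{\bhat})\simeq1$ whenever $|x|\le\tfrac12|\bhat|$, so $\mhat(x)\simeq\nu(B_x)^{-1/\s}$ holds throughout that case; your remaining "bounded range" cases match the paper's Case~1, where all four factors are $\simeq1$.
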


\begin{proof}
In this case we have
\begin{equation}   \label{eq-prod-rho-rhohat}
\frac{1}{\rho(x) \rhohat(x)}
= |x| \nu(B_x)^{1/\s} \mhat(x) \nuhat(\Bhat(\bhat,\mhat(x)))^{1/\s},
\end{equation}
where $\mhat(x)=\dhat(x,\bhat)+1$.
Note that $|\bhat|\simeq 1$ 
and $\nu(B_{\bhat})\simeq 1$
(since the comparison constants are allowed to depend on 
$C_{\nu}$, $R_\infty$, $\nu(Z)$ and $|\bhat|$).

If $|x|\ge \tfrac12 |\bhat|$ then also $|x|\simeq 1$. 
Moreover, Lemma~\ref{lem-d(x,y)-m(x)}\ref{it-y-le-M-x} implies that
\[
\dhat(x,\bhat) \simeq \rho(\bhat) d(x,\bhat) \simeq d(x,\bhat) 
\le d(x,b) + d(b,\bhat)
\le 2 R_\infty.
\]
Hence 
\[
\mhat(x) = \dhat(x,\bhat)+1 \simeq 1
\quad \text{and consequently} \quad
\nuhat(\Bhat(\bhat,\mhat(x))) \simeq 1,
\]
where we also used Lemma~\ref{lem-Bhat-both-dhat}.
Thus all the four factors in~\eqref{eq-prod-rho-rhohat} are $\simeq1$, 
which gives the desired estimate.

On the other hand, if $|x|\le \tfrac12|\bhat|$ then 
Corollary~\ref{cor-d(x,infty)-m(x)} with $y=\bhat$
gives that 
\begin{equation*}  
\dhat(x,\bhat)\simeq 
\frac{1}{\nu(B_x)^{1/\s}} \ge \frac{1}{\nu(B_{\bhat})^{1/\s}}
\simeq 1
\end{equation*} 
and hence
\[
\mhat(x) = \dhat(x,\bhat)+1 \simeq \dhat(x,\bhat) \simeq 
\frac{1}{\nu(B_x)^{1/\s}}.
\]
Thus, the two middle factors in \eqref{eq-prod-rho-rhohat} cancel. 
Then Lemma~\ref{lem-comp-close-balls} 
and Proposition~\ref{prop-nuhat-comp-r} with $r=\mhat(x)$ and $m(x)=|x|$ yield that
\[
\nuhat(\Bhat(\bhat,\mhat(x)))^{1/\s} 
\simeq \nuhat(\Bhat(x,\mhat(x)))^{1/\s}
\simeq (|x|^{-\s})^{1/\s} = |x|^{-1}.
\]
Inserting this into \eqref{eq-prod-rho-rhohat} concludes the proof.
\end{proof}

\begin{thm} \label{thm-spher-flatt}
The sphericalization of the flattened space is equivalent to the original 
space in the following sense:
The metrics $\dtilde$ and $d$ are equivalent and the measures $\nutilde$ and $\nu$ 
are comparable, that is, for all $x,y \in Z$,
\[
 \dtilde (x,y) \simeq d(x,y) \quad \text{and}   \quad d\nutilde (x) \simeq d\nu (x),
\]
where the comparison constants depend only on $C_{\nu}$, $\ka$, $\s$, $R_\infty$, $\nu(Z)$ and $|\bhat|$.
\end{thm}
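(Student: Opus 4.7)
The measure comparability has already been established in Lemma~\ref{lem-rho-rhohat-2-new}, so the remaining task is the metric equivalence $\dtilde(x,y)\simeq d(x,y)$, together with the identification of the ``point at infinity'' $\binfty\in\Zt$ added during sphericalization with the base point $b$ removed during flattening, so that $\Zt=Z$. The strategy follows the same two-case pattern as in the proof of Theorem~\ref{thm-flatt-spher}, with the roles of flattening and sphericalization interchanged. The key preliminary fact I would establish is
\[
\mhat(y)=\dhat(y,\bhat)+1 \simeq \nu(B_y)^{-1/\s} \qquad \text{for every } y\in Z\setm\{b\},
\]
which converts the sphericalization data back into data of the original flattened space. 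For $|y|\le\tfrac12|\bhat|$, this follows from Corollary~\ref{cor-d(x,infty)-m(x)} (applied in the flattening) together with the observation that both sides are then bounded below by constants; for $|y|\ge\tfrac12|\bhat|$, doubling of $\nu$ gives $\nu(B_y)\simeq 1$ while $\dhat(y,\bhat)\simle 1$ by Lemma~\ref{lem-dhat-le-m(x)}.

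By symmetry, assume $\mhat(x)\ge\mhat(y)$. In Case~1, where $\mhat(x)\ge M\mhat(y)$ for a sufficiently large constant $M$, I apply Corollary~\ref{cor-d(x,infty)-m(x)} to the sphericalization (with $\mhat$ as gauge) to get $\dtilde(x,y)\simeq\nuhat(\Bhat(\bhat,\mhat(y)))^{-1/\s}$. Lemma~\ref{lem-comp-close-balls} then re-centers the ball at $y$, and Proposition~\ref{prop-nuhat-comp-r} (with $r=\mhat(y)\simeq\nu(B_y)^{-1/\s}$) yields $\nuhat(\Bhat(y,\mhat(y)))\simeq|y|^{-\s}$. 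Hence $\dtilde(x,y)\simeq|y|$. Using the preliminary estimate, the inequality $\mhat(x)\ge M\mhat(y)$ translates into $\nu(B_x)\le CM^{-\s}\nu(B_y)$, so that by the reverse-doubling estimate of Lemma~\ref{lem-measure-growth}, $|x|/|y|$ can be made arbitrarily small by choosing $M$ large enough; therefore $d(x,y)\simeq|y|$. In Case~2, where $\mhat(x)\simeq\mhat(y)$, the preliminary estimate and Lemma~\ref{lem-measure-growth} give $|x|\simeq|y|$, and two applications of Lemma~\ref{lem-d(x,y)-m(x)}\ref{it-y-le-M-x} (first to the sphericalization, then to the flattening) combined with Lemma~\ref{lem-rho-rhohat-2-new} yield
\[
\dtilde(x,y) \simeq \rhohat(x)\dhat(x,y) \simeq \rhohat(x)\rho(x)\,d(x,y) \simeq d(x,y).
\]
Case~3 handles $x=b$: by Proposition~\ref{prop-unbounded-complete}, a sequence $\{x_j\}\subset Z\setm\{b\}$ satisfies $|x_j|\to 0$ if and only if it is $\dhat$-unbounded; such sequences $\dtilde$-converge to the unique added point $\binfty$ of the sphericalization, which justifies identifying $\binfty$ with $b$, and passing to the limit $\mhat(x)\to\infty$ in Case~1 gives $\dtilde(b,y)\simeq|y|=d(b,y)$.

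The main technical hurdle is the preliminary identification $\mhat(y)\simeq\nu(B_y)^{-1/\s}$, without which the sphericalization computations would be divorced from the original flattening quantities. A secondary subtlety is ensuring that the constant $M$ in Case~1 is chosen large enough (depending only on the structural constants) to force $|x|\ll|y|$ via Lemma~\ref{lem-measure-growth}; the identification $\binfty=b$ is conceptually important but follows directly from Proposition~\ref{prop-unbounded-complete}.
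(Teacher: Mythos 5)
Your proposal is essentially the paper's proof, reorganized: the paper splits into cases by the radii ($|x|\le a|y|$ versus $a|y|\le|x|\le|y|$) and only afterwards deduces the gauge separation $\mhat(x)\ge 2\mhat(y)$, whereas you split by the gauge and deduce the radius separation; both partitions cover all pairs and lead to the same computations. Your front-loaded estimate $\mhat(y)\simeq\nu(B_y)^{-1/\s}$ is a nice touch --- it absorbs the paper's repeated subcase distinction $|y|\lessgtr\tfrac12|\bhat|$ into a single preliminary step, and your Case~1 then runs through Corollary~\ref{cor-d(x,infty)-m(x)}, Lemma~\ref{lem-comp-close-balls} and Proposition~\ref{prop-nuhat-comp-r} exactly as the paper does in its subcase $|y|\le\tfrac12|\bhat|$. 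Cases~2 and~3 coincide with the paper's.

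One citation in Case~1 is wrong, though the claim it supports is true. To pass from $\nu(B_x)\le CM^{-\s}\nu(B_y)$ to ``$|x|/|y|$ small'' you need a \emph{lower} bound on $\nu(B_r)/\nu(B_R)$ in terms of $r/R$, i.e.\ the iterated doubling inequality $\nu(B_R)\simle (R/r)^{\log_2 C_\nu}\,\nu(B_r)$; Lemma~\ref{lem-measure-growth} gives the \emph{upper} bound $\nu(B_r)/\nu(B_R)\le\La(r/R)^\al$, which points in the wrong direction for this deduction (a small measure ratio is consistent, under that inequality alone, with a large radius ratio). By contrast, in your Case~2 the use of Lemma~\ref{lem-measure-growth} is correct: there you go from a measure ratio bounded \emph{below} to a radius ratio bounded below. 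So replace the appeal to reverse doubling in Case~1 by plain doubling of $\nu$ and the argument is complete.
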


\begin{proof}
Since $\nutilde(\{\binfty\})=\nu(\{b\})=0$ by Lemma~\ref{lem-measure-growth}, the comparability of the measures follows from Lemma~\ref{lem-rho-rhohat-2-new}.
For the metrics we may assume by symmetry that $|x| \leq |y|$ and $y \neq b$.
Let $a>0$ be a constant, to be chosen later, such that 
$a \le \tfrac12 |\bhat|/R_\infty \le\tfrac12$. 

\textbf{Case 1:} 
$0 < |x| \leq a |y|$.
By Lemma~\ref{lem-measure-growth},
\begin{equation}         \label{eq-nu-x-le-a-nu-y}
\nu(B_x) \leq \Lambda a^{\al} \nu(B_y) \leq \Lambda a^{\al} \nu(Z).
\end{equation} 
Moreover, $|x| \leq a |y| \leq a R_\infty \leq \frac{1}{2} |\bhat|$
and hence by Corollary~\ref{cor-d(x,infty)-m(x)} with $M=2$,
\begin{equation}         \label{eq-22}
\dhat(x,\bhat) \simeq \frac{1}{\nu(B_x)^{1/\s}} \simge a^{- \al / \s},
\end{equation}  
where by \eqref{eq-nu-x-le-a-nu-y} the comparison constants do not depend on $a$.
Since $|y| \le R_\infty \simle |\bhat|$, Lemma~\ref{lem-d(x,y)-le-iff-m(x)},
 \eqref{eq-nu-x-le-a-nu-y}  and~\eqref{eq-22} yield that
\begin{equation}      \label{eq-est-dhat(y,btilde)}
\dhat(y,\bhat) \simle\frac{1}{\nu(B_y)^{1/\s}}
\simle \frac{a^{\al / \s}}{\nu(B_x)^{1/\s}} \simeq a^{\al / \s} \dhat(x,\bhat),
\end{equation}
where the comparison constants do not depend on $a$.
Thus by choosing $a\le \tfrac12 |\bhat|/R_\infty$ 
small enough, 
we see that it follows from \eqref{eq-22} and \eqref{eq-est-dhat(y,btilde)} that
$|x| \leq a |y|$ implies that 
\begin{equation}  \label{eq-mhat-after-a}
\mhat(x) = \dhat(x,\bhat) +1 \geq 2 (\dhat(y,\bhat) + 1) = 2\mhat(y) 
\quad \text{and} \quad 
d(x,y) \simeq |y|.
\end{equation}
For such $x$ and $y$, we then get from Corollary~\ref{cor-d(x,infty)-m(x)} 
and Lemma~\ref{lem-comp-close-balls} that
\begin{equation}      \label{eq-dhathat-nuhat}
\dtilde(x,y) \simeq \frac{1}{\nuhat(\Bhat(\bhat,\mhat(y)))^{1/\s}}
\simeq \frac{1}{\nuhat(\Bhat(y,\mhat(y)))^{1/\s}}.
\end{equation}

If $|y| \geq \frac{1}{2} |\bhat|$, then $\nu(B_y) \simeq 1$ 
and therefore
by \eqref{eq-est-dhat(y,btilde)},
\[
1 \le \mhat(y) = 1 + \dhat(y,\bhat) \simle 1 + \frac{1}{\nu(B_{y})^{1/\s}}
\simeq 1,
\]
and hence by 
the first comparison in~\eqref{eq-dhathat-nuhat} and 
Lemma~\ref{lem-Bhat-both-dhat},
\[
 \dtilde(x,y) \simeq \frac{1}{\nuhat(\Bhat(\bhat,1))^{1/\s}}
 \simeq 1 \simeq |y| \simeq d(x,y).
\]
On the other hand, if $|y| \leq \frac{1}{2} |\bhat|$ then by Corollary~\ref{cor-d(x,infty)-m(x)},
\[
\mhat(y) = 1+ \dhat(y,\bhat) \simeq 1 + \frac{1}{\nu(B_y)^{1/\s}} \simeq \frac{1}{\nu(B_y)^{1/\s}}
\] 
and therefore by \eqref{eq-dhathat-nuhat},  
Proposition~\ref{prop-nuhat-comp-r} with $r=\mhat(y)$ and \eqref{eq-mhat-after-a},
\[
\dtilde(x,y) \simeq \frac{1}{\nuhat(\Bhat(y,\mhat(y)))^{1/\s}} \simeq |y|  \simeq d(x,y).
\]

\textbf{Case 2:} 
$a |y| \le |x| \le |y|$.  
If $|x| \le |y|\leq \frac{1}{2} |\bhat|$, then by Corollary~\ref{cor-d(x,infty)-m(x)},
\[
\dhat(x,\bhat) \simeq \frac{1}{\nu(B_x)^{1/\s}}
 \simeq \frac{1}{\nu(B_y)^{1/\s}} \simeq \dhat(y,\bhat).
\]
On the other hand if $\frac{1}{2} |\bhat| \le |y| \le |x|/a$, 
then we get from 
Lemma~\ref{lem-d(x,y)-le-iff-m(x)} that 
\[
1 \le \mhat(y) = 1 + \dhat(y,\bhat) \simle 1 + \frac{1}{\nu(B_{\bhat})^{1/\s}} \simle 1
\]
and similarly, $\mhat(x) \simeq 1$. 
In both cases, we have $\mhat(x) \simeq \mhat(y)$ and hence by 
Lemma~\ref{lem-d(x,y)-m(x)}\ref{it-y-le-M-x}, applied to both $\dhat$ 
and $d$, together with Lemma~\ref{lem-rho-rhohat-2-new},
\[
\dtilde(x,y) \simeq \rhohat(x) \dhat(x,y) \simeq \rhohat(x) \rho(x)d(x,y) 
\simeq d(x,y).
\]

\textbf{Case 3:}
$x=b$.
Let $\{x_j\}_{j=1}^\infty$ be a sequence in $Z \setminus \{b\}$ such that $d(x_j,b) \to 0$ 
(it exists because $Z$ is uniformly perfect at $b$).
Then $\{x_j\}_{j=1}^\infty$ is a $d$-Cauchy sequence,
and hence by the above Cases~1 and~2 also a $\dtilde$-Cauchy sequence.
Since it cannot $\dtilde$-converge to any $z\in \Zhat = Z\setm\{b\}$, 
it must $\dtilde$-converge to $\binfty$.
Thus we can identify $\binfty$ with $b$.
The comparison $\dtilde(x,y) \simeq d(x,y)$ is then extended to $x=b=\binfty$ 
by continuity.
\end{proof}



\begin{thebibliography}{99}


\bibitem{BaloghBuckley} \art{\auth{Balogh}{Z} \AND \auth{Buckley}{S}}
{Sphericalization and flattening}
{Conform. Geom. Dyn.}{9}{2005}{76--101}

\bibitem{BBbook} \book{\auth{Bj\"orn}{A} \AND \auth{Bj\"orn}{J}}
        {\it Nonlinear Potential Theory on Metric Spaces}
    {EMS Tracts in Mathematics {\bf 17},
        European Math. Soc., Z\"urich, 2011}

\bibitem{BBnoncomp}  \art{\auth{Bj\"orn}{A} \AND \auth{Bj\"orn}{J}}	
        {Poincar\'e inequalities and Newtonian Sobolev functions on noncomplete 
        metric spaces}
        {J. Differential Equations} {266} {2019} {44--69}
	Corrigendum: \emph{ibid.}  {\bf 285} (2021),  493--495.

\bibitem{BBbring} \art{\auth{Bj\"orn}{A} \AND \auth{Bj\"orn}{J}}
        {Sharp Besov capacity estimates for annuli in metric spaces
with doubling measures}
       {Math. Z.} {305} {2023} {Paper No. 41, 26 pp}

\bibitem{BBLi} \art{\auth{Bj\"orn}{A}, \auth{Bj\"orn}{J}  \AND \auth{Li}{X}}
        {Sphericalization and \p-harmonic functions on unbounded
          domains in Ahlfors regular spaces}
        {J. Math. Anal. Appl.}{474}{2019}{852--875}

\bibitem{BBShyptrace} \art{Bj\"orn, A., Bj\"orn, J. \AND Shan\-mu\-ga\-lin\-gam, N.}
  {Extension and trace results for doubling metric measure spaces and their
    hyperbolic fillings}
 {J. Math. Pures Appl.} {159} {2022} {196--249}

\bibitem{BonkKleiner02} \art{\auth{Bonk}{M} \AND \auth{Kleiner}{B}}
        {Rigidity for quasi-M\"obius group actions}
        {J. Differential Geom.} {61}{2002} {81--106} 

\bibitem{BuckleyHerronXie} \art{\auth{Buckley}{S. M}, \auth{Herron}{D}
    \AND \auth{Xie}{X}}
  {Metric space inversions, quasihyperbolic distance, and uniform spaces}
  {Indiana Univ. Math. J.} {57} {2008} {837--890}

\bibitem{ChenKumagai08} \art{\auth{Chen}{Z.-Q} \AND \auth{Kumagai}{T}}
   {Heat kernel estimates for jump processes of mixed types on metric measure spaces}
  {Probability Theory Related Fields} {140} {2008} {277--317}

\bibitem{DL1} \art{\auth{Durand-Cartagena}{E} \AND \auth{Li}{X}}
      {Preservation of \p-Poincar\'e inequality for large $p$ 
     under sphericalization and flattening}
      {Illinois J. Math.} {59} {2015} {1043--1069} 

\bibitem{DL2} \art{\auth{Durand-Cartagena}{E} \AND \auth{Li}{X}}
      {Preservation of bounded geometry under sphericalization and 
    flattening: quasiconvexity and $\infty$-Poincar\'e inequality}
  {Ann. Acad. Sci. Fenn. Math.} {42} {2017} {303--324}

\bibitem{GibaraKorteShan} \art{Gibara, R., Korte, R. \AND Shanmugalingam, N.}
{Solving a Dirichlet problem on unbounded domains via a conformal transformation}
{Math. Ann.}{389}{2024}{2857--2901}

\bibitem{GibaraShan} \art{\auth{Gibara}{R} \AND \auth{Shanmugalingam}{N}}
{Conformal transformation of uniform domains under weights that depend on distance to the boundary}
{Anal. Geom. Metr. Spaces.}{10}{2022}{297--312}


\bibitem{GKS} \art{Gogatishvili, A.,  Koskela, P. \AND Shan\-mu\-ga\-lin\-gam, N.}
 {Interpolation properties of Besov spaces defined on metric spaces}
  {Math. Nachr.}{283}{2010}{215--231}

\bibitem{GKZ} \art{Gogatishvili, A.,  Koskela, P. \AND Zhou, Y.}
  {Characterizations of Besov and Triebel--Lizorkin spaces on metric measure spaces}
  {Forum Math.} {25}{2013} {787--819}

\bibitem{Heinonen} \book{Heinonen, J.}
        {Lectures on Analysis on Metric Spaces}
        {Springer, New York, 2001}

\bibitem{JW84} \book{Jonsson, A. \AND Wallin, H.}
         {Function Spaces on Subsets of\/ $\mathbf{R}^n$}
         {Math. Rep. {\bf 2}:1, Harwood, London, 1984}
         
\bibitem{KajShi} 
 \artprep{\auth{Kajino}{N}   \AND  \auth{Shimizu}{R}} 
             {Contraction properties and differentiability of \p-energy forms with applications 
             to nonlinear potential theory on self-similar sets}
     {\emph{Preprint}, 2024}
\arXiv{2404.13668v2}

         
\bibitem{KajShiToh} 
  \arttoappearin{\auth{Kajino}{N}   \AND  \auth{Shimizu}{R}} 
             {Korevaar--Schoen \p-energy forms and associated energy measures on fractals}
            {\emph{Facets of Contemporary Analysis, Geometry and 
    Non-Euclidean Statistics}, Tohoku Series in Mathematical Sciences {\bf 1},
Springer, Singapore, 2026}        

         
\bibitem{KSS25}  
  \art{\auth{Kumagai}{T}, \auth{Shanmugalingam}{N} \AND \auth{Shimizu}{R}}
             {Finite dimensionality of Besov spaces and potential-theoretic decomposition of metric spaces}
             {Ann. Fenn. Math.}{50}{2025}{347--369}


\bibitem{LiShan} \art{\auth{Li}{X} \AND \auth{Shanmugalingam}{N}}
        {Preservation of bounded geometry under sphericalization and  flattening}
        {Indiana Univ. Math. J.}{64}{2015}{1303--1341}

\bibitem{MalyBesov} \artprep{\auth{Mal\'y}{L}}
  {Trace and extension theorems for Sobolev-type functions in metric spaces}
  {\emph{Preprint}, 2017}
  \arXiv{1704.06344}
  
  
\bibitem{MurShi}   \art{\auth{Murugan}{M} \AND \auth{Shimizu}{R}}
             {First-order Sobolev spaces, self-similar energies and energy measures on the Sierpi\'nski carpet}
             {Comm. Pure Appl. Math.}  {78} {2025} {1523--1608}


\bibitem{KasiaP2010}   \art{\auth{Pietruska-Pa\l uba}{K}}
        {Heat kernel characterisation of Besov--Lipschitz spaces on metric measure spaces}
        {Manuscripta Math.}{131}{2010}{199--214}

\bibitem{SF1990}   \art{\auth{Saloff-Coste}{L}}
        {Analyse sur les groupes de Lie \'a croissance polyn\^omiale}
        {Ark. Mat.}{28}{1990}{315--331}

\bibitem{Shimizu}   \art{\auth{Shimizu}{R}}
        {Construction of \p-energy and associated energy measures on Sierpi\'nski carpets}
        {Trans. Amer. Math. Soc.}{377}{2024}{951--1032}


\bibitem{Wildrick} \art{\auth{Wildrick}{K}}
   {Quasisymmetric parametrizations of two-dimensional metric planes}
   {Proc. Lond. Math. Soc.} {97} {2008} {783--812}

\bibitem{Yang2003}  \art{\auth{Yang}{D}}
        {New characterizations of Haj\l asz--Sobolev spaces on metric spaces}
        {Sci. China Ser.\ A}{46}{2003}{675--689}

\end{thebibliography}
\end{document}